\newcommand{\Rmnum}[1]{\expandafter\@slowromancap\romannumeral#1@}
\numberwithin{equation}{section}
\newtheorem{theorem}{Theorem}[section]
\newtheorem{remark}{Remark}[section]
\begin{document}
\begin{frontmatter}
\title{Two novel classes of arbitrary high-order structure-preserving algorithms for canonical Hamiltonian systems\vspace{-1mm}}
\author{Yonghui Bo, Wenjun Cai, Yushun Wang$^*$}
\address{Jiangsu Key Laboratory for NSLSCS, Jiangsu Collaborative Innovation Center of Biomedial Functional Materials,\\ School of Mathematical Sciences, Nanjing Normal University, Nanjing 210023, China\vspace{-9mm}}

\begin{abstract}

In this paper, we systematically construct two classes of structure-preserving schemes with arbitrary order of accuracy for canonical Hamiltonian systems. The one class is the symplectic scheme, which contains two new families of parameterized symplectic schemes that are derived by basing on the generating function method and the symmetric composition method, respectively. Each member in these schemes is symplectic for any fixed parameter. A more general form of generating functions is introduced, which generalizes the three classical generating functions that are widely used to construct symplectic algorithms. The other class is a novel family of energy and quadratic invariants preserving schemes, which is devised by adjusting the parameter in parameterized symplectic schemes to guarantee energy conservation at each time step. The existence of the solutions of these schemes is verified. Numerical experiments demonstrate the theoretical analysis and conservation of the proposed schemes.
\end{abstract}
\begin{keyword}
Hamiltonian systems, symplectic schemes, energy-preserving schemes, EQUIP schemes, generating function methods, symmetric composition methods
\end{keyword}
\end{frontmatter}

\begin{figure}[b]
\small \baselineskip=10pt
\rule[2mm]{1.8cm}{0.2mm} \par
$^{*}$Corresponding author.\\
E-mail address: wangyushun@njnu.edu.cn (Y. Wang).\\
\end{figure}
\vspace{-3mm}
\section{Introduction}
All real physical processes with negligible dissipation could be recast into the appropriate Hamiltonian form \cite{feng-85-difference}. The canonical Hamiltonian system with $d$ degrees of freedom has the following form
\begin{equation}\label{eq-1-1}
\dot{p} = -H_q(p,q),\quad \dot{q} = H_p(p,q),\quad p,q\in \mathcal{R}^d,
\end{equation}
where $H_p$ and $H_q$ denote the column vectors of partial derivatives of the Hamiltonian. And the Hamiltonian mostly represents the physical meaning of the total energy. The system \eqref{eq-1-1} serving as the basic mathematical formalism often appears in the relevant areas of analytical dynamics, geometrical optics, particle accelerators, plasma physics, control theory and hydrodynamics, etc. The most characteristic property of \eqref{eq-1-1} is the symplecticity and energy conservation along any exact flows \cite{feng-10-symplectic,hairer-06-geometric,sanzserna-94-Hamiltonian,luigi-16-line}. These facts motivate to search for numerical methods that can preserve one or more conserved properties. 

Numerical methods conserving the symplecticity are called symplectic integrators and generally have excellent behavior and stability in the long-term simulations \cite{hairer-06-geometric,sanzserna-94-Hamiltonian,mclachlan-11-stability-PRK}. Symplectic integrators were first introduced in the pioneering work \cite{vogelaere-56-contact} and developed in the early works \cite{ruth-83-canonical,menyuk-84-Hamiltonian,feng-85-difference,sanzserna-88-RK,qin-92-composition,marsden-98-multisymplectic}. Feng \cite{feng-85-difference} has obtained symplectic schemes by verifying that the one-step mapping of a numerical method is symplectic. There are several approaches to construct symplectic methods, such as symplectic Runge-Kutta (RK) methods \cite{sanzserna-88-RK,cooper-20-RK}, symplectic partitioned RK methods \cite{sun-93-SPRK,suris-90-RK-variational}, variational integrators \cite{marsden-98-multisymplectic} and the generating function method \cite{feng-89-generating}. The generating function method as a general framework has significance in both the theory and the construction of symplectic algorithms.

Using the generating function method and symmetric composition methods \cite{qin-92-composition}, we first develop two types of arbitrary high-order parameterized symplectic schemes. The free parameter in these schemes will be adjusted later to derive energy and quadratic invariants preserving (EQUIP) schemes \cite{luigi-12-EQUIP}. The complete theory of the generating function method has been established in \cite{feng-89-generating} as well as a series of applications \cite{hong-08-generating-MSRK,shang-94-volume,wang-14-stochastic}. The generating function is a solution of the Hamilton-Jacobi equation that can generate arbitrary symplectic mappings. It is possible to find such a function to generate a given symplectic transformation. Actually, for symplectic RK and partitioned RK methods, Hairer et al. \cite{hairer-06-geometric} have defined the expression of their generating functions. However, the construction of generating functions is dependent on the different choices of coordinates. In this paper, we introduce a generating function with new coordinates and discuss the associated Hamilton-Jacobi equation. The new coordinates unify and generalize the traditional three generating functions. Following the research route \cite{feng-89-generating,hairer-06-geometric}, we obtain the power series of the new generating function. Then, the first type of parameterized symplectic schemes is reported by truncating this series. In particular, a simple symplectic scheme with a free parameter $\lambda$ is obtained as
\begin{align}\label{eq-1-2}
\left\{\aligned
&p_{n+1} = p_n-hH_q\big(\lambda p_{n+1}+(1-\lambda)p_n,\lambda q_n+(1-\lambda)q_{n+1}\big),\\
&q_{n+1} = q_n+hH_p\big(\lambda p_{n+1}+(1-\lambda)p_n,\lambda q_n+(1-\lambda)q_{n+1}\big),\\
\endaligned\right.
\end{align}
where $h$ is the time step. We call the above scheme Scheme \Rmnum{1}. For the case where $\lambda=0$, $1$, $\frac{1}{2}$, respectively, the symplectic Euler methods and the implicit midpoint rule are recognized. Noting that the correct pairing of variables of the Hamiltonian plays a key role in the symplecticity of Scheme \Rmnum{1}. Substituting $1-\lambda$ for $\lambda$ yields its adjoint method, and the resulting method is still symplectic. By fixing the parameter to specific values, we can obtain more symplectic schemes with simple forms that can be widely applied in practical computations. The second type of parameterized symplectic schemes is derived by the composition of Scheme \Rmnum{1} and its adjoint method.

At present, the research on energy-preserving schemes of \eqref{eq-1-1} is also a prominent project. For more details, please refer to  the discrete gradient methods \cite{mclachlan-99-discrete-gradient}, the averaged vector field (AVF) method \cite{quispel-08-AVF,cai-18-PAVF,li-16-AVF,wang-13-parametric-PRK}, time finite element methods \cite{betsch-00-time-FEM,tang-12-time-FEM} and the Hamiltonian boundary value methods \cite{luigi-10-HBVM,luigi-16-line}, etc. Although symplectic schemes are superior in long-time behavior, they fail to conserve non-quadratic invariants. It is natural to think about whether such a discretization scheme can be found to share both the symplecticity and energy conservation. It is verified that the constant time stepping scheme cannot inherit both features for general Hamiltonian systems \cite{ge-88-Lie-Poisson,chartier-06-algebraic}. This difficulty has been solved in a weaker sense to obtain symplectic-energy-momentum integrators \cite{kane-99-variational}, where the symplecticity is viewed in the space-time sense. Also at a weaker level, a constructive idea by introducing a free parameter in Gauss symplectic methods which can be properly tuned to enforce the energy conservation at each step is introduced in \cite{luigi-12-EQUIP}. Later on, this approach is refined in \cite{luigi-18-analysis-EQUIP,luigi-16-line,wang-13-parametric-PRK,li-20-EQUIP-constraint} and named the EQUIP method. The application on the Hamiltonian wave equations develops EQUIP multi-symplectic methods \cite{chen-20-EQUIP-MS}. In more detail, Brugnano et al. \cite{luigi-12-EQUIP} first construct a family of one-step methods $(p_{n+1},q_{n+1})=\Phi_h^\lambda(p_n,q_n)$ depending on a free parameter $\lambda$ such that each method in this family is symplectic for any fixed parameter, and that a special value of the parameter can be chosen depending on $p_n,q_n$ and $h$ with the energy conservation at the same time. Compared with the previous energy-preserving methods, the EQUIP method also maintains the quadratic invariants of \eqref{eq-1-1}. Based on the obtained parameterized symplectic schemes, this paper present a novel family of arbitrary high-order EQUIP schemes. The one of these schemes has the following simple form as
\begin{align}\label{eq-1-5}
\left \{\aligned
&p_{n+1} = p_n-hH_q\big(\lambda p_{n+1}+(1-\lambda)p_n,\lambda q_n+(1-\lambda)q_{n+1}\big),\\
&q_{n+1} = q_n+hH_p\big(\lambda p_{n+1}+(1-\lambda)p_n,\lambda q_n+(1-\lambda)q_{n+1}\big),\\
&H(p_{n+1},q_{n+1}) = H(p_n,q_n).
\endaligned\right.
\end{align} 
The solvability of all EQUIP schemes such as \eqref{eq-1-5} is confirmed. The result indicates that the free parameter varies around $1/2$ to make the energy preserved, which reveals the superior behavior of the midpoint method in most cases. Notice that in such a way the Hamiltonian in \eqref{eq-1-5} can be replaced with any non-quadratic invariant of \eqref{eq-1-1} to achieve other invariant conservation.

This paper is organized as follows. In Sections \ref{Sec-2} and \ref{Sec-3}, two classes of arbitrary high-order symplectic schemes with a free parameter are obtained, respectively. In particular, we present three symplectic schemes, one of which is symmetric. In addition, we give an alternative derivation of the symplecticity of Scheme \Rmnum{1} from symplectic partitioned RK methods. In Section \ref{Sec-4}, we provide a class of arbitrary high-order EQUIP schemes and the theoretical proof of the existence of the solutions. Numerical tests in Section \ref{Sec-5} support the theoretical analysis. Finally, some concluding remarks are given in Section \ref{Sec-6}.

\section{Parameterized symplectic schemes based on generating functions}\label{Sec-2}
In this section, we introduce a generating function with new coordinates. Following the classic framework of the generating function method, we develop a class of parameterized symplectic schemes.

\subsection{Generating function with new coordinates}\label{subS-2-1}
Consider a time interval $[t_0,t_1]$, and denote by $p$, $q \in\mathcal{R}^d$ the initial values at $t_0$. Likewise, the exact solution at $t_1$ is represented by $P$, $Q \in\mathcal{R}^d$. This implies that the mapping $(p,q) \mapsto (P,Q)$ is symplectic.

\begin{theorem}\label{Th-2-1}
Let $\lambda$ be a real number and the mapping $\varphi:(p,q) \mapsto (P,Q)$ be smooth, close to the identity. It is symplectic if and only if a function $S\big(\lambda P+(1-\lambda)p,\lambda q+(1-\lambda)Q\big)$ exists locally such that
\begin{equation}\label{eq-2-1}
(Q-q)^Td\big(\lambda P+(1-\lambda)p\big)-(P-p)^Td\big(\lambda q+(1-\lambda)Q\big) = dS.
\end{equation}
\end{theorem}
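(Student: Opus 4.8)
The plan is to read the left-hand side of \eqref{eq-2-1} as a differential $1$-form on phase space and to reduce the symplecticity of $\varphi$ to the closedness of this form. Writing $u=\lambda P+(1-\lambda)p$ and $w=\lambda q+(1-\lambda)Q$ for the new coordinates, I introduce
\begin{equation*}
\beta = (Q-q)^T du - (P-p)^T dw,
\end{equation*}
so that \eqref{eq-2-1} becomes $\beta=dS$. First I would observe that, because $\varphi$ is smooth and close to the identity, the differences $P-p$ and $Q-q$ are small and the Jacobian of $(p,q)\mapsto(u,w)$ is close to the identity matrix; the inverse function theorem then makes $(u,w)$ a legitimate local coordinate system, which is precisely what permits $S$ to be viewed as a function of $\big(\lambda P+(1-\lambda)p,\lambda q+(1-\lambda)Q\big)$.

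The decisive step is the computation of $d\beta$. Using $du=\lambda\,dP+(1-\lambda)\,dp$, $dw=\lambda\,dq+(1-\lambda)\,dQ$, $d(Q-q)=dQ-dq$ and $d(P-p)=dP-dp$, one obtains
\begin{align*}
d\beta ={}& (dQ-dq)\wedge\big(\lambda\,dP+(1-\lambda)\,dp\big)\\
& - (dP-dp)\wedge\big(\lambda\,dq+(1-\lambda)\,dQ\big),
\end{align*}
where summation over the $d$ components is understood. The step I expect to require genuine care is that, after expanding the eight wedge products and invoking skew-symmetry, the mixed contributions $dP\wedge dq$ and $dp\wedge dQ$ cancel for \emph{every} $\lambda$, while the coefficients of $dP\wedge dQ$ and $dp\wedge dq$ collapse to $-1$ and $+1$. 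This produces the $\lambda$-independent identity
\begin{equation*}
d\beta = \sum_{i=1}^{d} dp_i\wedge dq_i - \sum_{i=1}^{d} dP_i\wedge dQ_i.
\end{equation*}
I would verify this cancellation directly and, as a check, confirm the special cases $\lambda=0,1,\tfrac12$, which recover the three classical generating functions.

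With this identity both implications follow at once. The map $\varphi$ is symplectic exactly when $\sum_i dP_i\wedge dQ_i=\sum_i dp_i\wedge dq_i$, i.e. precisely when $d\beta=0$. If $\varphi$ is symplectic then $\beta$ is closed; since the domain may be taken star-shaped ($\varphi$ being close to the identity), the Poincar\'e lemma furnishes a local primitive $S$ with $\beta=dS$, which is \eqref{eq-2-1}. Conversely, if such an $S$ exists then $\beta=dS$ is exact, hence closed, so $d\beta=0$ and $\varphi$ is symplectic. The only genuine obstacle is therefore the algebraic cancellation yielding $d\beta=\sum_i dp_i\wedge dq_i-\sum_i dP_i\wedge dQ_i$; once the cross terms are seen to drop out uniformly in $\lambda$, what remains is the classical closed-implies-exact argument.
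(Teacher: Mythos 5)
Your proof is correct, and it takes a genuinely different route from the paper's. The paper never touches the $2$-form characterization of symplecticity: it encodes symplecticity as the block-matrix identities \eqref{eq-2-2} ($P_p^TQ_p=Q_p^TP_p$, etc.), writes the left-hand side of \eqref{eq-2-1} as a $1$-form in $dp,dq$, computes the full Jacobian of its coefficient vector --- including the Hessian terms $\sum_i(Q_i-q_i)\frac{\partial^2P_i}{\partial(p,q)^2}$, which must then be argued away using symmetry of the Hessians --- and invokes the integrability lemma of Hairer et al.\ to pass from symmetry of that Jacobian (shown equivalent to \eqref{eq-2-2}) to a local primitive $\hat S(p,q)$. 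Your exterior-calculus computation $d\beta=\sum_i dp_i\wedge dq_i-\sum_i dP_i\wedge dQ_i$ replaces that entire matrix verification: closedness of $\beta$ is established by a wedge-product expansion in which the mixed terms $dP_i\wedge dq_i$ and $dp_i\wedge dQ_i$ cancel identically in $\lambda$ (I checked the expansion; it is right), and the integrability lemma is replaced by its intrinsic counterpart, the Poincar\'e lemma --- the two are the same statement in different clothing, since symmetry of the coefficient Jacobian of a $1$-form is precisely its closedness written in coordinates. The final step is the same in both proofs and is genuinely needed: since $\varphi$ is close to the identity, the map $(p,q)\mapsto(u,w)$ has Jacobian close to $I_{2d}$ (the paper's \eqref{eq-2-5}), so the inverse/implicit function theorem lets the primitive be re-expressed as a function of the new coordinates; you correctly did not omit this. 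What your route buys is brevity and transparency: no second-derivative terms ever appear, and the $\lambda$-independence of the cancellation is visible at a glance, whereas in the paper it is buried in the matrix bookkeeping of \eqref{eq-2-4}. What the paper's route buys is elementarity for its audience --- everything stays within matrix calculus, and the symmetry argument reuses the same block characterization \eqref{eq-2-2} of symplecticity that the paper appeals to again in Remark \ref{Re-2-4}.
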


\begin{proof}
We write the Jacobian of $\varphi$ as a block matrix $\frac{\partial(P,Q)}{\partial(p,q)} = \left(\begin{array}{cc} P_p & P_q\\ Q_p & Q_q \end{array}\right)$. The symplecticity is equivalent to the following equations \cite{hairer-06-geometric,feng-85-difference}
\begin{equation}\label{eq-2-2}
P_p^TQ_p = Q_p^TP_p,\quad Q_q^TP_q = P_q^TQ_q,\quad Q_p^TP_q+I = P_p^TQ_q.
\end{equation}
Substituting $dP = P_pdp+P_qdq$, $dQ = Q_pdp+Q_qdq$ into the left-hand side of \eqref{eq-2-1}, we obtain
\begin{equation}\label{eq-2-3}
\left(\begin{array}{c}
\lambda P_p^T(Q-q)-(1-\lambda)Q_p^T(P-p)+(1-\lambda)(Q-q)\\
\lambda P_q^T(Q-q)-(1-\lambda)Q_q^T(P-p)-\lambda(P-p)
\end{array}\right)^T
\left(\begin{array}{c}
dp\\dq
\end{array}\right).
\end{equation}
In order to use the integrability lemma \cite{hairer-06-geometric}, one would expect the symmetry of the Jacobian of the coefficient of \eqref{eq-2-3}. We get the Jacobian matrix as
\begin{equation}\label{eq-2-4}
\begin{aligned}
&\lambda\left(\begin{array}{cc}
P_p^TQ_p & P_p^TQ_q\\
P_q^TQ_p+I & P_q^TQ_q
\end{array}\right)
-(1-\lambda)\left(\begin{array}{cc}
Q_p^TP_p & Q_p^TP_q+I\\
Q_q^TP_p & Q_q^TP_q
\end{array}\right)
+\lambda\sum_{i=1}^d (Q_i-q_i)\frac{\partial^2P_i}{\partial(p,q)^2}\\
&+\left(\begin{array}{cc}
(1-\lambda)(Q_p+Q_p^T) & -\lambda P_p^T+(1-\lambda)Q_q\\
-\lambda P_p+(1-\lambda)Q_q^T & -\lambda(P_q+P_q^T)
\end{array}\right)
-(1-\lambda)\sum_{i=1}^d (P_i-p_i)\frac{\partial^2Q_i}{\partial(p,q)^2}.
\end{aligned}
\end{equation}
Since the Hessian matrixes of $P_i$ and $Q_i$ are symmetric, the symmetry of \eqref{eq-2-4} is equivalent to \eqref{eq-2-2}. Hence there exists locally a function $\hat{S}(p,q)$ such that
\begin{equation*}
(Q-q)^Td\big(\lambda P+(1-\lambda)p\big)-(P-p)^Td\big((\lambda q+(1-\lambda)Q\big) = d\hat{S}.
\end{equation*}
Hereafter, we use the notations $u = \lambda P+(1-\lambda)p$, $v = \lambda q+(1-\lambda)Q$. An implicit function
\begin{align*}
\left\{\aligned
&F_1(u,v,p,q) \triangleq \lambda P(p,q)+(1-\lambda)p-u = 0,\\
&F_2(u,v,p,q) \triangleq (1-\lambda)Q(p,q)+\lambda q-v = 0
\endaligned\right.
\end{align*}
is defined with the Jacobian as
\begin{equation}\label{eq-2-5}
\frac{\partial(F_1,F_2)}{\partial(p,q)} =
\left(\begin{array}{cc}
I_d+\lambda\big(\frac{\partial P}{\partial p}-I_d\big) & \lambda\frac{\partial P}{\partial q}\\
(1-\lambda)\frac{\partial Q}{\partial p} & \frac{\partial Q}{\partial q}-\lambda\big(\frac{\partial Q}{\partial q}-I_d\big)
\end{array}\right),
\end{equation}
where $I_d$ is a $d$-dimensional identity matrix. Since $\varphi$ is close to the identity, \eqref{eq-2-5} is invertible. The implicit function theorem thus proves that $p$ and $q$ are both functions of the new coordinates $u$ and $v$. Setting $S(u,v) \triangleq \hat{S}(p(u,v),q(u,v))$, this completes the proof.
\end{proof}

\begin{remark}\label{Re-2-1}
Theorem \ref{Th-2-1} gives a more general form of generating functions. When $\lambda = 0$, $1$, $\frac{1}{2}$, respectively, we reproduce the generating functions under the three classical coordinates \cite{hairer-06-geometric,feng-89-generating}, which are widely used to construct symplectic algorithms. For each fixed parameter, we can get a specific generating function.
\end{remark}

Comparing the coefficients of $dS = \partial_uSdu+\partial_vSdv$ with \eqref{eq-2-1}, we derive
\begin{align}\label{eq-2-6}
\left \{\aligned
&P = p-\partial_vS(\lambda P+(1-\lambda)p,\lambda q+(1-\lambda)Q),\\
&Q = q+\partial_uS(\lambda P+(1-\lambda)p,\lambda q+(1-\lambda)Q).
\endaligned\right.
\end{align}
Theorem \ref{Th-2-1} tells us that whatever the scalar function $S$ is, the system \eqref{eq-2-6} defines a symplectic transformation $(p, q) \mapsto (P,Q)$ for any fixed parameter $\lambda$.

\subsection{The Hamilton-Jacobi equation with new variables}\label{subS-2-2}
The Hamilton-Jacobi equation for a new time-dependent generating function $S(u,v,t)$ is introduced. Assuming the point $\big(P(t), Q(t)\big)$ to move along the exact flow of \eqref{eq-1-1}, it is seen that a smooth generating function $S(u,v,t)$ generates the exact flow by \eqref{eq-2-6} . 

\begin{theorem}\label{Th-2-2}
Let $\lambda$ be a real number. If $S(u,v,t)$ is a smooth solution of the Hamilton-Jacobi equation 
\begin{equation}\label{eq-2-7}
\frac{\partial S}{\partial t}(u,v,t) = H\left(u-(1-\lambda)\frac{\partial S}{\partial v}(u,v,t), v+\lambda\frac{\partial S}{\partial u}(u,v,t)\right)
\end{equation}
with initial condition $S(u,v,0) = 0$, then the mapping $(p,q) \mapsto (P,Q)$ defined by \eqref{eq-2-6}, is the exact flow of the Hamiltonian system \eqref{eq-1-1}.
\end{theorem}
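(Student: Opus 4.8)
The plan is to treat \eqref{eq-2-6} as defining, for frozen initial data $p,q$, a curve $t\mapsto\big(P(t),Q(t)\big)$ through the new coordinates $u=\lambda P+(1-\lambda)p$ and $v=\lambda q+(1-\lambda)Q$, and to show that this curve solves \eqref{eq-1-1} with $P(0)=p$, $Q(0)=q$. The initial value is immediate: $S(u,v,0)=0$ makes $\partial_uS$ and $\partial_vS$ vanish at $t=0$, so \eqref{eq-2-6} collapses to the identity there. The decisive preliminary observation is that the two arguments of $H$ in \eqref{eq-2-7} are nothing but $P$ and $Q$. Indeed, substituting $\partial_vS=p-P$ and $\partial_uS=Q-q$ from \eqref{eq-2-6} into $u-(1-\lambda)\partial_vS$ and $v+\lambda\partial_uS$ and cancelling gives $u-(1-\lambda)\partial_vS=P$ and $v+\lambda\partial_uS=Q$. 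Hence, along \eqref{eq-2-6}, the Hamilton-Jacobi equation reads simply $\partial_tS=H(P,Q)$, fixing the evaluation point of $H$ once and for all.

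Next I would differentiate \eqref{eq-2-6} in $t$. Because $p,q$ are constant, the chain rule yields $\dot u=\lambda\dot P$ and $\dot v=(1-\lambda)\dot Q$, so with $S_{uu},S_{uv},S_{vu},S_{vv}$ denoting the Hessian blocks of $S$ (note $S_{vu}=S_{uv}^{T}$) one gets
\[
\dot P=-\big(\lambda S_{vu}\dot P+(1-\lambda)S_{vv}\dot Q+S_{vt}\big),\qquad \dot Q=\lambda S_{uu}\dot P+(1-\lambda)S_{uv}\dot Q+S_{ut}.
\]
This is a linear system $M\,(\dot P,\dot Q)^{T}=(-S_{vt},S_{ut})^{T}$ with coefficient matrix
\[
M=\begin{pmatrix} I+\lambda S_{vu} & (1-\lambda)S_{vv}\\ -\lambda S_{uu} & I-(1-\lambda)S_{uv}\end{pmatrix}.
\]
Since $\varphi$ is close to the identity the second derivatives of $S$ are small, so $M$ stays near the identity and is invertible---the same mechanism that made \eqref{eq-2-5} invertible in the proof of Theorem~\ref{Th-2-1}.

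To identify the right-hand side I would differentiate \eqref{eq-2-7} with respect to $u$ and to $v$, using the identification of the arguments of $H$ with $(P,Q)$; the chain rule then expresses $S_{ut}$ and $S_{vt}$ through $H_p(P,Q)$, $H_q(P,Q)$ and the same Hessian blocks, for instance $S_{ut}=(I-(1-\lambda)S_{uv})H_p+\lambda S_{uu}H_q$ and $S_{vt}=-(1-\lambda)S_{vv}H_p+(I+\lambda S_{vu})H_q$. It then remains to check that the Hamiltonian vector field solves the linear system: substituting $\dot P=-H_q(P,Q)$ and $\dot Q=H_p(P,Q)$ into $M\,(\dot P,\dot Q)^{T}$ reproduces block by block the vector $(-S_{vt},S_{ut})^{T}$ just computed, so invertibility of $M$ forces these velocities and, together with the initial condition, identifies the curve as the exact flow of \eqref{eq-1-1}. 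I expect the only real difficulty to be organizational rather than conceptual: keeping the total derivatives $\dot u=\lambda\dot P$, $\dot v=(1-\lambda)\dot Q$ cleanly separated from the partials in $(u,v,t)$, and---in the genuinely $d$-dimensional case---tracking the transposes $S_{vu}=S_{uv}^{T}$ so that the blocks of $M$ agree with those produced by the differentiated Hamilton-Jacobi equation. Once the arguments of $H$ are collapsed to $(P,Q)$ as in the first step, everything downstream is linear algebra and the verification against $M$ is the natural endpoint.
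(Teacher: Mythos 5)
Your proposal is correct and follows essentially the same route as the paper's proof: differentiating \eqref{eq-2-6} in time, differentiating the Hamilton--Jacobi equation \eqref{eq-2-7} in $u$ and $v$, and concluding via invertibility of the block matrix $M$ (which is exactly the coefficient matrix in \eqref{eq-2-12}, since the paper's homogeneous system is just your $M\big((\dot P,\dot Q)^T-(-H_q,H_p)^T\big)=0$ rearranged). The only cosmetic differences are that you make explicit at the outset the identification of the arguments of $H$ with $(P,Q)$ (which the paper uses silently when writing $H_P(P,Q)$, $H_Q(P,Q)$ in \eqref{eq-2-9}) and that you track the transpose $S_{vu}=S_{uv}^T$ more carefully than the paper does.
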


\begin{proof}
Differentiating the first equation of \eqref{eq-2-6} with respect to time $t$ yields
\begin{equation}\label{eq-2-8}
\frac{\partial^2 S}{\partial t\partial v}(u,v,t) = -\left(I_d+\lambda\frac{\partial^2 S}{\partial u\partial v}(u,v,t)\right)\dot{P}-(1-\lambda)\frac{\partial^2 S}{\partial v^2}(u,v,t)\dot{Q}.
\end{equation}
Differentiating both sides of \eqref{eq-2-7} about the variable $v$ reads
\begin{equation}\label{eq-2-9}
\frac{\partial^2 S}{\partial t\partial v}(u,v,t) = -(1-\lambda)\frac{\partial^2 S}{\partial v^2}(u,v,t)H_P(P,Q)+\left(I_d+\lambda\frac{\partial^2 S}{\partial u\partial v}(u,v,t)\right)H_Q(P,Q).
\end{equation}
Subtracting \eqref{eq-2-9} from \eqref{eq-2-8}, one obtain
\begin{equation}\label{eq-2-10}
\left(I_d+\lambda\frac{\partial^2 S}{\partial u\partial v}(u,v,t)\right)\big(\dot{P}+H_Q(P,Q)\big)+(1-\lambda)\frac{\partial^2 S}{\partial v^2}(u,v,t)\big(\dot{Q}-H_P(P,Q)\big) = 0.
\end{equation}
Similarly, differentiating the second equation of \eqref{eq-2-6} and using \eqref{eq-2-7} lead to
\begin{equation}\label{eq-2-11}
-\lambda\frac{\partial^2 S}{\partial u^2}(u,v,t)\big(\dot{P}+H_Q(P,Q)\big)+\left(I_d-(1-\lambda)\frac{\partial^2 S}{\partial u\partial v}(u,v,t)\right)\big(\dot{Q}-H_P(P,Q)\big) = 0.
\end{equation}
Combining \eqref{eq-2-10} and \eqref{eq-2-11}, we get the matrix equation
\begin{equation}\label{eq-2-12}
\left (\begin{array}{cc}
I_d+\lambda\frac{\partial^2 S}{\partial u\partial v}(u,v,t) & (1-\lambda)\frac{\partial^2 S}{\partial v^2}(u,v,t)\\
-\lambda\frac{\partial^2 S}{\partial u^2}(u,v,t) & I_d-(1-\lambda)\frac{\partial^2 S}{\partial u\partial v}(u,v,t)
\end{array}\right)
\left(\begin{array}{c}
\dot{P}+H_Q(P,Q)\\ \dot{Q}-H_P(P,Q)
\end{array}\right) = 0.
\end{equation}
Owing to the mapping $(p,q) \mapsto (P,Q)$ close to the identity, the coefficient matrix of \eqref{eq-2-12} is non-singular. This proves the validity of two equations of \eqref{eq-1-1}, i.e.,
\begin{equation*}
\dot{P} = -H_Q(P,Q),\quad \dot{Q} = H_P(P,Q).
\end{equation*}
The condition $S(u,v,0) = 0$ means that $P(0) = p$, $Q(0) = q$.
\end{proof}

\begin{remark}\label{Re-2-2}
Based on Theorem \ref{Th-2-2}, when $\lambda = 0$, $1$, $\frac{1}{2}$, respectively, we can recover the Hamilton-Jacobi equations corresponding to the three classical variables \cite{hairer-06-geometric,feng-10-symplectic,feng-89-generating}. This generalization of the Hamilton-Jacobi equation will directly lead to the following parameterized symplectic schemes.
\end{remark}

\subsection{Symplectic schemes based on generating functions}\label{subS-2-3}
A series of symplectic schemes can be constructed by basing on the three classical generating functions \cite{hairer-06-geometric,feng-89-generating}. We still consider employing an approximate solution of \eqref{eq-2-7} to construct symplectic schemes. For this purpose, a power series about $t$ is as follows:
\begin{equation}\label{eq-2-13}
S(u,v,t) = \sum_{i=1}^{\infty} K_i(u,v)t^i,
\end{equation}
which is an explicit expression of the new generating function with undetermined $K_i$, and satisfies $S(u,v,0) = 0$. Inserting this series into \eqref{eq-2-7} and comparing like powers of $t$ obtain
\begin{align*}
&K_1(u,v) = H(u,v),\\
&K_2(u,v) = (\lambda-\frac{1}{2})\left(\big(\frac{\partial H}{\partial u}\big)^T\frac{\partial H}{\partial v}\right)(u,v),\\
&K_3(u,v) = \frac{1}{2}(\lambda^2-\lambda+\frac{1}{3})\left(\big(\frac{\partial H}{\partial u}\big)^T \frac{\partial^2H}{\partial v^2}\frac{\partial H}{\partial u}+\big(\frac{\partial H}{\partial v}\big)^T \frac{\partial^2H}{\partial u^2}\frac{\partial H}{\partial v}\right)(u,v)
\nonumber\\&~~~~~~~~~~~~~~+(\lambda^2-\lambda+\frac{1}{6})\left(\big(\frac{\partial H}{\partial v}\big)^T\frac{\partial^2H}{\partial u\partial v}\frac{\partial H}{\partial u}\right)(u,v),
\end{align*}
and further $K_i(u,v)$, $i \geqslant 4$ can be obtained by computing the Taylor expansion. A natural way to approximate $S$ is to truncate the series \eqref{eq-2-13} by
\begin{equation}\label{eq-2-14}
\bar{S}(u,v) = \sum_{i=1}^{r} K_i(u,v)h^i.
\end{equation}
Inserting \eqref{eq-2-14} into \eqref{eq-2-6}, when $\lambda\neq\frac{1}{2}$, we get a symplectic scheme of order $r$. For all the following schemes, we use the notations
\begin{equation}\label{eq-2-15}
\bar{u} = \lambda p_{n+1}+(1-\lambda)p_n,\quad \bar{v} = \lambda q_n+(1-\lambda)q_{n+1}.
\end{equation}
Next, two symplectic schemes for \eqref{eq-1-1} are presented, which contain a free parameter $\lambda$. These two schemes can be obtained by taking $r =1$, $2$ in \eqref{eq-2-14}, respectively.
\begin{align}\left \{\aligned
&p_{n+1} = p_n-h\partial_{\bar{v}}H(\bar{u},\bar{v}),\\
&q_{n+1} = q_n+h\partial_{\bar{u}}H(\bar{u},\bar{v}).\\
\endaligned\right.
\end{align}
\vspace{-4mm}
\begin{align}
\left \{\aligned
&p_{n+1} = p_n-h\partial_{\bar{v}}H(\bar{u},\bar{v})-(\lambda-\frac{1}{2})h^2\left(\frac{\partial^2H}{\partial {\bar{v}}^2}\frac{\partial H}{\partial {\bar{u}}}+\frac{\partial^2H}{\partial {\bar{v}}\partial {\bar{u}}}\frac{\partial H}{\partial {\bar{v}}}\right)(\bar{u},\bar{v}),\\
&q_{n+1} = q_n+h\partial_{\bar{u}}H(\bar{u},\bar{v})+(\lambda-\frac{1}{2})h^2\left(\frac{\partial^2H}{\partial {\bar{u}}^2}\frac{\partial H}{\partial {\bar{v}}}+\frac{\partial^2H}{\partial {\bar{u}}\partial {\bar{v}}}\frac{\partial H}{\partial {\bar{u}}}\right)(\bar{u},\bar{v}).\\
\endaligned\right.
\end{align}

The former is Scheme \Rmnum{1}, and we name the latter Scheme \Rmnum{2}. Keeping $K_3$, a symplectic scheme of order $3$ can be gained. This procedure can be repeated to obtain parameterized symplectic schemes of any high order. It is noted that for $r \geqslant 3$, these schemes require the computation of higher derivatives. 

\begin{remark}\label{Re-2-3}
Among all known symplectic methods, only the symplectic Euler methods and the implicit midpoint rule have the simplest form, and they are very friendly in practical applications \cite{miniati-15-galaxies,hairer-06-geometric,sanzserna-94-Hamiltonian}. Here, these two methods are unified, so that for any fixed parameter, we can obtain a simple symplectic scheme.
\end{remark}

\subsection{Derivation of Scheme \Rmnum{1} based on symplectic partitioned RK methods}\label{subS-2-4}
We attempt to give an alternative derivation of Scheme \Rmnum{1} by partitioned RK methods. Appling partitioned RK methods \cite{hairer-06-geometric,sanzserna-94-Hamiltonian} to \eqref{eq-1-1} generates
\begin{align}\label{eq-2-18}
\left\{\aligned
&k_i=-H_q\Big(p_n+h\sum_{j=1}^{s}a_{ij}k_j,q_n+h\sum_{j=1}^{s}\hat{a}_{ij}l_j\Big),\ l_i=H_p\Big(p_n+h\sum_{j=1}^{s}a_{ij}k_j,q_n+h\sum_{j=1}^{s}\hat{a}_{ij}l_j\Big),\\
&p_{n+1}=p_n+h\sum_{i=1}^{s}b_ik_i,\ q_{n+1}=q_{n}+h\sum_{i=1}^{s}\hat{b}_il_i,
\endaligned\right.
\end{align}
where $b_i$, $a_{ij}$ and $\hat{b}_i$, $\hat{a}_{ij}$ $(i,j=1,\dots,s)$ are the coefficients of two RK methods with $s$ stages. It is known that if the identities 
\begin{equation}\label{eq-2-19}
b_i\hat{a}_{ij}+\hat{b}_ja_{ji}=b_i\hat{b}_j,\ b_i=\hat{b}_i\quad \mbox{for}\ i,j=1,\dots,s
\end{equation}
hold, then \eqref{eq-2-18} is symplectic \cite{hairer-06-geometric,sun-93-SPRK}. We present a one-stage partitioned RK method by setting $a_{11}=\lambda$, $\hat{a}_{11}=1-\lambda$ and $b_1=\hat{b}_{1}=1$. Its Butcher tableau is of the form
$$\begin{tabular}{c|c}
$\lambda$ & $\lambda$\\ \hline
& $1$\\
\end{tabular}\quad
\begin{tabular}{c|c}
$1-\lambda$ & $1-\lambda$\\ \hline
& $1$\\
\end{tabular}$$
with any parameter $\lambda$. Moreover, this method applied to \eqref{eq-1-1} can be written as
\begin{align}\label{eq-2-20}
\left\{\aligned
&k_1=-H_q\big(p_n+\lambda hk_1,q_n+(1-\lambda)hl_1\big),\ l_1=H_p\big(p_n+\lambda hk_1,q_n+(1-\lambda)hl_1\big),\\
&p_{n+1}=p_n+hk_1,\ q_{n+1}=q_{n}+hl_1.
\endaligned\right.
\end{align}
Notice that \eqref{eq-2-20} is exactly Scheme \Rmnum{1} through a simple check. Scheme \Rmnum{1} satisfies the condition \eqref{eq-2-19} such that it is proved as a symplectic scheme for any $\lambda$. 

\begin{remark}\label{Re-2-4}
Actually, the symplecticity of Scheme \Rmnum{1} can be verified by the following equation \cite{feng-85-difference} as
\begin{equation}\label{eq-2-21}
\bigg(\frac{\partial(p_{n+1},q_{n+1})}{\partial(p_n,q_n)}\bigg)^TJ\bigg(\frac{\partial(p_{n+1},q_{n+1})}{\partial(p_n,q_n)}\bigg)= J,\quad J = \left(\begin{array}{cc} 0 & I_d\\ -I_d & 0 \end{array}\right).
\end{equation} 
In order to save space, this detail can be omitted.
\end{remark}

\begin{remark}\label{Re-2-5}
For a separable system with the kinetic energy $T(p)=\frac{1}{2}p^TM^{-1}p$ and the potential $U(q)$, where $M$ is a constant non-degenerate matrix, the dynamics obeys $\dot{p} = -U_q(q)$, $\dot{q} = M^{-1}p$, which is equivalent to the second-order system $\ddot{q} = -M^{-1}U_q(q)$. Then one can obtain a one-stage symplectic Nystr\"{o}m method by applying Scheme \Rmnum{1} as
\begin{align*}
\left\{\aligned
&k_1=-M^{-1}U_q\big(q_n+(1-\lambda)h\dot{q}_n+\lambda(1-\lambda)h^2k_1\big),\\
&q_{n+1}=q_{n}+h\dot{q}_n+\lambda h^2k_1,\ \dot{q}_{n+1}=\dot{q}_n+hk_1.
\endaligned\right.
\end{align*}
This is the Nystr\"{o}m method of Scheme \Rmnum{1}. It is explicit if and only if $\lambda=0$ or $1$, which is the symplectic Euler methods for separable systems.
\end{remark}

\section{Parameterized symplectic schemes based on symmetric composition methods}\label{Sec-3}
With the purpose of avoiding higher derivatives of the Hamiltonian in the generating function method, we consider the symmetric composition methods \cite{qin-92-composition,hairer-06-geometric}. Scheme \Rmnum{1} denotes a numerical flow
\begin{equation}\label{eq-3-1}
\Phi_h^{\lambda}: (p_n,q_n) \mapsto (p_{n+1},q_{n+1}).
\end{equation}
By a straightforward check, the adjoint method of scheme \Rmnum{1} is
\begin{equation*}
(\Phi_h^{\lambda})^* = \Phi_h^{1-\lambda}.
\end{equation*}
Noting that $\Phi_h^{1-\lambda}$ is also symplectic for any parameter $\lambda$. Hence the symmetry requirement for order $2$ leads to a symmetric numerical flow, denoted by
\begin{equation}\label{eq-3-2}
\Psi_h^\lambda\triangleq \Phi_{\frac{h}{2}}^{\lambda} \circ (\Phi_{\frac{h}{2}}^{\lambda})^* = \Phi_{\frac{h}{2}}^{\lambda} \circ \Phi_{\frac{h}{2}}^{1-\lambda}: (p_n,q_n) \mapsto (p_{n+1},q_{n+1}),
\end{equation}
which is named Scheme \Rmnum{3}. The symplecticity of Scheme \Rmnum{3} follows that the composition of symplectic schemes is still symplectic. 

Using the idea of composition methods, we can derive a symmetric symplectic scheme of order $4$ by the symmetric composition of Scheme \Rmnum{3}. Just as the Triple Jump and Suzuki's Fractals \cite{hairer-06-geometric,suzuki-90-decomposition}, this procedure can be repeated to construct higher order symplectic schemes without higher derivatives. Here, we present Scheme \Rmnum{3} as an example of this class of parameterized symplectic schemes.

\section{The construction and solvability of EQUIP schemes}\label{Sec-4}
Symplectic schemes only conserve the quadratic invariants \cite{cooper-20-RK,hairer-06-geometric,luigi-16-line}. Here, we concentrate on the case that the Hamiltonian is not quadratic. Following the idea in \cite{luigi-12-EQUIP}, the parameter $\lambda$ may be properly tuned to achieve energy conservation at each time step.

\subsection{Energy-preserving schemes}\label{subS-4-1}
We construct a novel family of EQUIP methods by using the parameterized symplectic schemes obtained in Section \ref{Sec-3}. Based on Scheme \Rmnum{1}, we first get the following EQUIP scheme named Scheme \Rmnum{4}. The notations $\bar{u}$ and $\bar{v}$ are introduced in \eqref{eq-2-15}.
\begin{align*}
 \left \{
 \aligned
 &p_{n+1} = p_n-h\partial_{\bar{v}}H(\bar{u},\bar{v}),\\
 &q_{n+1} = q_n+h\partial_{\bar{u}}H(\bar{u},\bar{v}),\\
 &H(p_{n+1},q_{n+1}) = H(p_n,q_n).
 \endaligned
 \right.
\end{align*}

\begin{remark}\label{Re-4-1}
If a real value $\lambda$ can be found such that $H(p_{n+1},q_{n+1}) = H(p_n,q_n)$, we get a scheme \eqref{eq-3-1} at the current step, which ensures energy conservation. It is known that for a fixed $\lambda$, \eqref{eq-3-1} is symplectic. If the existence of the parameter in Scheme \Rmnum{4} is verified, there exists a sequence \{$\lambda_i$\} as the diagram
\begin{equation}\label{eq-4-1}
(p_n,q_n) \stackrel{\Phi_h^{\lambda_1}}{\longmapsto} (p_{n+1},q_{n+1}) \stackrel{\Phi_h^{\lambda_2}}{\longmapsto} (p_{n+2},q_{n+2}) \stackrel{\Phi_h^{\lambda_3}}{\longmapsto} (p_{n+3},q_{n+3}) \longmapsto \cdots
\end{equation}
such that the solution $(p_{n+i},q_{n+i})$ defined by $\Phi_h^{\lambda_i}$ satisfies $H(p_{n+i},q_{n+i})=H(p_n,q_n)$, $i=1,2,\cdots$. The existence of \{$\lambda_i$\} will be proved later. Noting that in \eqref{eq-4-2}, symplectic schemes with different fixed parameters are used to obtain energy conservation. Since the value $\lambda_i$ is generally not equal to $1/2$, Scheme \Rmnum{4} preserves all quadratic invariants of the form $p^TCq$ with a constant matrix $C$.
\end{remark}

Similarly, the following EQUIP scheme called Scheme \Rmnum{5} is obtained by Scheme \Rmnum{3}, where the notation $\Psi_h^\lambda$ is used in \eqref{eq-3-2}.
\begin{align*}
 \left \{
 \aligned
 &(p_{n+1},q_{n+1}) = \Psi_h^\lambda(p_n,q_n),\\
 &H(p_{n+1},q_{n+1}) = H(p_n,q_n).
 \endaligned
 \right.
\end{align*}
For Schemes \Rmnum{4} and \Rmnum{5}, we need to seek a fixed value of the parameter $\lambda$ at each step such that $H(p_{n+1},q_{n+1}) = H(p_n,q_n)$. Scheme \Rmnum{4} is an energy-preserving scheme of order $1$, while Scheme \Rmnum{5} is a second order symmetric scheme. 

Based on the obtained schemes in Section \ref{Sec-3}, higher order EQUIP schemes can be derived by using $\lambda$ to enforce energy conservation. These schemes accurately conserve the quadratic invariants of the form $p^TCq$. In terms of computational complexity, we do not consider the construction by employing the high-order parameterized symplectic schemes of the generating function method.

\subsection{The solvability of the EQUIP schemes}\label{subS-4-2}
A novel family of EQUIP methods is constructed, which includes the above Schemes \Rmnum{4} and \Rmnum{5} as its members. Moreover, the solvability of these schemes is a necessary problem. For the given $(p_n,q_n)$ and $h$, we first consider the existence of the solution of the following equation
\begin{equation*}
G(p_{n+1},q_{n+1},\lambda,p_n,q_n,h)\triangleq
\left(\begin{array}{c}
p_{n+1}-p_n+hH_q\big(\lambda p_{n+1}+(1-\lambda)p_n,\lambda q_n+(1-\lambda)q_{n+1}\big)\\
q_{n+1}-q_n-hH_p\big(\lambda p_{n+1}+(1-\lambda)p_n,\lambda q_n+(1-\lambda)q_{n+1}\big)\\
H(p_{n+1},q_{n+1})-H(p_n,q_n)
\end{array}\right)=0
\end{equation*}
which is equivalent to Scheme \Rmnum{4}. The function $G$ vanishes at point $(p_n,q_n,\frac{1}{2},p_n,q_n,0)$. The Jacobian of $G$ is obtained as
\begin{equation}\label{eq-4-2}
\frac{\partial G}{\partial(p_{n+1},q_{n+1},\lambda)}\bigg|_{(p_n,q_n,\frac{1}{2},p_n,q_n,0)} =
\left ( \begin{array}{ccc}
I_d & 0 & 0\\
0 & I_d & 0\\
H_p(p_n,q_n) & H_q(p_n,q_n) & 0
\end{array}\right).
\end{equation}
Since \eqref{eq-4-2} is degenerate anyway, the traditional steps does not help to acquire the solvability of Scheme \Rmnum{4}. Thus the Newton-type iteration is invalid. For this reason, the scalar function
\begin{equation}\label{eq-4-3}
E(\lambda,h)=H\big(p_{n+1}(\lambda,h),q_{n+1}(\lambda,h)\big)-H(p_n,q_n)
\end{equation}
is introduced as an error of the Hamiltonian, where $p_{n+1}(\lambda,h)$ and $q_{n+1}(\lambda,h)$ are defined by the first two equations of Scheme \Rmnum{4}. At this moment, the solvability of Scheme IV can be obtained from the existence of a solution in the form $\lambda=\lambda(h)$ of $E(\lambda,h)=0$ with $E(\frac{1}{2},0)=0$ as the initial condition. 

The following derivatives of \eqref{eq-4-3} vanish at $(\frac{1}{2},0)$, i.e.,
\begin{equation}\label{eq-4-4}
E_{\lambda}(\frac{1}{2},0)=E_h(\frac{1}{2},0)=E_{\lambda\lambda}(\frac{1}{2},0)=E_{\lambda h}(\frac{1}{2},0)=E_{hh}(\frac{1}{2},0)=0.
\end{equation}
This is an unexpected result making the solvability difficult, but \eqref{eq-4-4} prompt us to continue the succedent derivation.
  
\begin{theorem}\label{Th-4-1}
Assuming that the Hamiltonian is sufficiently smooth, then there exists a function $\lambda$=$\lambda(h)$ defined in a neighborhood $(-h_0,h_0)$ with $h_0$ small enough, such that
\begin{enumerate}
\item $E\big(\lambda(h),h\big)=0$ for all $h \in (-h_0,h_0)$;
\item $\lambda(h)=\frac{1}{2}+\varepsilon(h)h$ with $\varepsilon(h)=\text{constant}+\mathcal{O}(h)$.
\end{enumerate}
\end{theorem}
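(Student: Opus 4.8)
The plan is to cure the degeneracy of the Jacobian \eqref{eq-4-2} by first peeling off the trivial $h$-dependence of $E$ and then applying the ordinary implicit function theorem to the reduced, scalar function; the whole argument lives in the single real variable $\lambda$ with $h$ as a parameter. First I would record two exact identities, valid for \emph{every} $\lambda$, that explain the vanishing collected in \eqref{eq-4-4}. At $h=0$ the first two equations of Scheme \Rmnum{4} force $p_{n+1}=p_n$ and $q_{n+1}=q_n$ regardless of $\lambda$, so $E(\lambda,0)\equiv 0$. Differentiating once in $h$ and using $\partial_h p_{n+1}|_{h=0}=-H_q(p_n,q_n)$, $\partial_h q_{n+1}|_{h=0}=H_p(p_n,q_n)$ gives $E_h(\lambda,0)=H_p^T(-H_q)+H_q^TH_p=0$ for every $\lambda$, the cancellation being precisely the Hamiltonian structure of \eqref{eq-1-1}. (That $p_{n+1},q_{n+1}$ are smooth in $(\lambda,h)$ for small $h$ is the standard near-identity solvability of Scheme \Rmnum{1}.)

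Since $E(\cdot,0)$ and $E_h(\cdot,0)$ vanish identically near $\lambda=\tfrac12$, Taylor's theorem with integral remainder (Hadamard's lemma applied twice in $h$) produces a smooth $G(\lambda,h)$ with $E(\lambda,h)=h^2G(\lambda,h)$, and for $h\neq 0$ the equation $E=0$ is equivalent to $G=0$. By construction $G(\lambda,0)=\tfrac12 E_{hh}(\lambda,0)$, which I would obtain by expanding $p_{n+1},q_{n+1}$ to second order in $h$. Because $\bar u,\bar v$ depend on $\lambda$ only affinely, $G(\lambda,0)$ is affine in $\lambda$; substituting $-H_q,H_p$ for the increments and using $H_{qp}=H_{pq}^T$ collapses the $\lambda$-independent piece and several cross terms, leaving $G(\lambda,0)=(\lambda-\tfrac12)\,c$ with $c=H_p^TH_{qq}H_p-H_q^TH_{pp}H_q$ evaluated at $(p_n,q_n)$. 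In particular $G(\tfrac12,0)=0$ (in agreement with $E_{hh}(\tfrac12,0)=0$ from \eqref{eq-4-4}) while $G_\lambda(\tfrac12,0)=c$.

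Now I would apply the scalar implicit function theorem to $G$ at $(\tfrac12,0)$. If $c\neq 0$, then $G_\lambda(\tfrac12,0)=c$ is invertible, so there is a smooth $\lambda=\lambda(h)$ on some interval $(-h_0,h_0)$ with $\lambda(0)=\tfrac12$ and $G(\lambda(h),h)\equiv 0$, hence $E(\lambda(h),h)\equiv 0$; this is conclusion (1). Differentiating $G(\lambda(h),h)\equiv 0$ gives $\lambda'(0)=-G_h(\tfrac12,0)/c$, and since $G_h(\tfrac12,0)=\tfrac16 E_{hhh}(\tfrac12,0)$ this slope is a finite constant. Thus $\lambda(h)=\tfrac12+\lambda'(0)h+\mathcal O(h^2)$, and setting $\varepsilon(h)=(\lambda(h)-\tfrac12)/h$ yields $\varepsilon(h)=\lambda'(0)+\mathcal O(h)=\text{constant}+\mathcal O(h)$, which is conclusion (2).

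The main obstacle, and where I expect to spend most of the effort, is the nondegeneracy $c\neq 0$: the five vanishing derivatives in \eqref{eq-4-4} show that every first- and second-order attempt collapses, so the entire statement rests on a third-order quantity, and even the meaning of a finite ``constant'' in (2) requires $E_{\lambda hh}(\tfrac12,0)=2c$ to be nonzero. The delicate part is therefore the careful second-order expansion that isolates $c=H_p^TH_{qq}H_p-H_q^TH_{pp}H_q$ together with the cancellations that annihilate the $\lambda$-independent contribution. This quantity is nonzero for generic non-quadratic $H$ and can fail only on a lower-dimensional set of states (for instance at equilibria, where the scheme is stationary and the result is trivial), so the existence claim is naturally read at states where $c\neq 0$, with a higher-order expansion needed on the exceptional set. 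The remaining bookkeeping—justifying the smooth division by $h^2$ and evaluating $G_h(\tfrac12,0)$—is routine once the expansion is in place.
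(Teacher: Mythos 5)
Your proposal is correct and proves the stated theorem, but the reduction you use differs in execution from the paper's, so a comparison is worthwhile. The paper Taylor-expands $E$ at $(\tfrac12,0)$ as in \eqref{eq-4-6}, substitutes the ansatz $\lambda=\tfrac12+\varepsilon h$, divides by $h^3$ to form $\bar E(\varepsilon,h)$ in \eqref{eq-4-7}, and applies the implicit function theorem in the \emph{new} unknown $\varepsilon$ at the point \eqref{eq-4-8}; you instead factor $E(\lambda,h)=h^2G(\lambda,h)$ by Hadamard's lemma and apply the implicit function theorem in the \emph{original} unknown $\lambda$ at $(\tfrac12,0)$, recovering the form $\lambda(h)=\tfrac12+\varepsilon(h)h$ only afterwards. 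The two routes hinge on the identical nondegeneracy condition, since $G_\lambda(\tfrac12,0)=c=\tfrac12E_{hh\lambda}(\tfrac12,0)$ while the paper needs $\partial_\varepsilon\bar E(\varepsilon_0,0)=\tfrac12E_{hh\lambda}(\tfrac12,0)\neq0$, and they yield the same leading constant: your $\lambda'(0)=-E_{hhh}(\tfrac12,0)/(6c)$ is exactly the paper's $\varepsilon_0$ in \eqref{eq-4-8}. Your variant buys two things the paper leaves implicit. First, the exact identities $E(\lambda,0)\equiv0$ and $E_h(\lambda,0)\equiv0$ for \emph{all} $\lambda$ are precisely what is needed to discard the $h^0(\lambda-\tfrac12)^j$ and $h^1(\lambda-\tfrac12)^j$ terms in \eqref{eq-4-6}; the pointwise list \eqref{eq-4-4} alone does not justify that expansion, so your derivation closes a small logical gap. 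Second, your computation $E_{hh}(\lambda,0)=2(\lambda-\tfrac12)c$ with $c=H_p^TH_{qq}H_p-H_q^TH_{pp}H_q$ shows the leading energy-error coefficient is \emph{affine} in $\lambda$, which is what actually converts the paper's assertion in \eqref{eq-4-5} that $C(\lambda)\neq0$ for $\lambda\neq\tfrac12$ into the statement $E_{hh\lambda}(\tfrac12,0)\neq0$ needed for the implicit function theorem (without affinity, $C$ could vanish to second order at $\tfrac12$). What the paper's formulation buys in exchange is economy: it works entirely at the level of Taylor coefficients of $E$ and never expands the scheme itself, and it delivers uniqueness of $\varepsilon(h)$ directly. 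Both arguments rest on the same genericity hypothesis $c\neq0$ at $(p_n,q_n)$; the paper buries it in the unproved claims $C_0\neq0$, $C(\lambda)\neq0$ of \eqref{eq-4-5}, while you surface it explicitly and restrict to the generic case, which is fair rather than a defect. (Your explicit $c$ also exposes why the paper's closing claim that the same derivation covers Scheme \Rmnum{5} is delicate: for the symmetric composition the $h^2$ coefficient of the energy error vanishes identically in $\lambda$, so the compensating mechanism that drives both proofs is absent at that order and must be sought one order higher.)
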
 

\begin{proof}
The series \eqref{eq-2-14} indicates that when $\lambda=\frac{1}{2}$, Scheme \Rmnum{1} is of order $2$, otherwise it is of order $1$. Thus one can obtain
\begin{align}\label{eq-4-5}
E(\lambda,h) = \left\{\aligned
&C_0h^3+\mathcal{O}(h^4),\ C_0\neq0,\quad\quad\quad\mbox{for}\ \lambda=\frac{1}{2},\\
&C(\lambda)h^2+\mathcal{O}(h^3),\ C(\lambda)\neq0,\quad\mbox{for}\ \lambda\neq\frac{1}{2},
\endaligned\right.
\end{align}
where the constant $C_0$ and the function $C(\lambda)$ also depend on $p_n$ and $q_n$. Then the expansion of \eqref{eq-4-4} can be written as
\begin{equation}\label{eq-4-6}
E(\lambda,h) = \sum_{i\geq3}\frac{1}{i!}\frac{\partial^iE}{\partial h^i}\left(\frac{1}{2},0\right)h^i+\sum_{i\geq2}\sum_{j\geq1}\frac{1}{i!j!}\frac{\partial^{i+j}E}{\partial h^i\partial \lambda^j}\left(\frac{1}{2},0\right)h^i\left(\lambda-\frac{1}{2}\right)^j.
\end{equation}
We search for the existence of a solution of the form $\lambda(h)=\frac{1}{2}+\varepsilon(h)h$, where $\varepsilon(h)$ is a real-valued function of $h$. Substituting $\lambda=\frac{1}{2}+\varepsilon h$ into \eqref{eq-4-6} gives
\begin{equation*}
E(\lambda,h) = \sum_{i\geq3}\frac{1}{i!}\frac{\partial^iE}{\partial h^i}\left(\frac{1}{2},0\right)h^i+\sum_{i\geq2}\sum_{j\geq1}\frac{1}{i!j!}\frac{\partial^{i+j}E}{\partial h^i\partial \lambda^j}\left(\frac{1}{2},0\right)h^{i+j}\varepsilon^j.
\end{equation*}
When $h\neq0$, $E(\lambda,h)=0$ is equivalent to $\bar{E}(\varepsilon,h)=0$, where
\begin{equation}\label{eq-4-7}
\bar{E}(\varepsilon,h)=\frac{1}{6}\frac{\partial^3E}{\partial h^3}\left(\frac{1}{2},0\right)+\frac{1}{2}\frac{\partial^3E}{\partial h^2\partial \lambda}\left(\frac{1}{2},0\right)\varepsilon+\mathcal{O}(\varepsilon h)+\mathcal{O}(h).
\end{equation}
Let the initial point
\begin{equation}\label{eq-4-8}
\varepsilon_0=-\frac{1}{3}\frac{\partial^3E}{\partial h^3}\left(\frac{1}{2},0\right)\Big/\frac{\partial^3E}{\partial h^2\partial \lambda}\left(\frac{1}{2},0\right),\quad h_0=0,
\end{equation}
then $\bar{E}(\varepsilon_0,h_0)=0$. Moreover, the equation \eqref{eq-4-5} shows that $\frac{\partial^3E}{\partial h^3}\left(\frac{1}{2},0\right)$ and $\frac{\partial^3E}{\partial h^2\partial \lambda}\left(\frac{1}{2},0\right)$ are not equal to zero. According to
\begin{equation*}
\frac{\partial \bar{E}}{\partial \varepsilon}(\varepsilon_0,h_0)=\frac{1}{2}\frac{\partial^3E}{\partial h^2\partial \lambda}\left(\frac{1}{2},0\right)\neq0,
\end{equation*}
employing the implicit function theorem to \eqref{eq-4-7} reports that the existence and uniqueness of $\varepsilon=\varepsilon(h)$ in $h \in (-h_0,h_0)$ with $h_0$ small enough are guaranteed, which reads
\begin{equation}\label{eq-4-9}
\varepsilon(h)=\varepsilon_0+\mathcal{O}(h)=\mbox{constant}+\mathcal{O}(h)
\end{equation}
ensuring that $\bar{E}(\varepsilon(h),h)=0$ holds. Therefore, the existence of $\lambda(h)=\frac{1}{2}+\varepsilon(h)h$ around $\frac{1}{2}$ is obtained to make Scheme \Rmnum{4} solvable. In addition, using the notations \eqref{eq-3-1} and \eqref{eq-3-2}, one can derive 
\begin{equation}\label{eq-4-10}
\Psi_h^\lambda(p_n,q_n)-\Phi_h^{\lambda}(p_n,q_n)=C_1(\lambda)h^l+\mathcal{O}(h^{l+1}), \quad l\geq2,
\end{equation}
where $l\geq3$ for $\lambda=\frac{1}{2}$. In this sense, the Hamiltonian error of Scheme \Rmnum{5} reads
\begin{equation}\label{eq-4-12}
E(\lambda,h)=H\big(\Phi_h^{\lambda}(p_n,q_n)+C_1(\lambda)h^l+\mathcal{O}(h^{l+1})\big)-H(p_n,q_n).
\end{equation}
The above derivation is valid for verifying the solvability of Scheme \Rmnum{5}. Similarly, we can obtain the same conclusions for higher order EQUIP schemes.
\end{proof}

In the sense of Theorem \ref{Th-4-1}, Schemes \Rmnum{4} and \Rmnum{5} can be regarded as a small perturbation of the midpoint method, which obtains energy conservation. The efficient solution of \eqref{eq-4-3} will be the part of future investigations. In the numerical tests, we use a standard solver for parameterized symplectic schemes coupled with the secant iteration for determining the parameter $\lambda$ that achieves energy conservation.

\section{Numerical experiments}\label{Sec-5} 
In this section, we present two numerical examples to confirm the theoretical analysis and conservation of the proposed schemes. The symplectic schemes with respect to four different parameter values are tested. In order to compare with the EQUIP methods, we calculate numerical solutions by the AVF method \cite{quispel-08-AVF}. The time step is set to $h=0.02$. The convergence rates of all the schemes are obtained by
\begin{equation*}
\mbox{Order} = \log_2 \frac{\mbox{Error}_h}{\mbox{Error}_{h/2}},\ \mbox{Error}_h = ||y_h(1)-y_{h/2}(1)||_\infty,
\end{equation*}
where $y_h(1)$ represents the numerical solutions of corresponding schemes at time $t=1$.

\subsection{The H\'{e}non-Heiles model}\label{subS-5-1}
Our first experiment is the H\'{e}non-Heiles model that originates from a problem in celestial mechanics. After a reduction of the dimension, this model is equivalent to the motion of a test particle with unit mass subject to an arbitrary potential $U(q_1,q_2)$ in the $(q_1,q_2)$-plane. The dynamics is described by a Hamiltonian of the form
\begin{equation}\label{eq-5-1}
H(p_1,p_2,q_1,q_2) = \frac{1}{2}(p_1^2+p_2^2)+U(q_1,q_2).
\end{equation}
The potential $U(q_1,q_2) = \frac{1}{2}(q_1^2+q_2^2+2 q_1^2 q_2-\frac{2}{3}q_2^3)$ is chosen for this experiment.

The particular potential $U=\frac{1}{6}$ serving as the critical energy of this model consists of three straight lines forming an equilateral triangle, whose vertices are saddle points of $U$ as plotted in Fig. \ref{Fig-1}. For values of energy $H<\frac{1}{6}$, the equipotential curves of the system are close thus making escape impossible. However, for larger energy levels ($H>\frac{1}{6}$), the equipotential curves open and three exit channels appear through which the test particles may escape to infinity \cite{zotos-15-HH}. The two classical orbits of the H\'{e}non-Heiles model are simulated with the initial values as
\begin{enumerate}
\item Box orbits (see the first row of Fig. \ref{Fig-1}):
\[H_0=0.02,\ p_2(0)=0,\ q_1(0)=0,\ q_2(0)=-0.082;\]
\item Chaotic orbits (see the second row of Fig. \ref{Fig-1}):
\[H_0=\frac{1}{6},\ p_2(0)=0,\ q_1(0)=0,\ q_2(0)=0.82.\]
\end{enumerate}
The values of $p_1(0)$ are found from the Hamiltonian \eqref{eq-5-1}.

The box orbits emerging with the internal shape of an equilateral triangle are presented in Fig. \ref{Fig-1}. Under the critical energy, Schemes \Rmnum{1}--\Rmnum {3} depict that the numerical trajectories never escapes to the outside of the equilateral triangle. The orbits of three symplectic schemes are similar, so we only show the ones of Scheme \Rmnum{1}. The symplecticity of all schemes ensures that the energy error remains bounded and small over long times in Fig. \ref{Fig-2}. The convergence rates of Schemes \Rmnum{1}--\Rmnum {3} are reported in Tab. \ref{Tab-1}. 

The three energy-preserving schemes give the satisfactory orbits in Fig. \ref{Fig-3}. Fig. \ref{Fig-4} presents the energy error and the value distribution of $\lambda$ in Schemes \Rmnum{4} and \Rmnum{5}. These results illustrate that the parameter $\lambda$ fluctuate around $1/2$ to achieve energy conservation and confirm the theoretical analysis in Theorem \ref{Th-4-1}. The energy errors of Schemes \Rmnum{4} and \Rmnum{5} are smaller than the AVF method and remain bounded over long times. Tab. \ref{Tab-2} exhibits the convergence rates for three energy-preserving schemes.

\begin{figure}[H]
\centering
\subfigure[$\lambda=-1$ (first column)]{
\begin{minipage}[t]{0.23\textwidth}
\centering
\includegraphics[width=40mm]{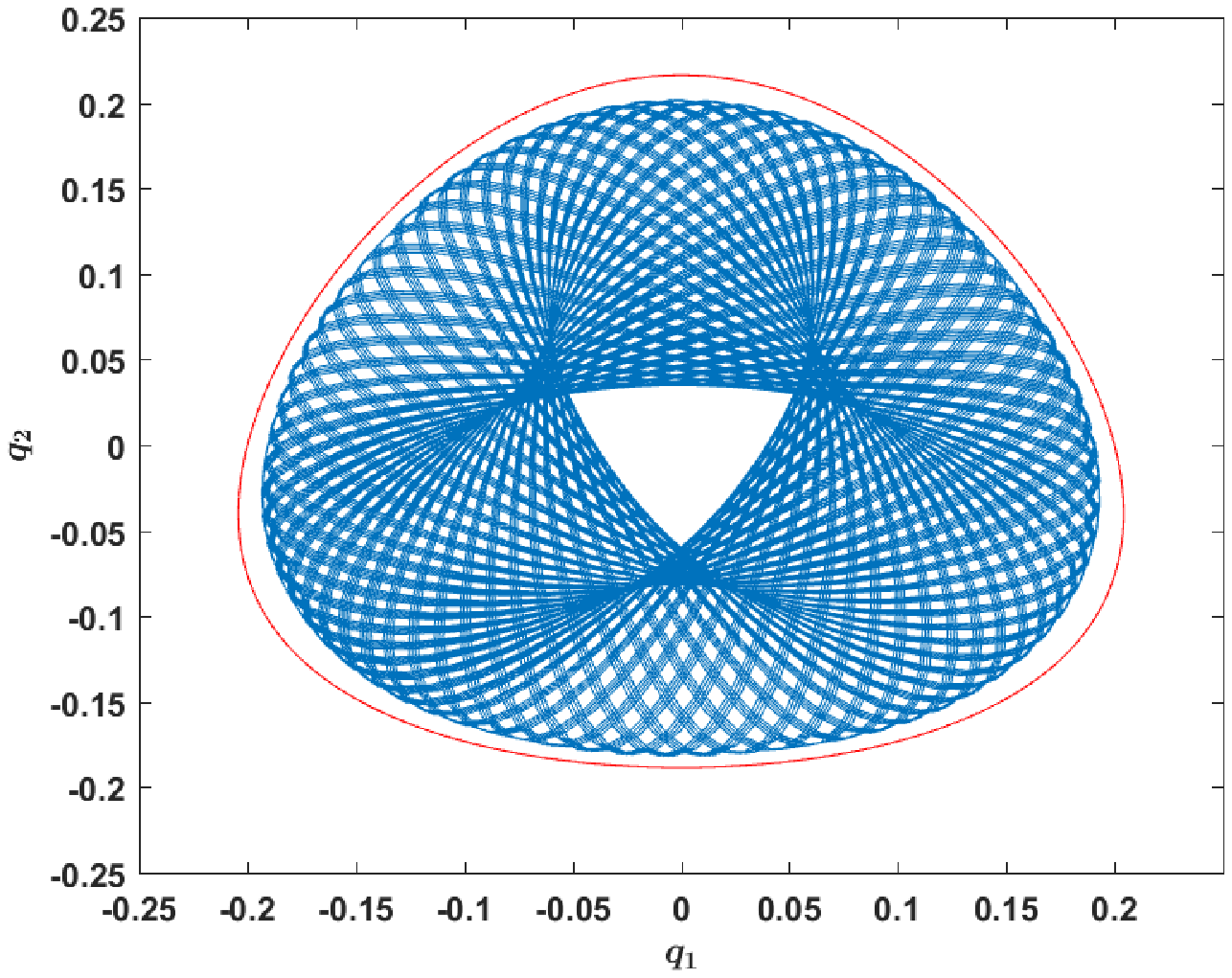}
\includegraphics[width=40mm]{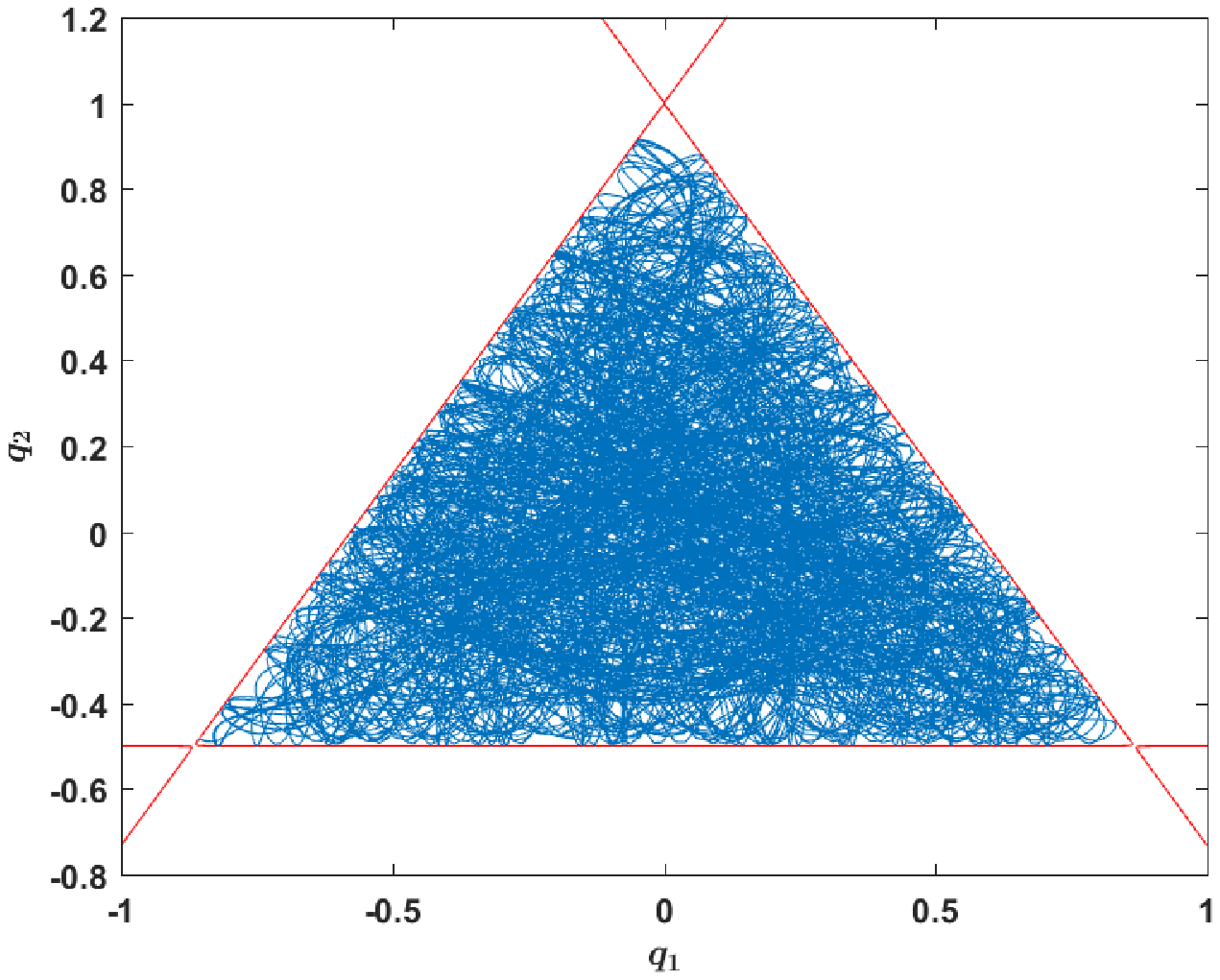}
\end{minipage}}\hspace{-2mm}
\subfigure[$\lambda=\frac{1}{3}$ (second column)]{
\begin{minipage}[t]{0.23\textwidth}
\centering
\includegraphics[width=40mm]{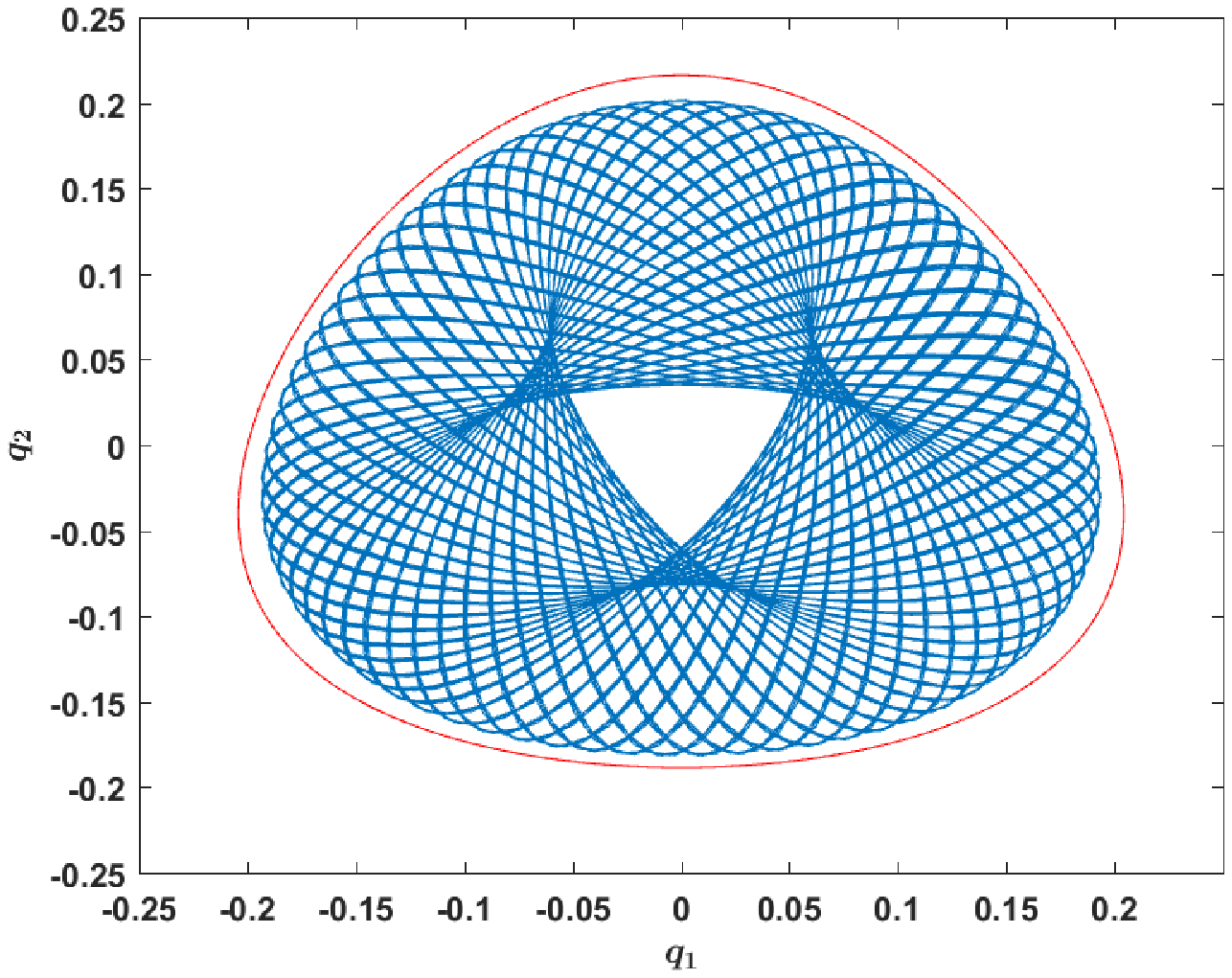}
\includegraphics[width=40mm]{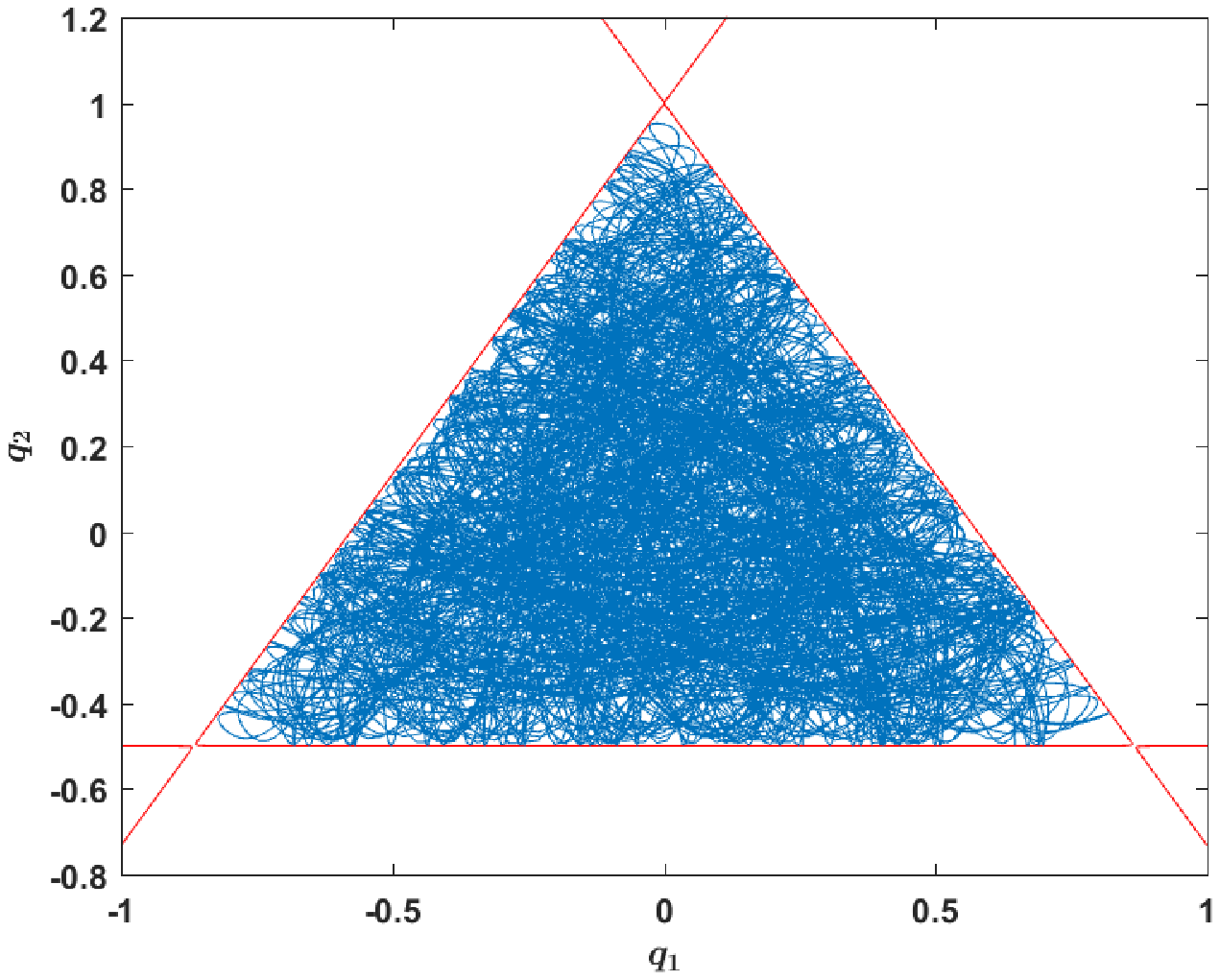}
\end{minipage}}\hspace{-2mm}
\subfigure[$\lambda=\frac{3}{2}$ (third column)]{
\begin{minipage}[t]{0.23\textwidth}
\centering
\includegraphics[width=40mm]{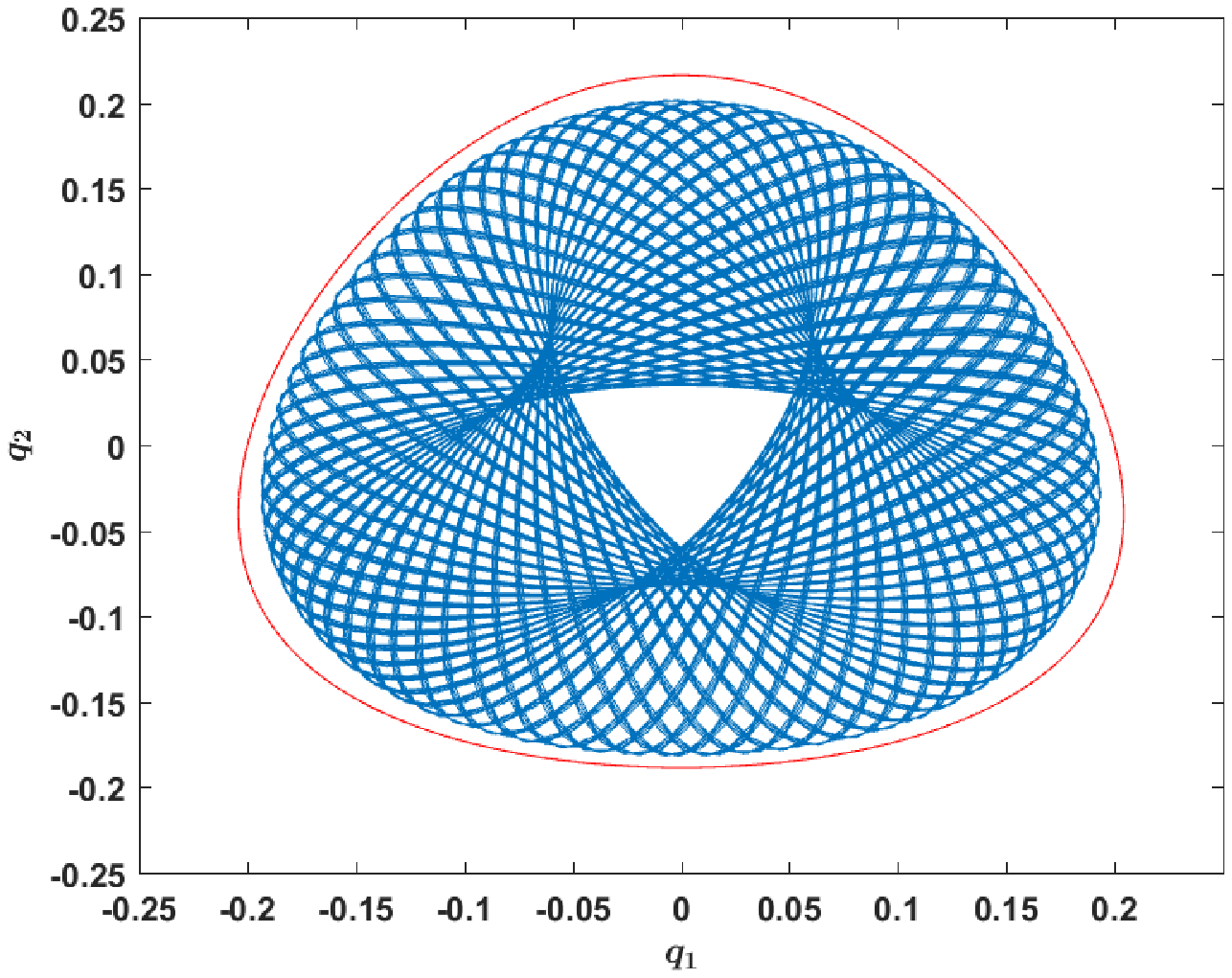}
\includegraphics[width=40mm]{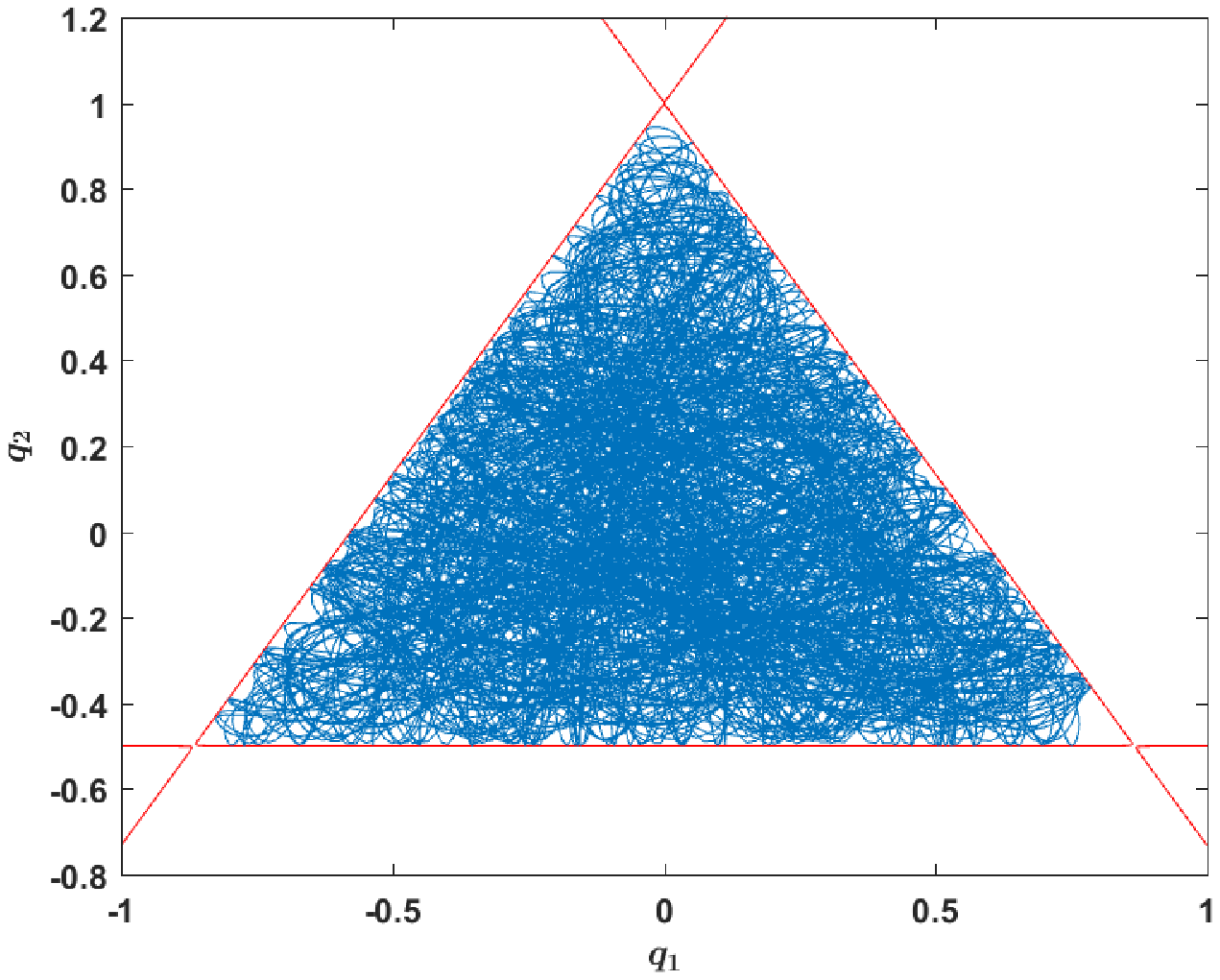}
\end{minipage}}\hspace{-2mm}
\subfigure[$\lambda=2$ (fourth column)]{
\begin{minipage}[t]{0.23\textwidth}
\centering
\includegraphics[width=40mm]{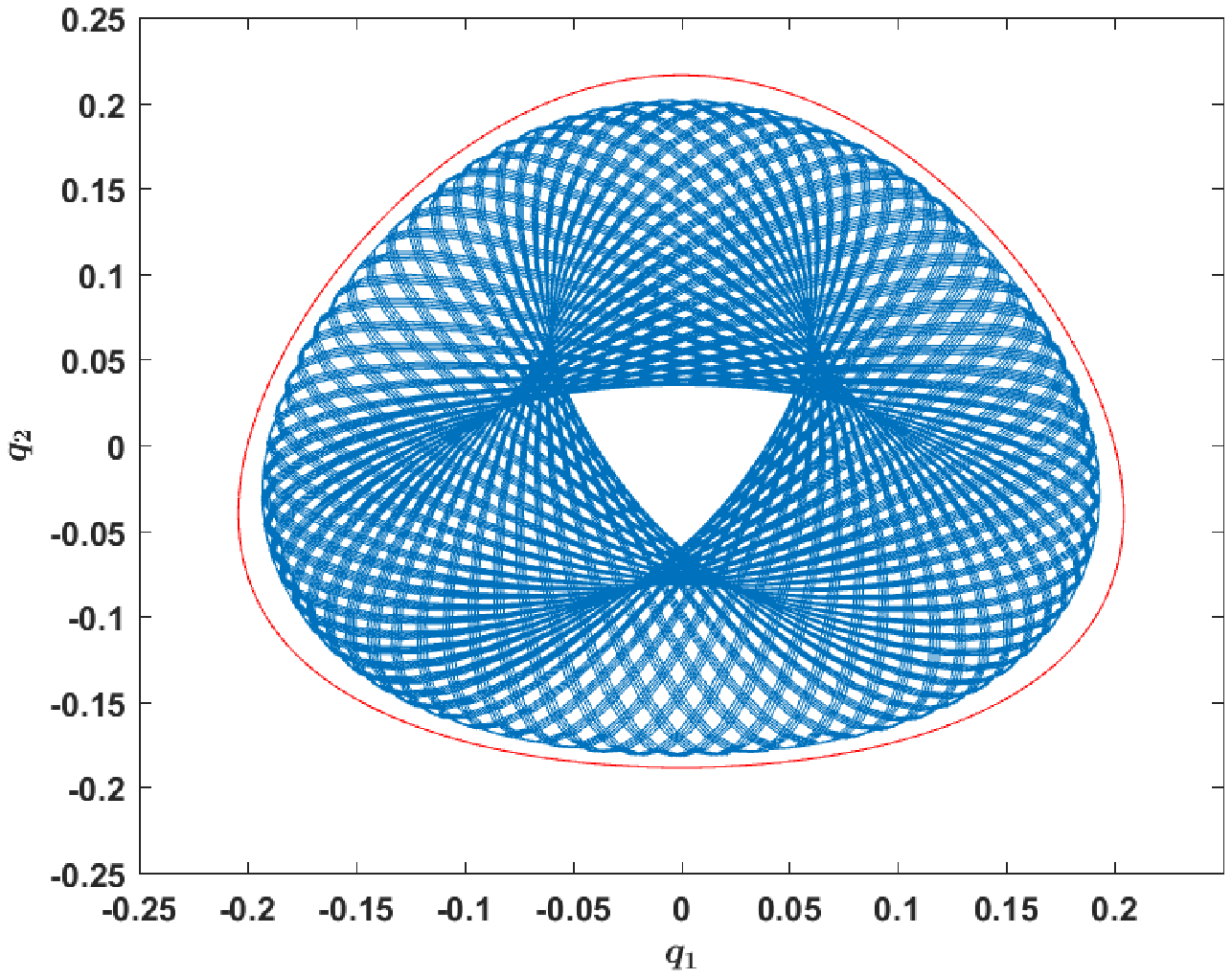}
\includegraphics[width=40mm]{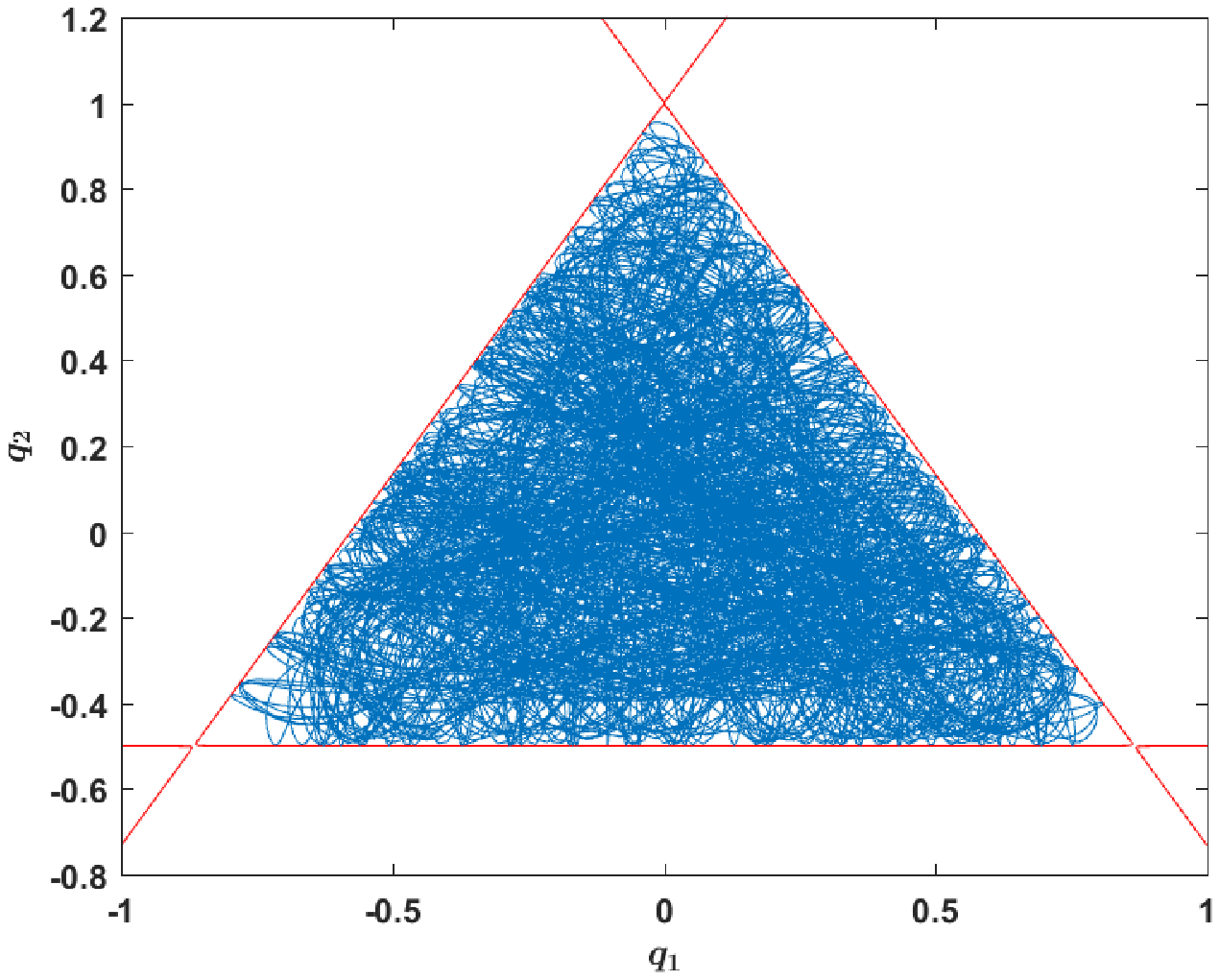}
\end{minipage}}
\caption{Box orbits (till $t = 1000$) and chaotic orbits (till $t = 2000$) of Scheme \Rmnum{1} for the H\'{e}non-Heiles model.}\label{Fig-1}
\end{figure} 

\begin{figure}[H]
\centering
\subfigure[Scheme \Rmnum{1} (first column)]{
\begin{minipage}[t]{0.32\textwidth}
\centering
\includegraphics[width=50mm]{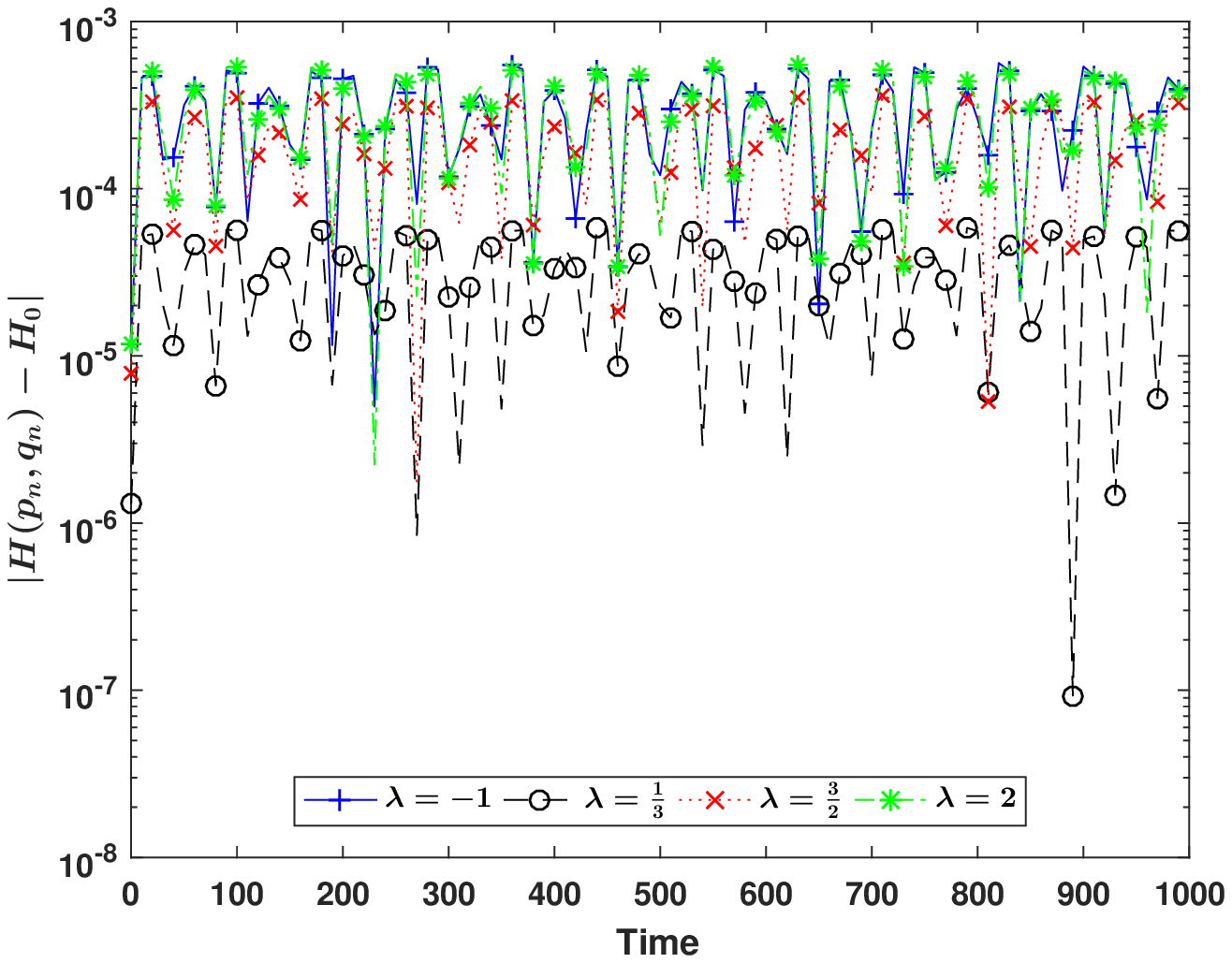}
\includegraphics[width=50mm]{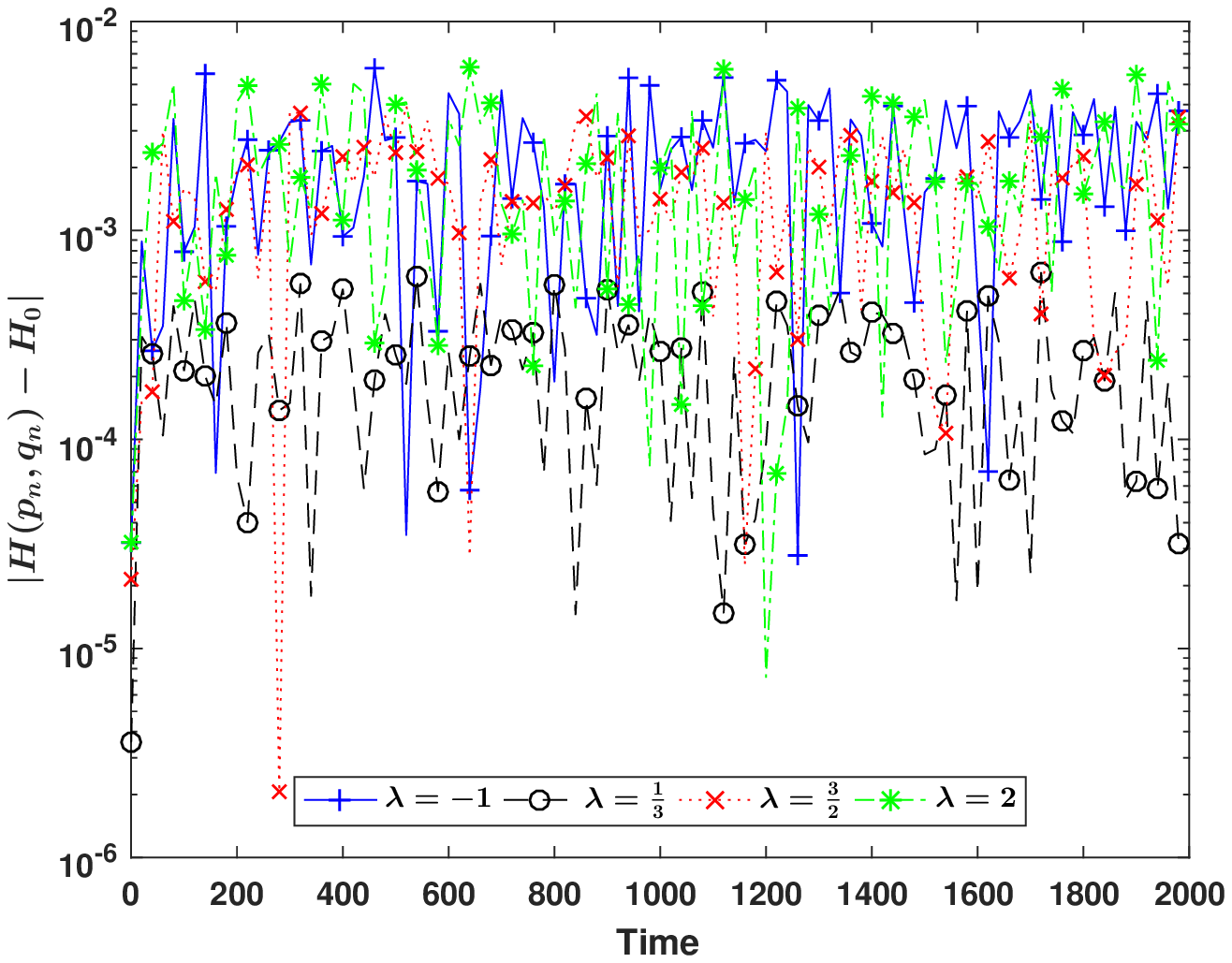}
\end{minipage}}\hspace{-6.5mm}
\subfigure[Scheme \Rmnum{2} (second column)]{
\begin{minipage}[t]{0.32\textwidth}
\centering
\includegraphics[width=50mm]{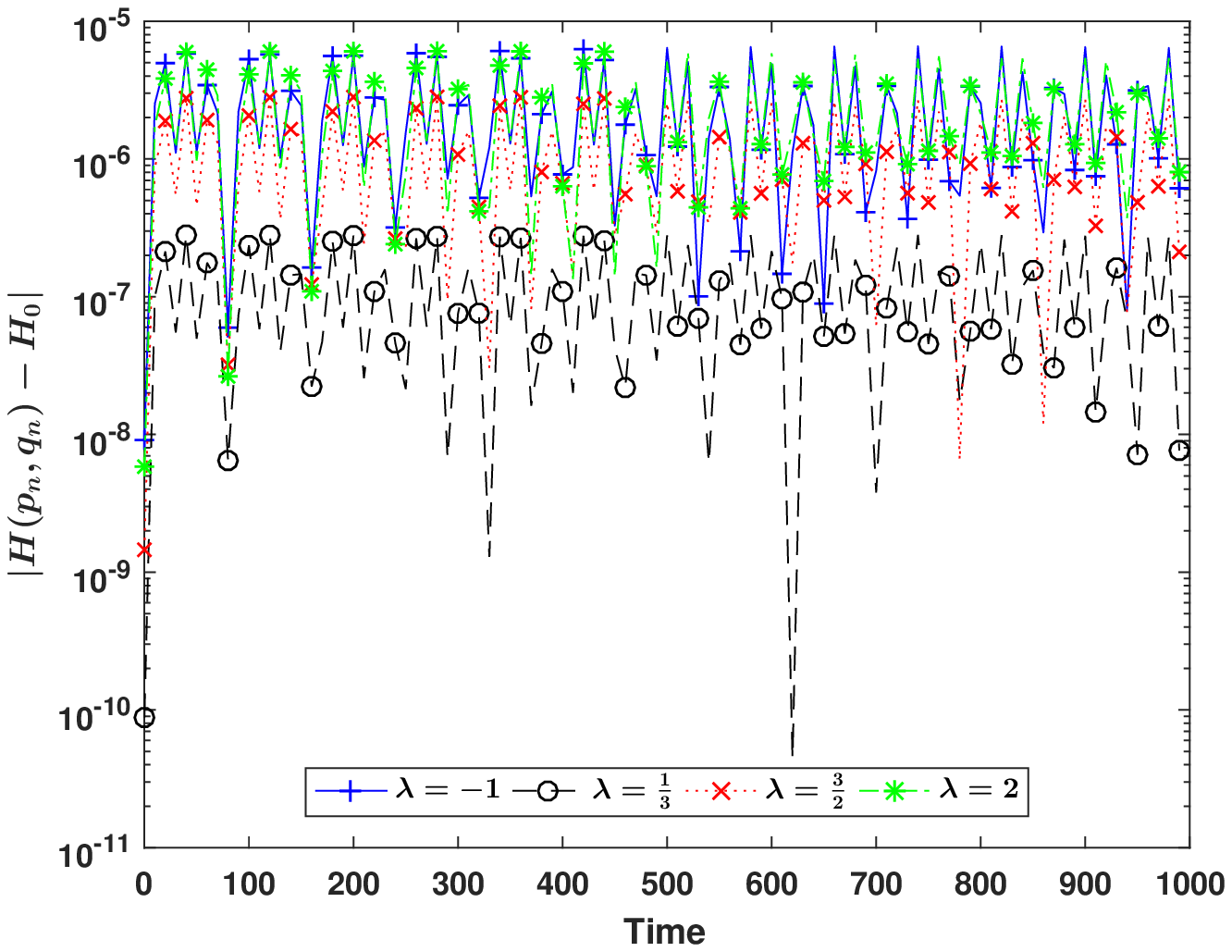}
\includegraphics[width=50mm]{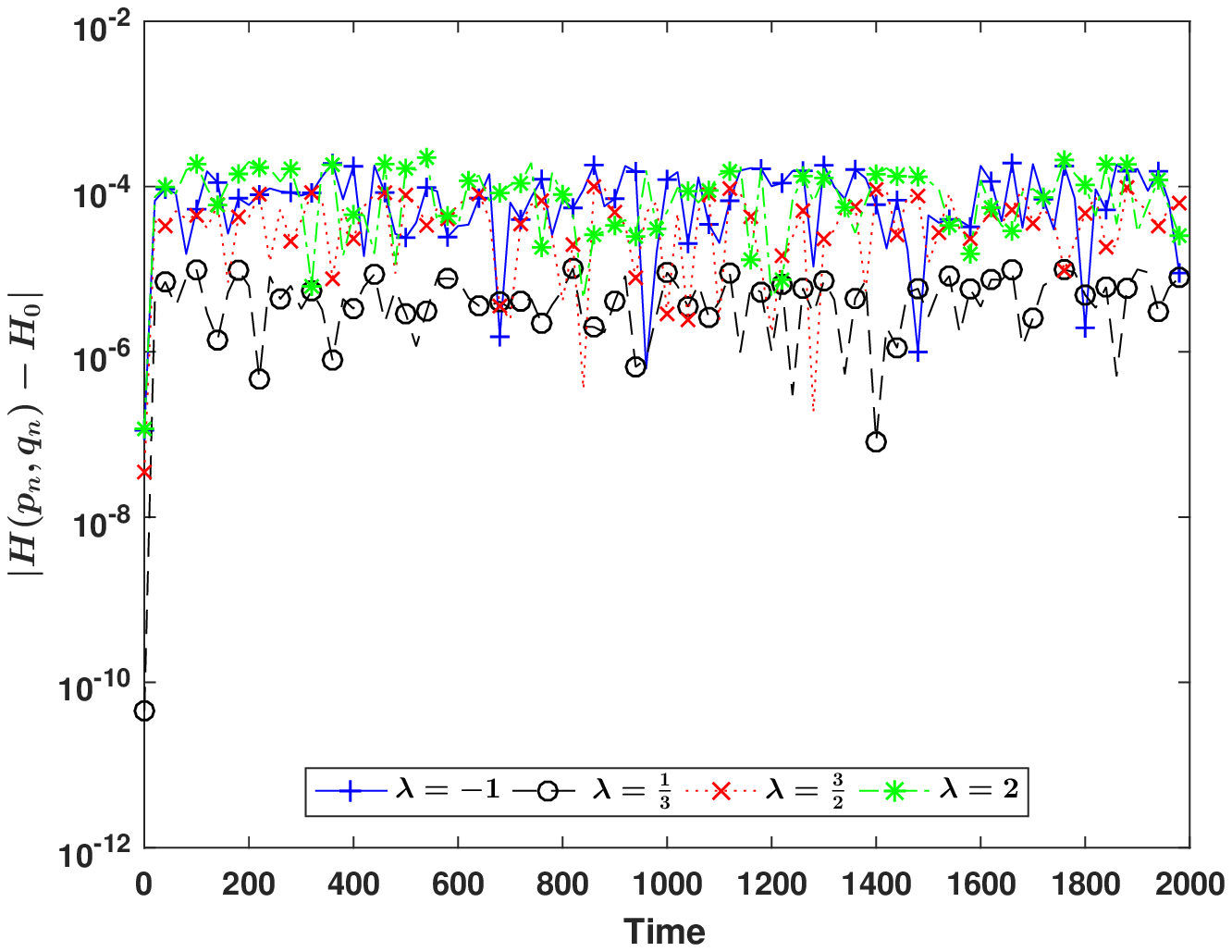}
\end{minipage}}\hspace{-6.5mm}
\subfigure[Scheme \Rmnum{3} (third column)]{
\begin{minipage}[t]{0.32\textwidth}
\centering
\includegraphics[width=50mm]{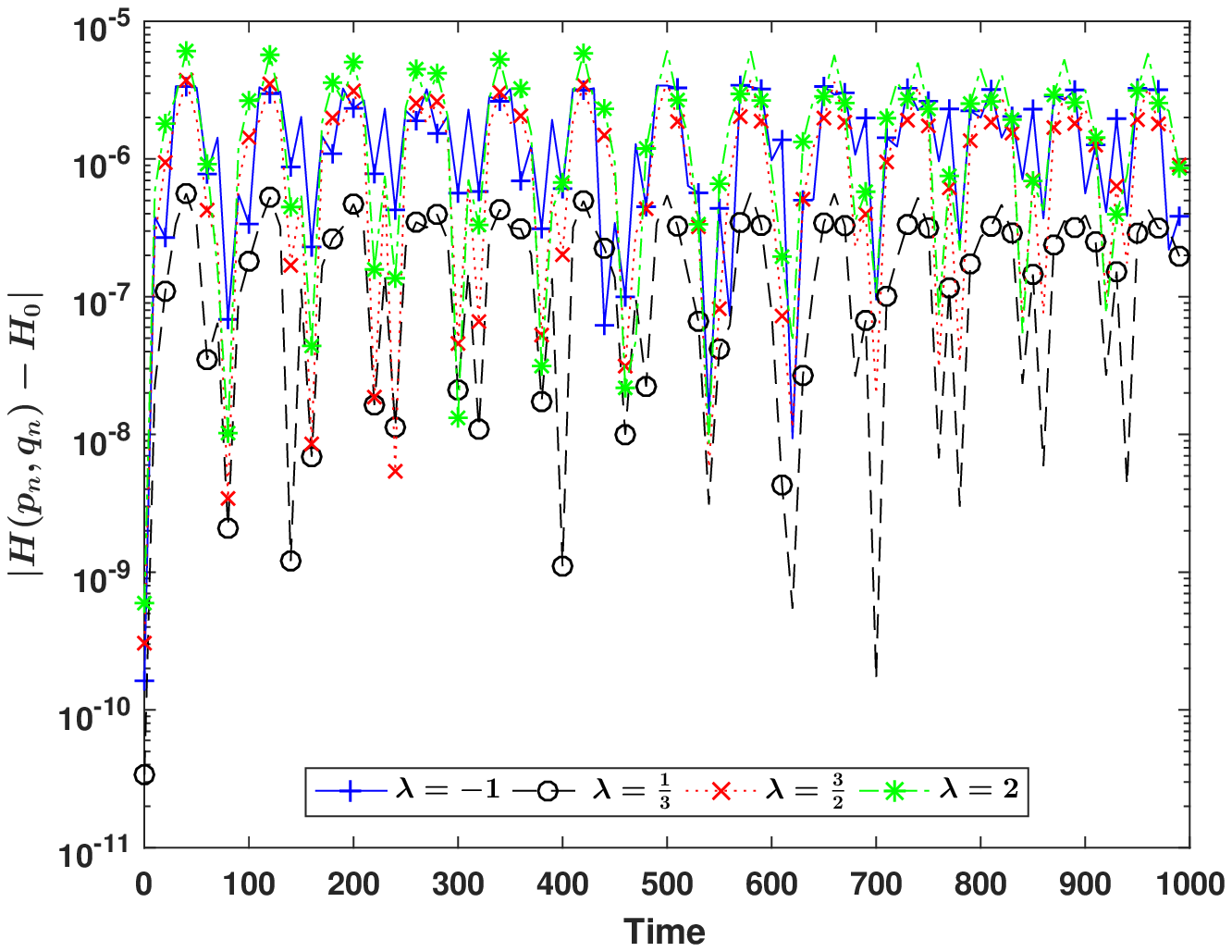}
\includegraphics[width=50mm]{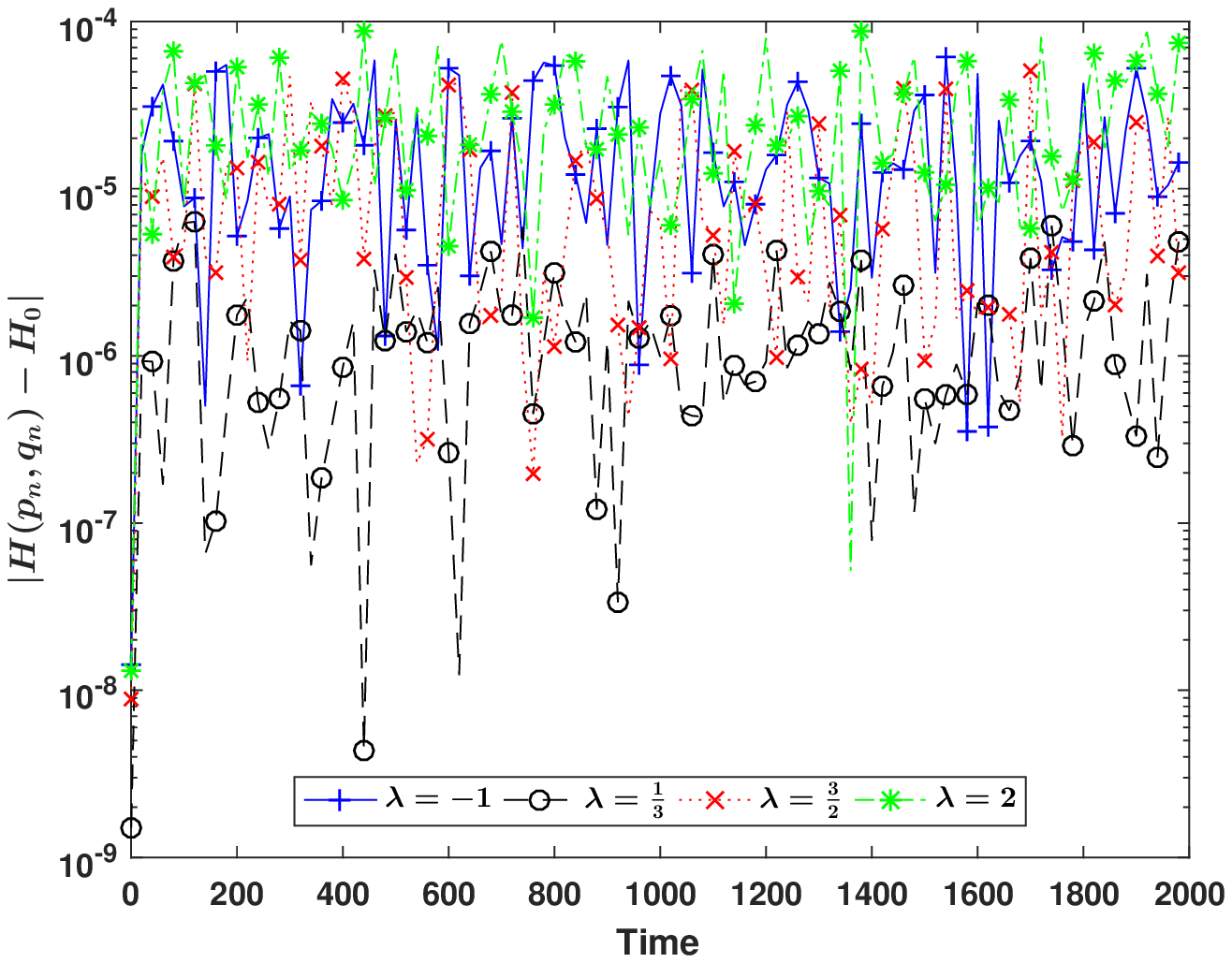}
\end{minipage}}
\caption{Energy conservation of Schemes \Rmnum{1}--\Rmnum{3} for the H\'{e}non-Heiles model with box orbits (first row) and chaotic orbits (second row).}\label{Fig-2}
\end{figure}

\begin{table}[H]
\tabcolsep=8.8pt
\small \renewcommand \arraystretch{1.25} \centering
\caption{Convergence tests of Schemes \Rmnum{1}--\Rmnum{3} for the H\'{e}non-Heiles model with chaotic orbits.}\label{Tab-1} 
\begin{tabularx}{\textwidth}{*{10}{l}} \toprule
\multirow{2}*{Scheme} & \multirow{2}*{$h$} & \multicolumn{2}{c}{$\lambda=-1$} & \multicolumn{2}{c}{$\lambda=\frac{1}{3}$} & \multicolumn{2}{c}{$\lambda=\frac{3}{2}$} & \multicolumn{2}{c}{$\lambda=2$}\\
\cmidrule(lr){3-4}\cmidrule(lr){5-6}\cmidrule(lr){7-8}\cmidrule(lr){9-10}
&& Error & Order & Error & Order & Error & Order & Error & Order\\ \midrule
\multirow{4}*{\centering \Rmnum{1}}
& 0.02/2 & 4.31E-03 & - & 4.38E-04 & - & 2.63E-03 & - & 3.85E-03 & -\\
& 0.02/4 & 2.09E-03 & 1.04 & 2.23E-04 & 0.98 & 1.34E-03 & 0.98 & 1.98E-03 & 0.96\\
& 0.02/8 & 1.03E-03 & 1.02 & 1.12E-04 & 0.99 & 6.73E-04 & 0.99 & 1.00E-03 & 0.98\\
& 0.02/16 & 5.13E-04 & 1.01 & 5.63E-05 & 0.99 & 3.38E-04 & 0.99 & 5.06E-04 & 0.99\\ \midrule
\multirow{4}*{\Rmnum{2}}
& 0.02/2 & 5.09E-04 & - & 2.35E-05 & - & 2.24E-04 & - & 4.77E-04 & -\\
& 0.02/4 & 1.25E-04 & 2.02 & 5.87E-06 & 2.00 & 5.67E-05 & 1.98 & 1.21E-04 & 1.98\\
& 0.02/8 & 3.11E-05 & 2.01 & 1.47E-06 & 2.00 & 1.42E-05 & 1.99 & 3.06E-05 & 1.99\\
& 0.02/16 & 7.74E-06 & 2.01 & 3.67E-07 & 2.00 & 3.57E-06 & 2.00 & 7.68E-06 & 1.99\\ \midrule
\multirow{4}*{\Rmnum{3}}
& 0.02/2 & 1.12E-04 & - & 3.54E-06 & - & 5.04E-05 & - & 1.18E-04 & -\\
& 0.02/4 & 2.79E-05 & 2.00 & 8.84E-07 & 2.00 & 1.26E-05 & 2.00 & 2.94E-05 & 2.00\\
& 0.02/8 & 6.97E-06 & 2.00 & 2.21E-07 & 2.00 & 3.15E-06 & 2.00 & 7.34E-06 & 2.00\\
& 0.02/16 & 1.74E-06 & 2.00 & 5.52E-08 & 2.00 & 7.88E-07 & 2.00 & 1.84E-06 & 2.00\\ \bottomrule
\end{tabularx}
\end{table}
\vspace{-2.5mm}
\begin{figure}[H]
\centering
\subfigure[AVF (first column)]{
\begin{minipage}[t]{0.32\textwidth}
\centering
\includegraphics[width=50mm]{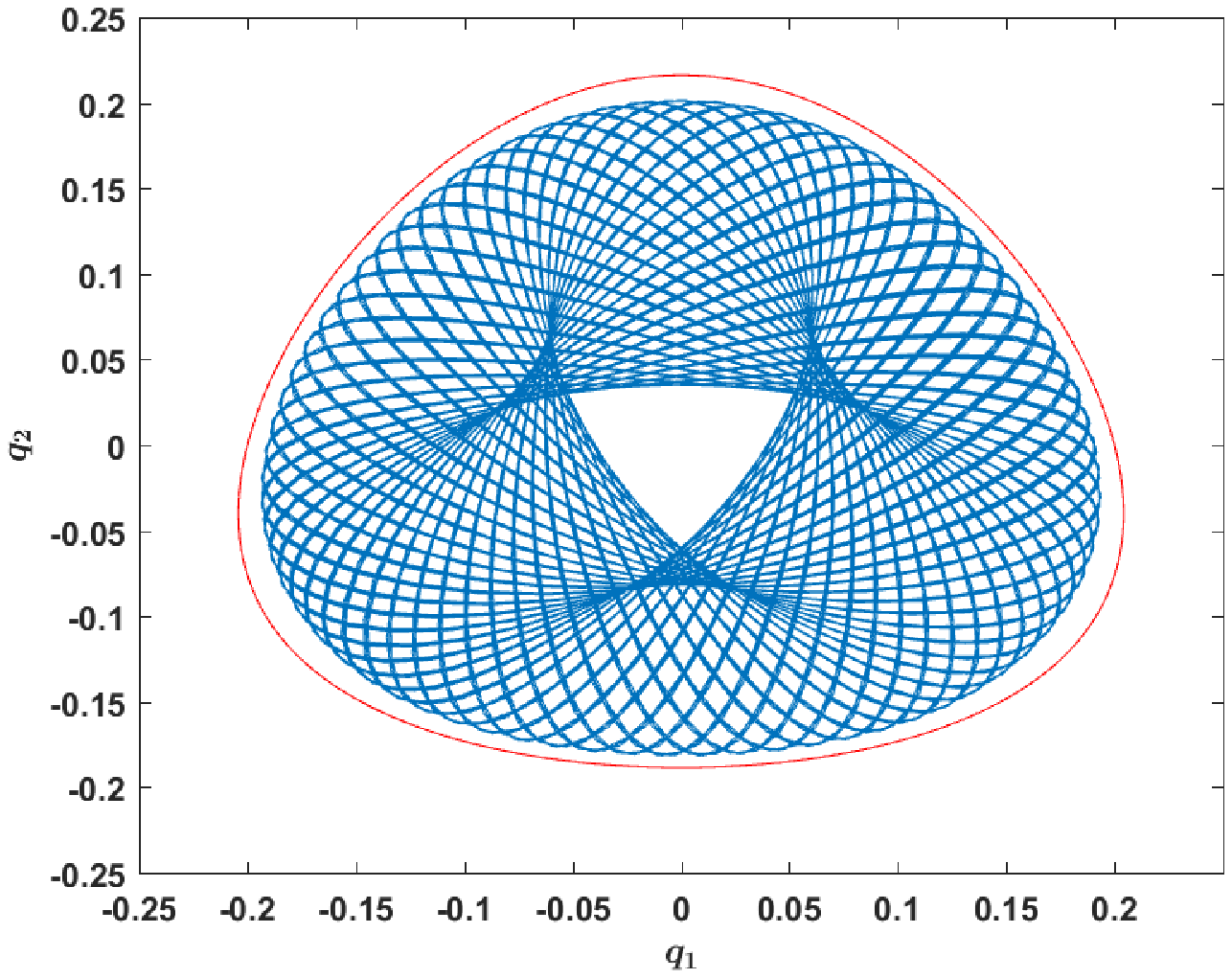}
\includegraphics[width=50mm]{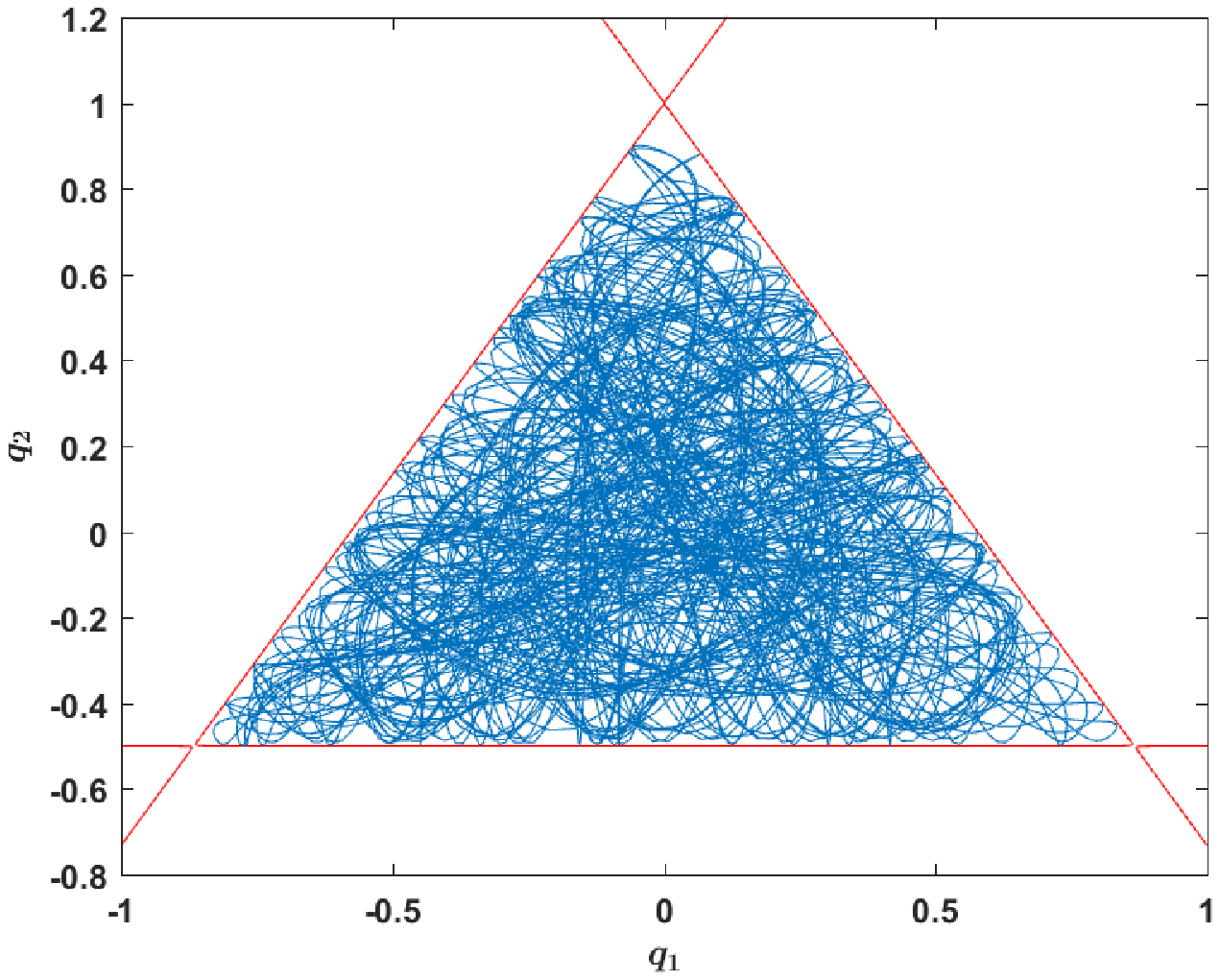}
\end{minipage}}\hspace{-6.5mm}
\subfigure[Scheme \Rmnum{4} (second column)]{
\begin{minipage}[t]{0.32\textwidth}
\centering
\includegraphics[width=50mm]{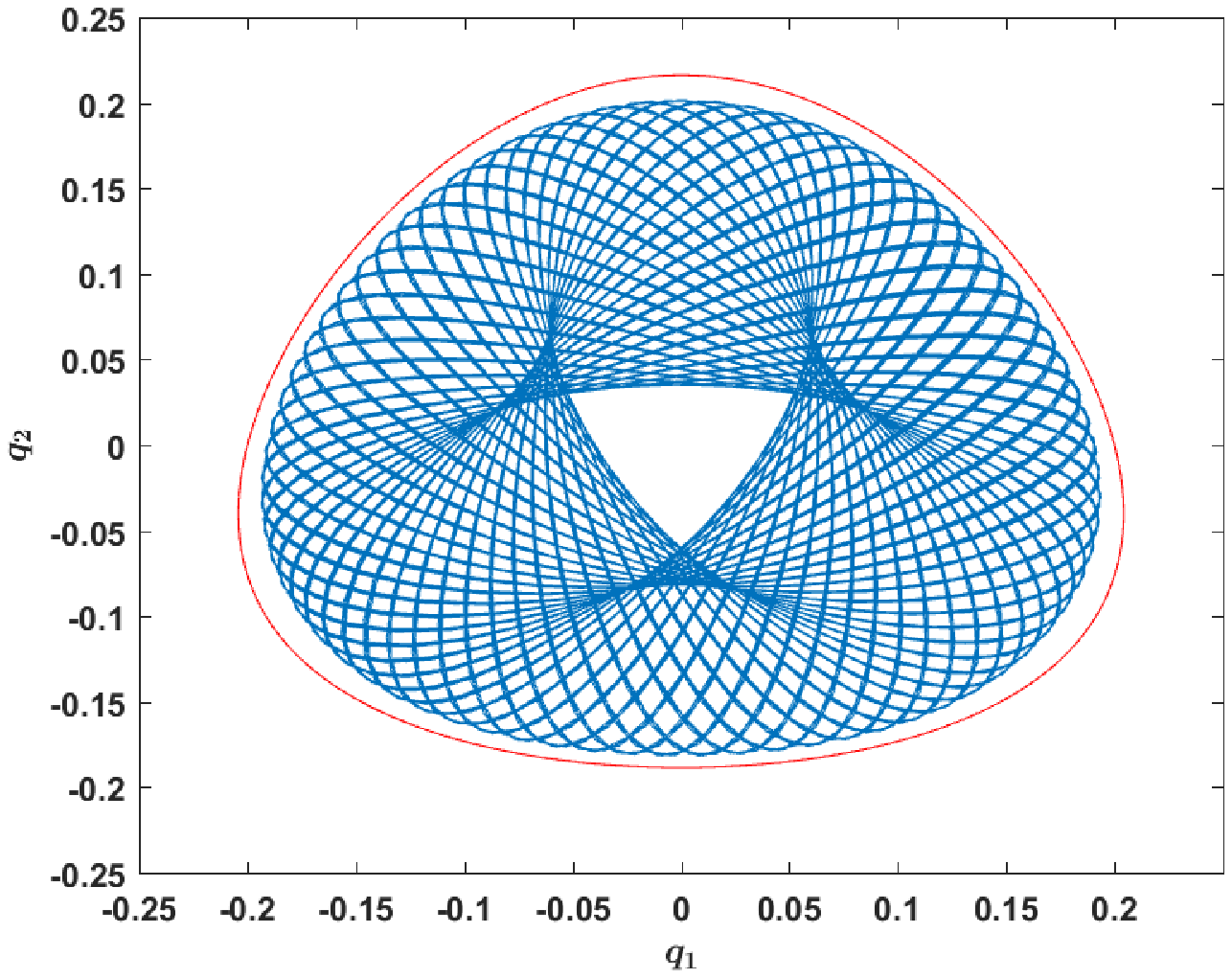}
\includegraphics[width=50mm]{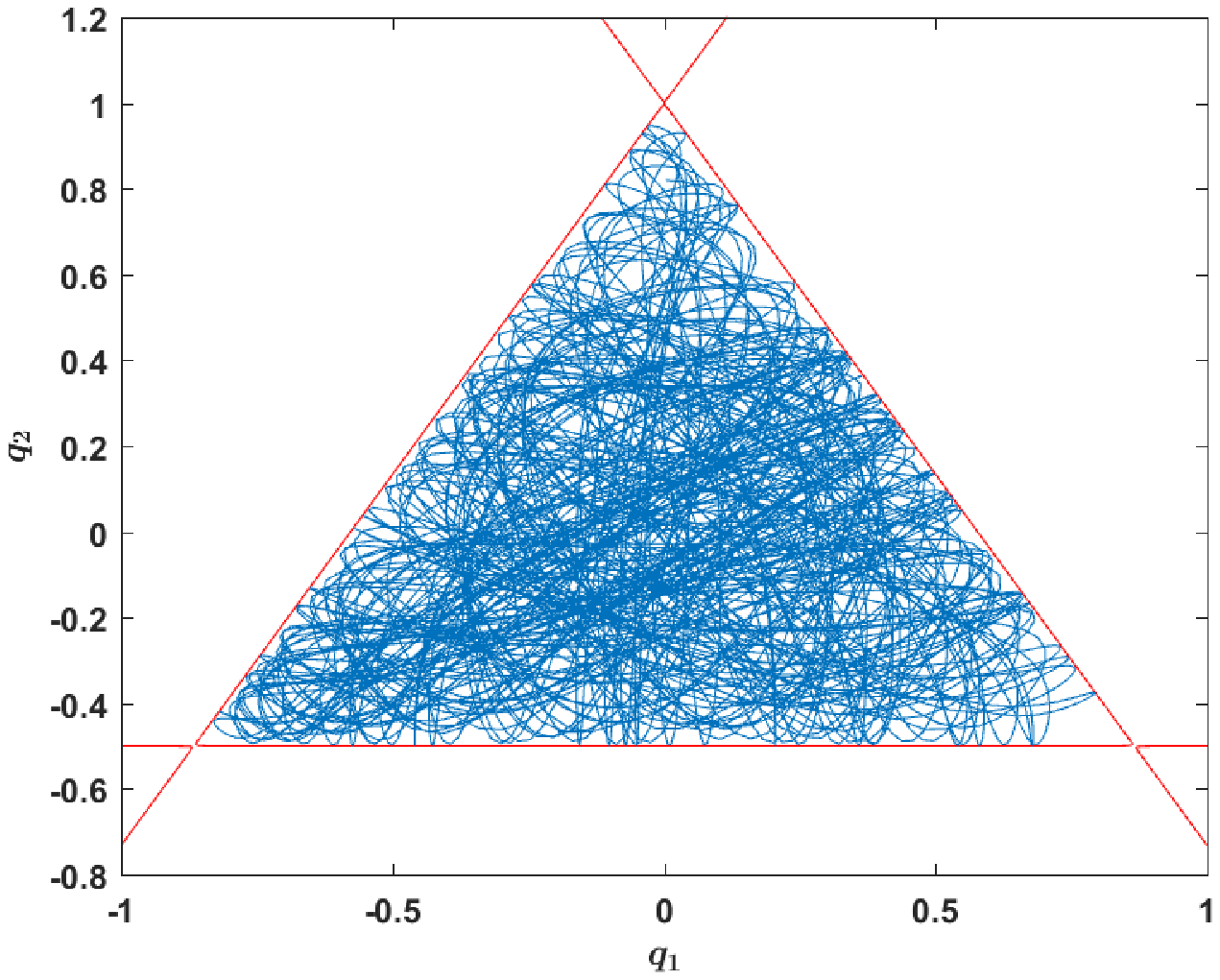}
\end{minipage}}\hspace{-6.5mm}
\subfigure[Scheme \Rmnum{5} (third column)]{
\begin{minipage}[t]{0.32\textwidth}
\centering
\includegraphics[width=50mm]{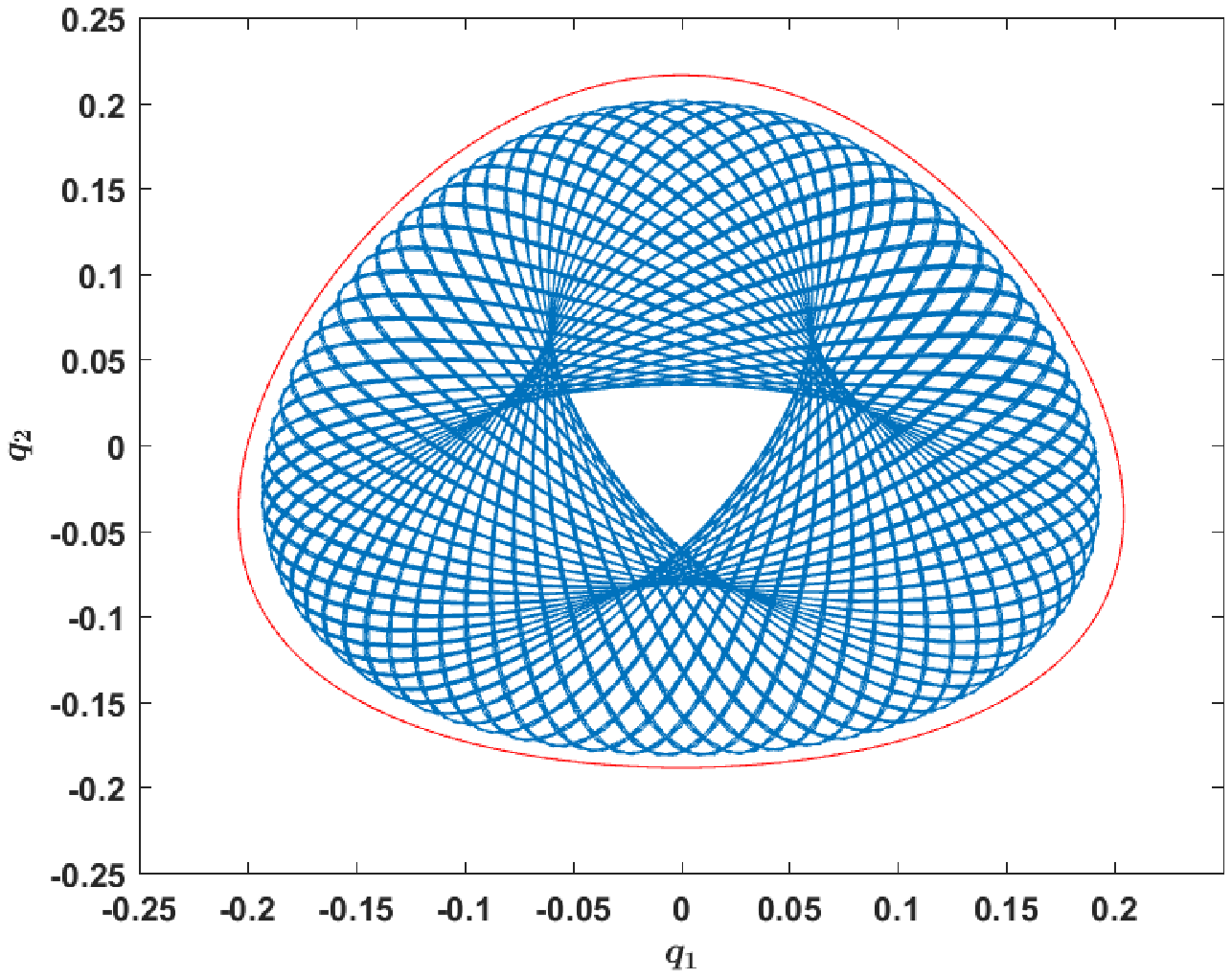}
\includegraphics[width=50mm]{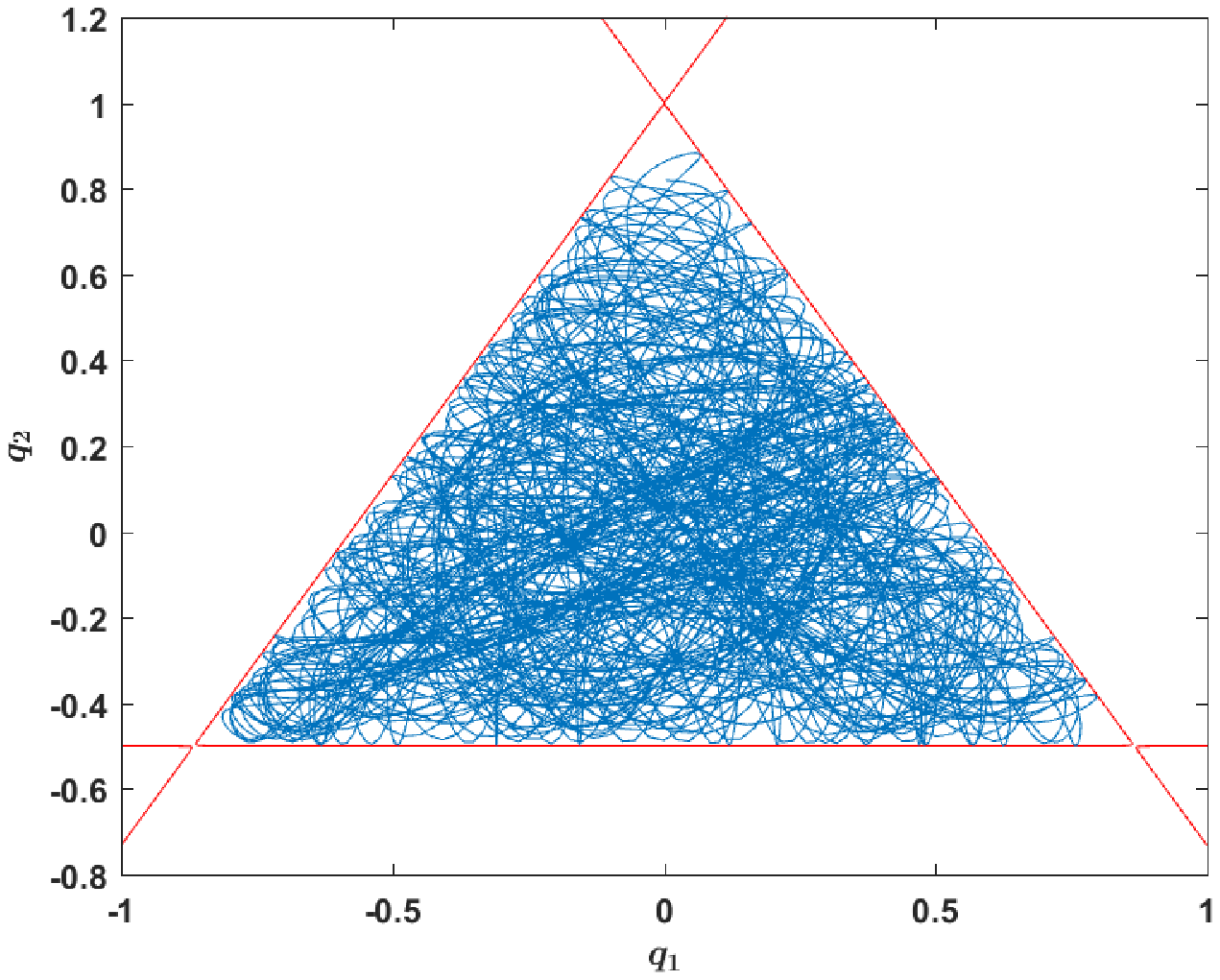}
\end{minipage}}\vspace{-2mm}
\caption{Box orbits (first row) and chaotic orbits (second row) of three energy-preserving schemes for the H\'{e}non-Heiles model till $t=1000$.}\label{Fig-3}
\end{figure}
\vspace{-2.5mm}
\begin{table}[H]
\tabcolsep=19pt
\small \renewcommand\arraystretch{1.25} \centering
\caption{Convergence tests of three energy-preserving schemes for the H\'{e}non-Heiles model with chaotic orbits.}\label{Tab-2}
\begin{tabularx}{\textwidth}{*{7}{l}} \toprule
\multirow{2}*{$h$} & \multicolumn{2}{c}{AVF} & \multicolumn{2}{c}{Scheme \Rmnum{4}} & \multicolumn{2}{c}{Scheme \Rmnum{5}}\\
\cmidrule(lr){2-3}\cmidrule(lr){4-5}\cmidrule(lr){6-7}
& Error & Order & Error & Order & Error & Order\\ \midrule
0.02/2 & 1.78E-05 & - & 2.50E-03 & - & 4.23E-06 & -\\
0.02/4 & 4.46E-06 & 2.00 & 1.24E-03 & 1.01 & 1.06E-06 & 2.00\\
0.02/8 & 1.11E-06 & 2.00 & 6.22E-04 & 0.99 & 2.64E-07 & 2.00\\
0.02/16 & 2.78E-07 & 2.00 & 3.10E-04 & 1.00 & 6.60E-08 & 2.00\\
\bottomrule
\end{tabularx}
\end{table}

\begin{figure}[H]
\centering
\includegraphics[width=60mm]{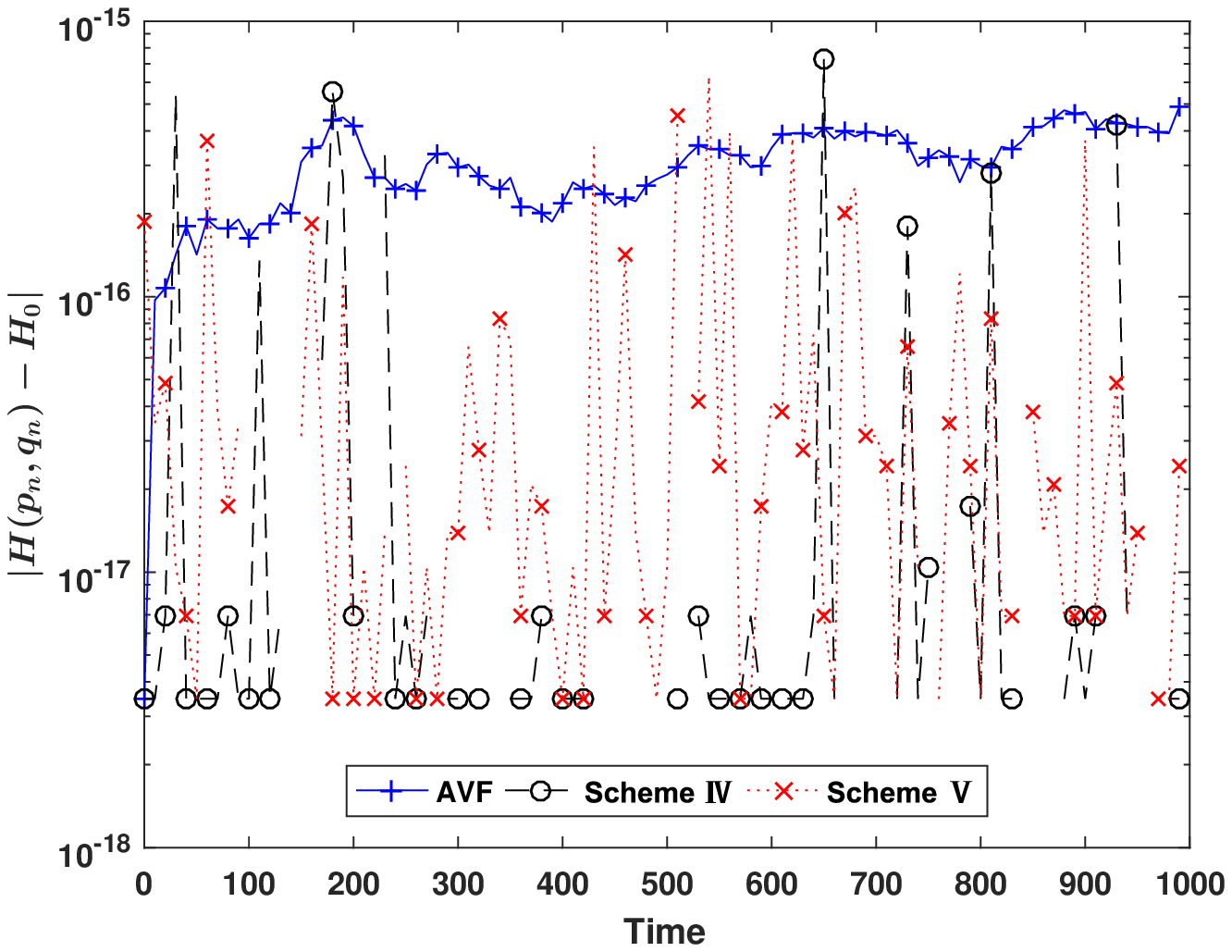}\hspace{-4.5mm}
\includegraphics[width=60mm]{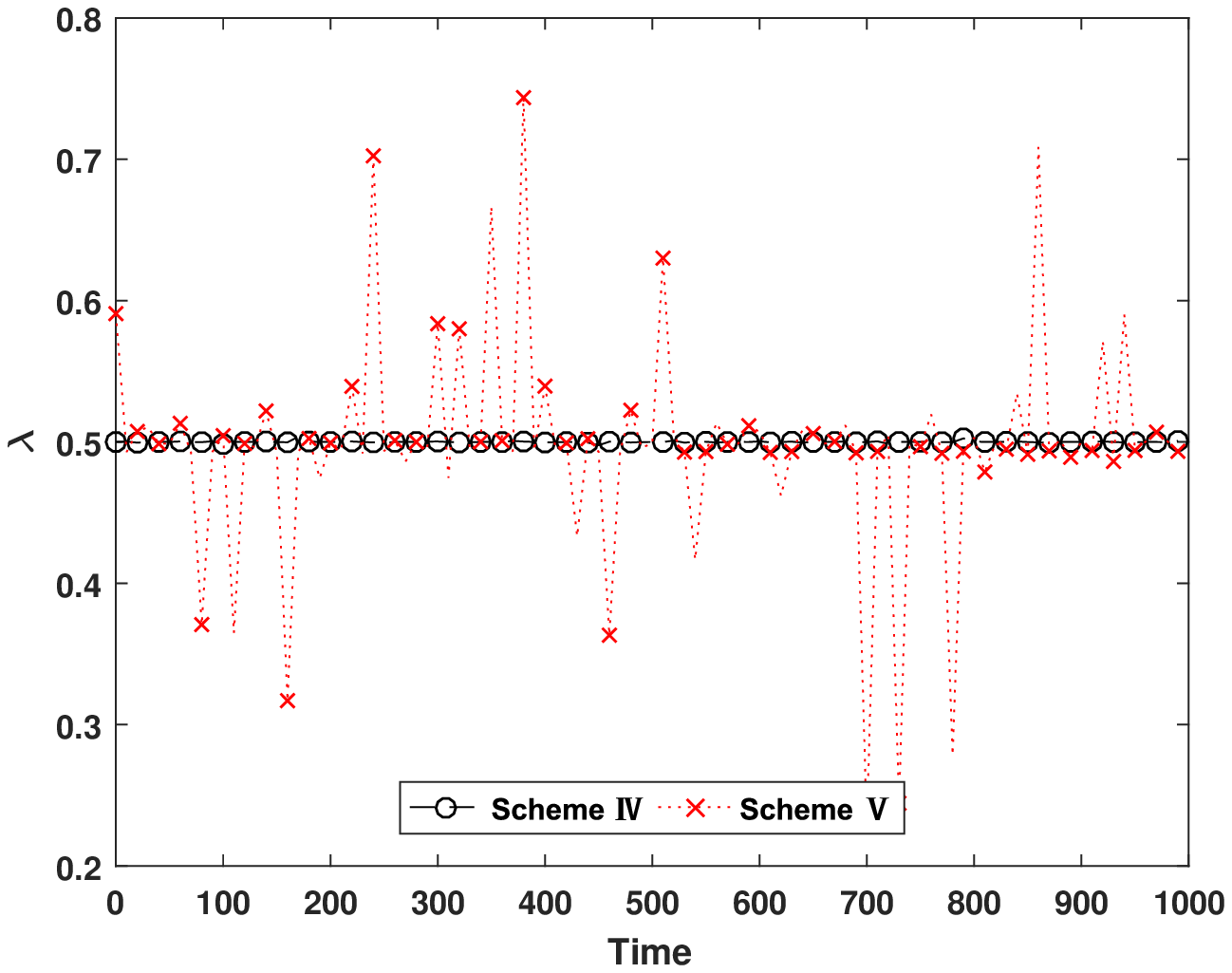}
\includegraphics[width=60mm]{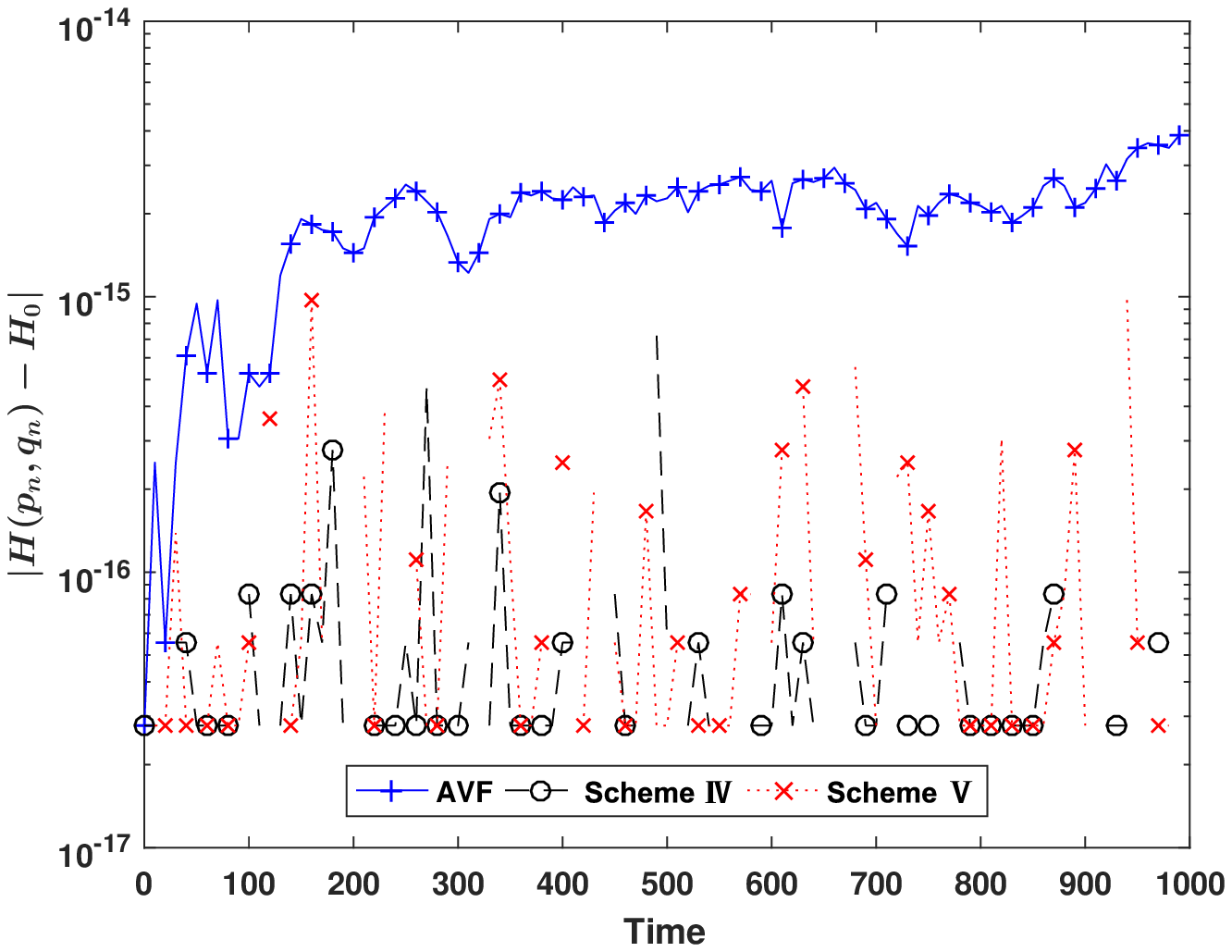}\hspace{-4.5mm}
\includegraphics[width=60mm]{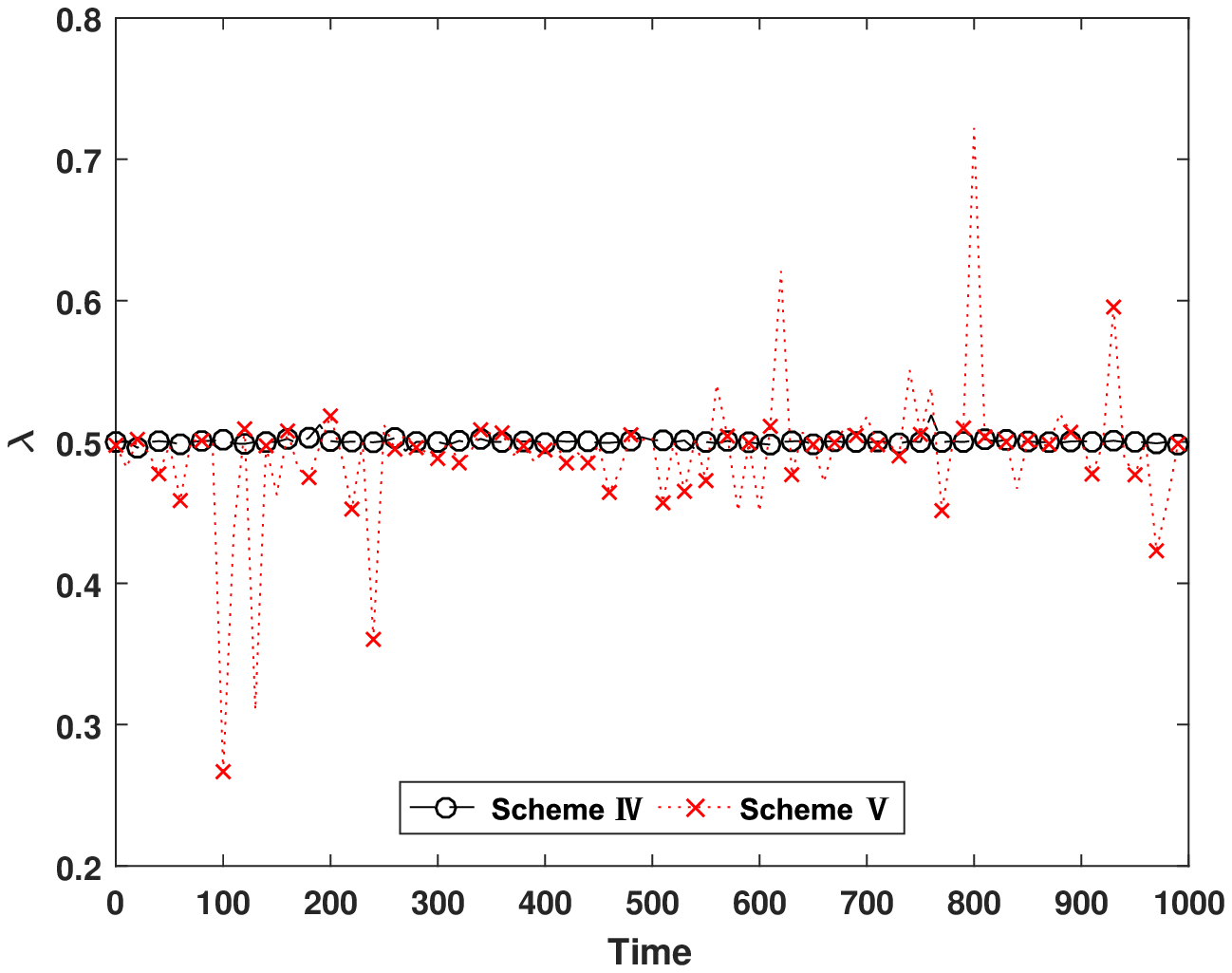}
\caption{Energy conservation (first column) of three energy-preserving schemes for the H\'{e}non-Heiles model; value distribution (second column) of the parameter in Schemes \Rmnum{4} and \Rmnum{5} with box orbits (first row) and chaotic orbits (second row).}\label{Fig-4}
\end{figure}

\subsection{The perturbed Kepler problem}\label{subS-5-2}
Our second experiment is the motion of a planet in the Schwarzschild potential for Einstein's general relativity theory. The Hamiltonian of the dynamics reads
\begin{equation}\label{eq-5-2}
H(p_1,p_2,q_1,q_2) = \frac{1}{2}(p_1^2+p_2^2)-\frac{1}{\sqrt{q_1^2+q_2^2}}-\frac{\mu}{3\sqrt{(q_1^2+q_2^2)^3}},
\end{equation}
where $\mu$ is a small number. In particular, when $\mu=0$, this system becomes the classical two-body problem. Moreover, the angular momentum $L(p_1,p_2,q_1,q_2) = q_1p_2-q_2p_1$ is a quadratic invariant. We choose the initial values \cite{hairer-06-geometric,luigi-12-EQUIP} as
\[p_1(0)=0,\ p_2(0)=\sqrt{\frac{1+e}{1-e}},\ q_1(0)=1-e,\ q_2(0)=0,\]
which confers an eccentricity $e$ on the orbit. Setting $e=0.6$ and $\mu=0.0075$, then $H(p,q)=H_0\approx-0.5391$, $L(p,q)=L_0=0.8$. Due to the complex form of \eqref{eq-5-2}, we do not consider Scheme \Rmnum{2} which contains the calculation of higher derivatives.

Schemes \Rmnum{1} and \Rmnum{3} plot satisfactory orbits similar to those of the original problem \cite{hairer-06-geometric}. Since the trajectories drawn by the two schemes are similar, we give the numerical solutions of Scheme \Rmnum{1} in Fig. \ref{Fig-5}. The main observation is that the energy error remains bounded and small in the long-time simulation, and the angular momentum is well preserved in Fig. \ref{Fig-6}. These benefit from the symplecticity of both schemes. 

The performance of three energy-preserving schemes is presented in Fig. \ref{Fig-7} and Fig. \ref{Fig-8}. All three schemes present the expected orbits. It is clear that the energy error grows linearly for the AVF method, and those of Schemes \Rmnum{4} and \Rmnum{5} are bounded over longer time intervals with smaller order of magnitude. The angular momentum is well conserved by Schemes \Rmnum{4} and \Rmnum{5} as explained in Remark \ref{Re-4-1}. These results show that the proposed EQUIP schemes are more advantageous than the AVF method, especially in the ability to preserve the Hamiltonian and the quadratic invariant. The values of $\lambda$ still fluctuate around $1/2$, which is consistent with the theoretical existence in Theorem \ref{Th-4-1}. As a part of this experiment, in Fig. \ref{Fig-9}, we demonstrate the behavior of three energy-preserving schemes for the two-body problem. The results once again confirm the theoretical analysis of this paper.

\begin{figure}[H]
\centering
\subfigure[$\lambda=-1$]{
\includegraphics[width=60mm]{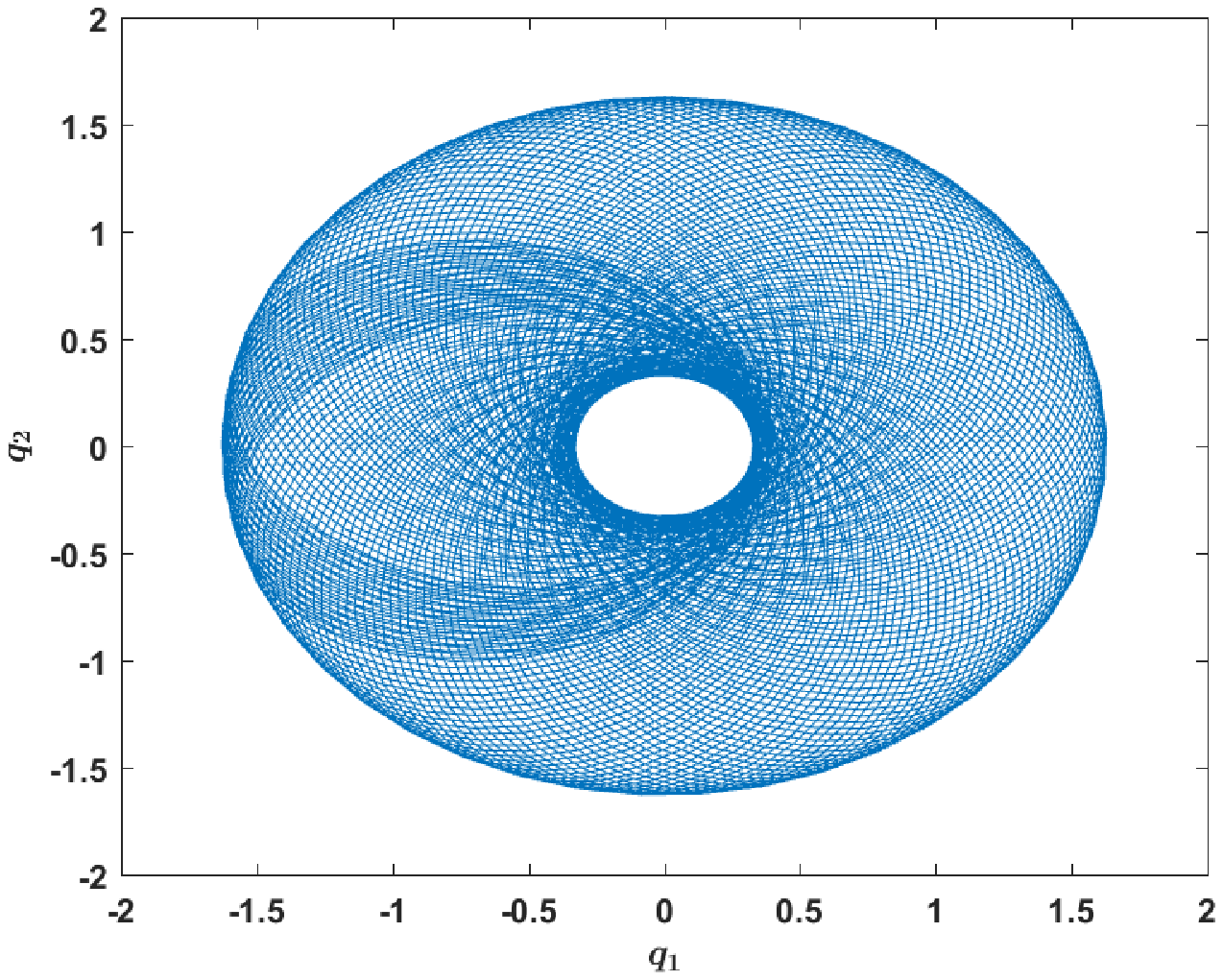}}\hspace{-4.5mm}
\subfigure[$\lambda=\frac{1}{3}$]{
\includegraphics[width=60mm]{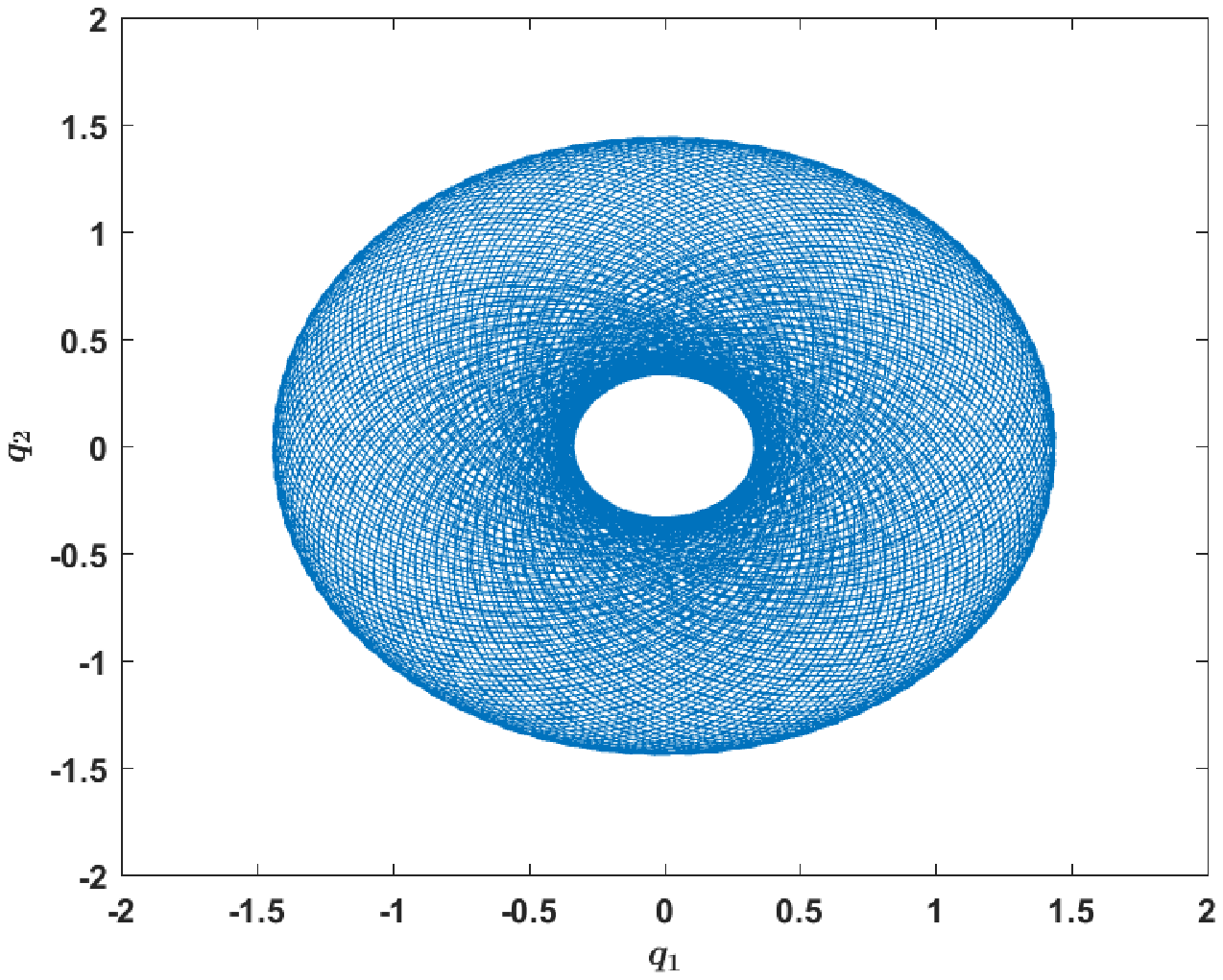}}\vspace{-2.5mm}
\subfigure[$\lambda=\frac{3}{2}$]{
\includegraphics[width=60mm]{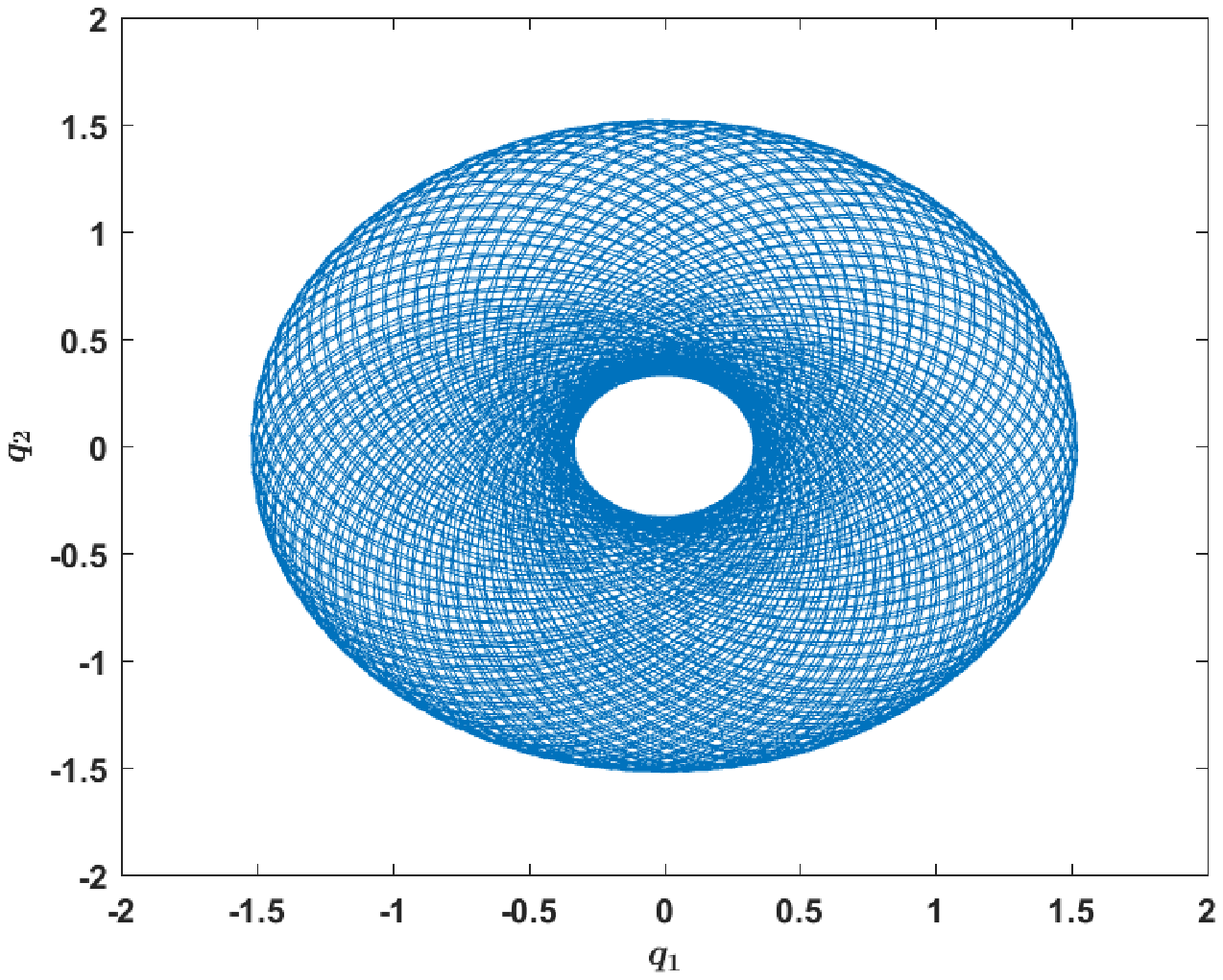}}\hspace{-4.5mm}
\subfigure[$\lambda=2$]{
\includegraphics[width=60mm]{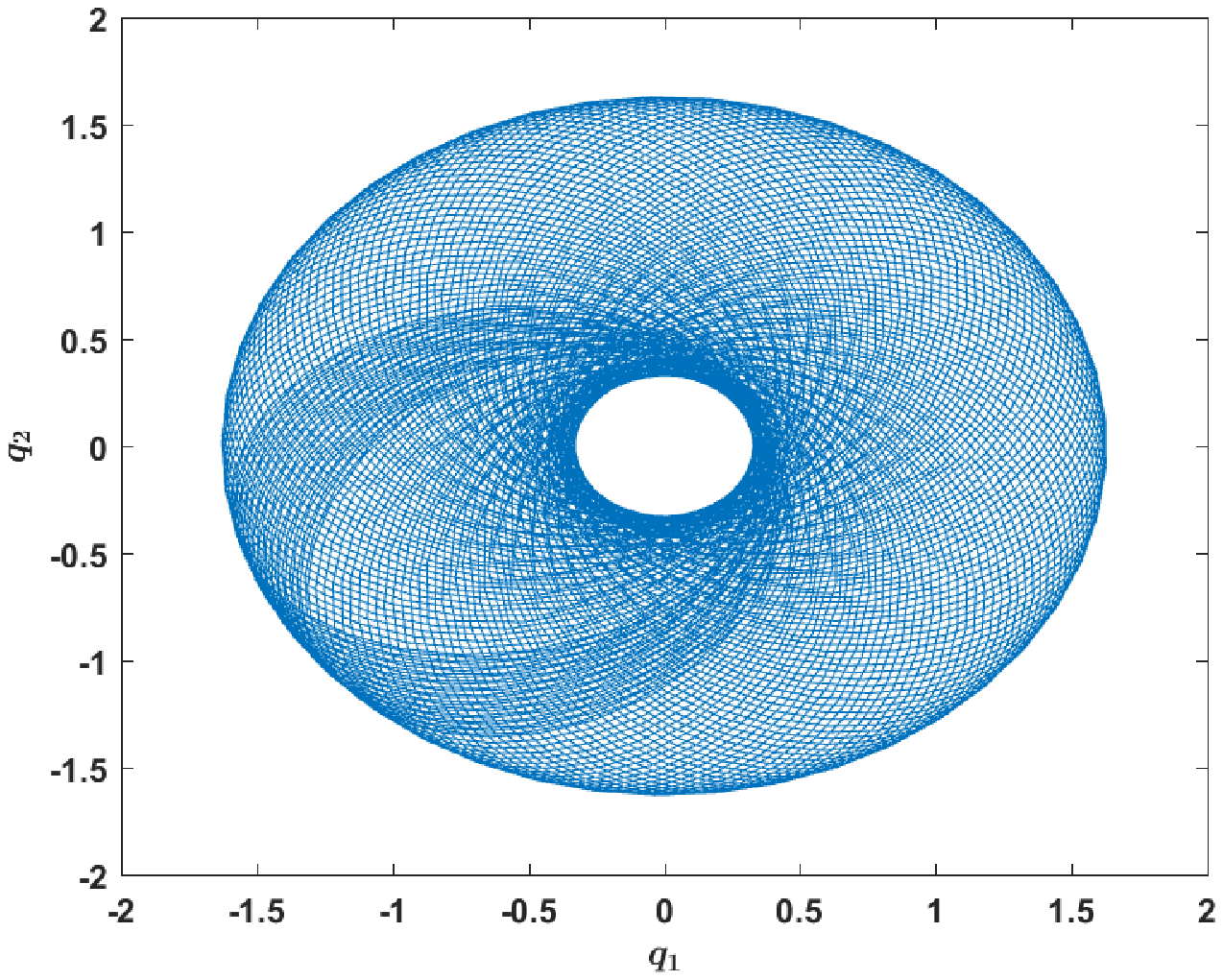}}
\caption{Numerical solutions of Scheme \Rmnum{1} for the perturbed Kepler problem till $t=1000$.}\label{Fig-5}
\end{figure}

\begin{figure}[H]
\centering
\includegraphics[width=60mm]{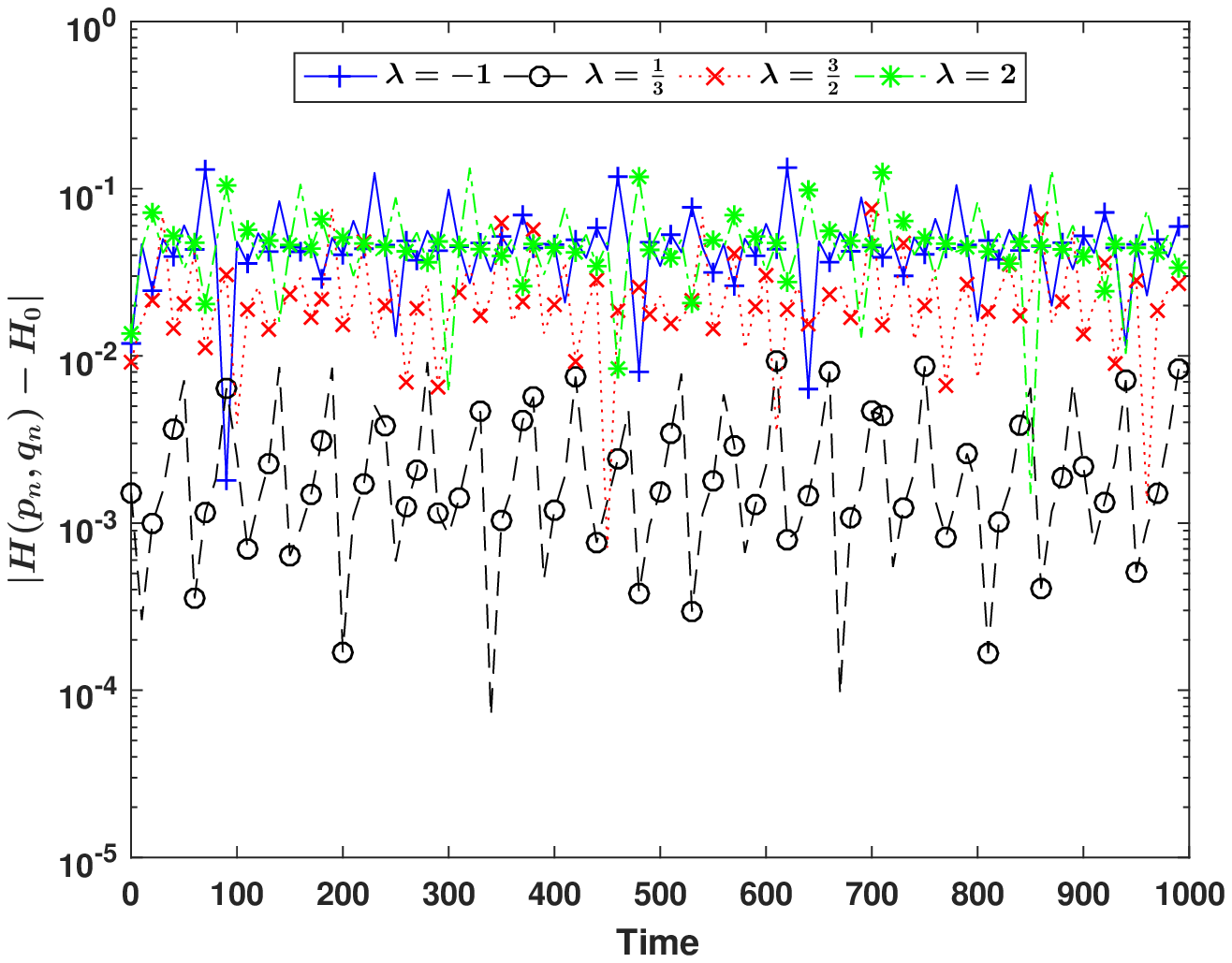}\hspace{-4.5mm}
\includegraphics[width=60mm]{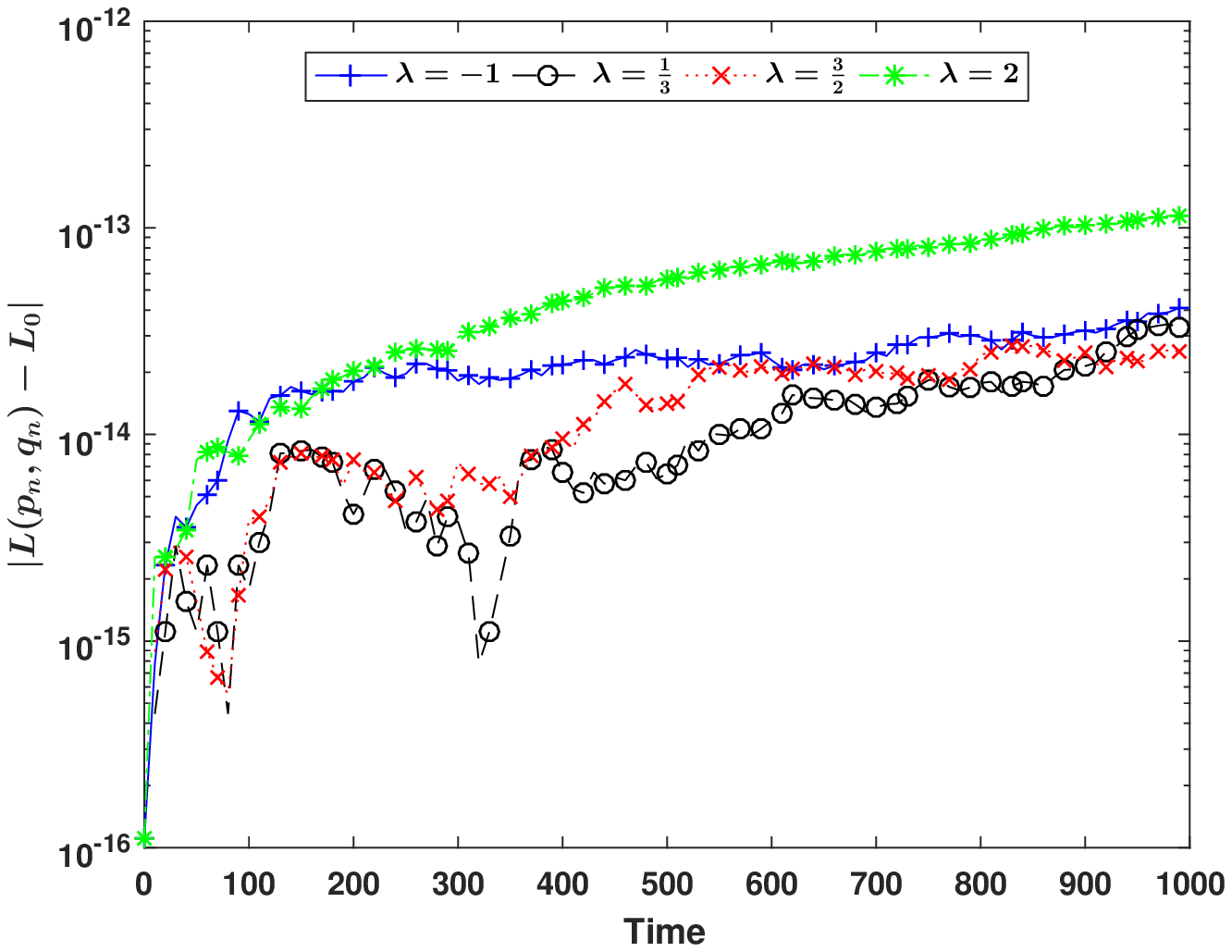}
\includegraphics[width=60mm]{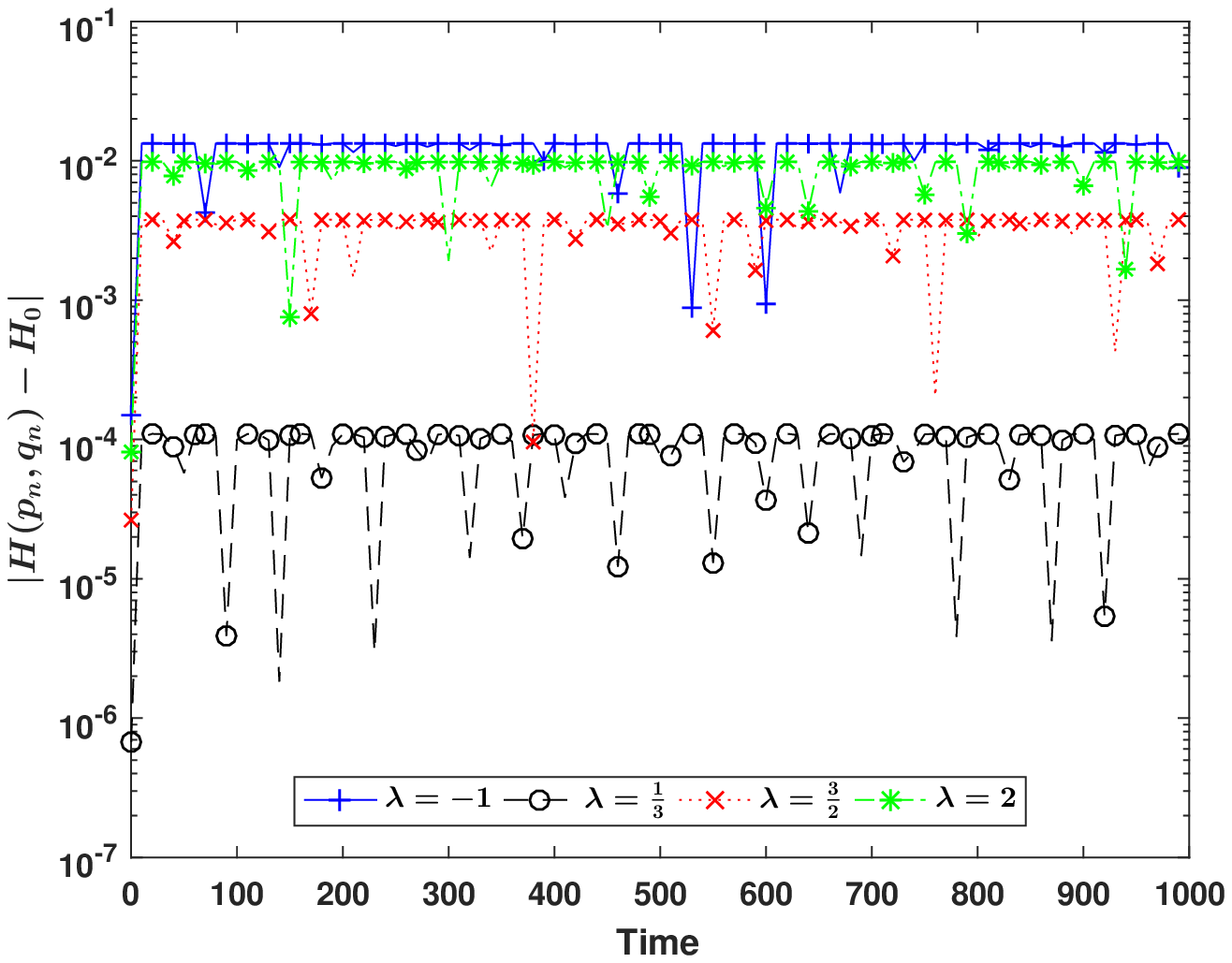}\hspace{-4.5mm}
\includegraphics[width=60mm]{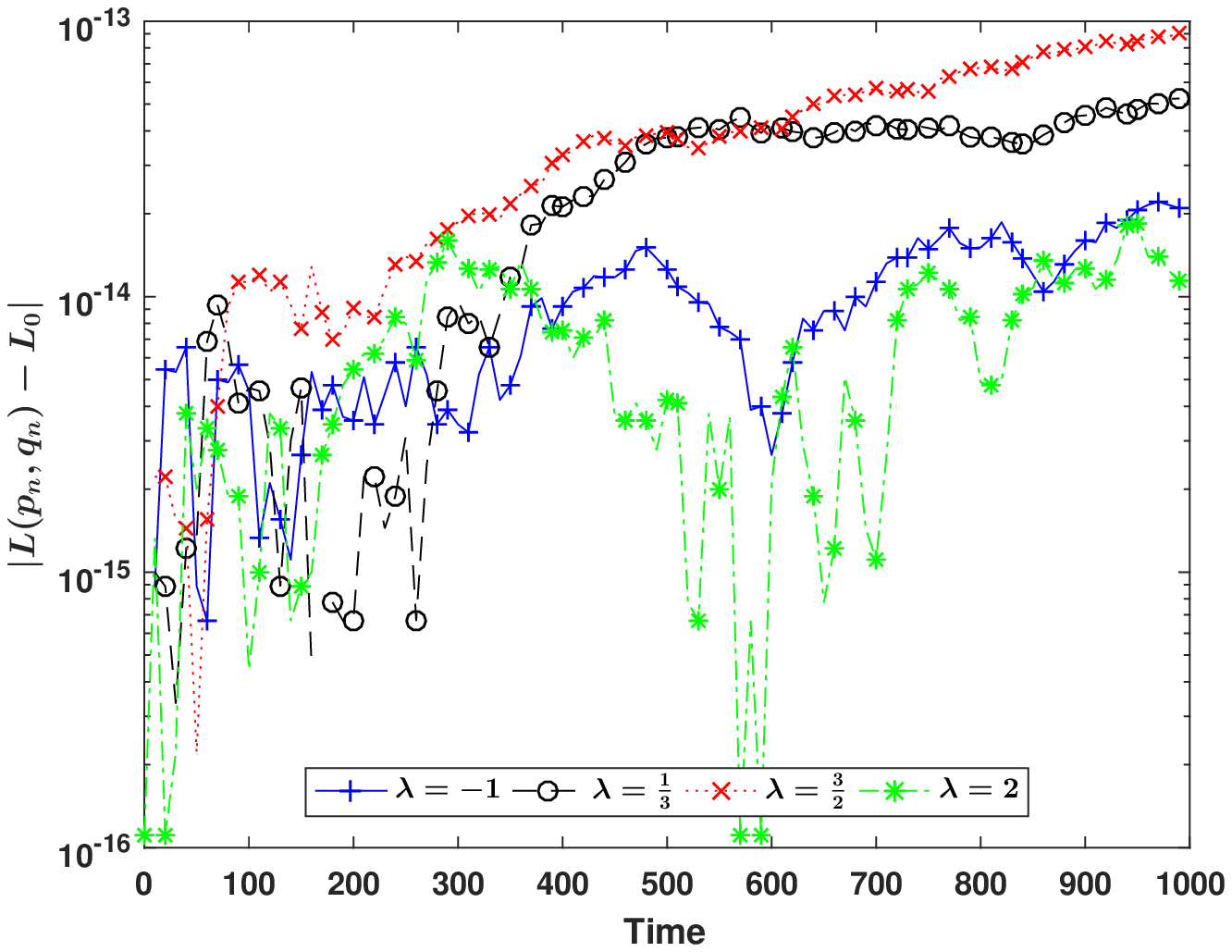}
\caption{Energy (left) and angular momentum (right) conservation of Schemes \Rmnum{1} (first row) and \Rmnum{3} (second row) for the perturbed Kepler problem.}\label{Fig-6}
\end{figure}

\begin{figure}[H]
\centering
\subfigure[AVF]{
\includegraphics[width=60mm]{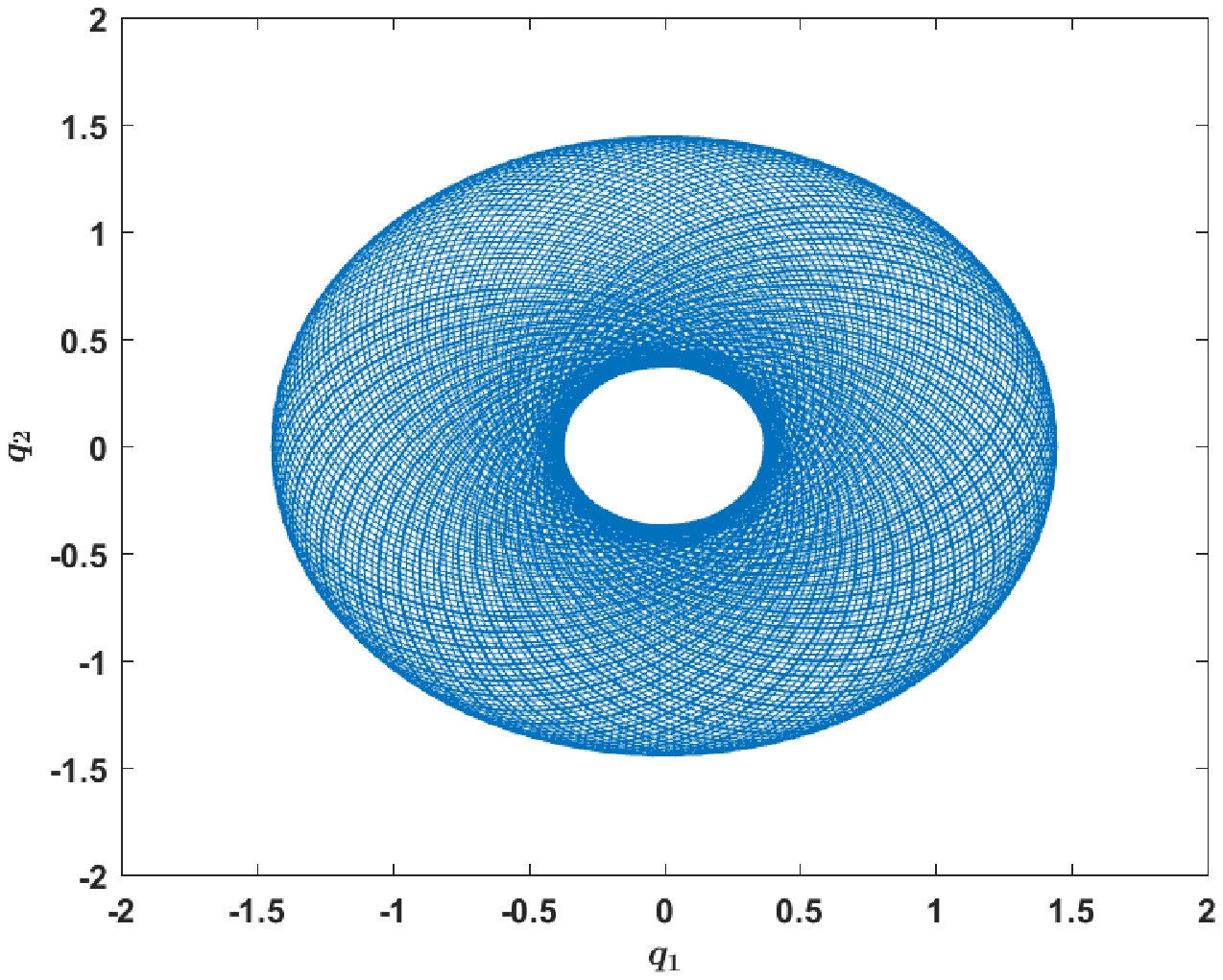}}\\
\subfigure[Scheme \Rmnum{4}]{
\includegraphics[width=60mm]{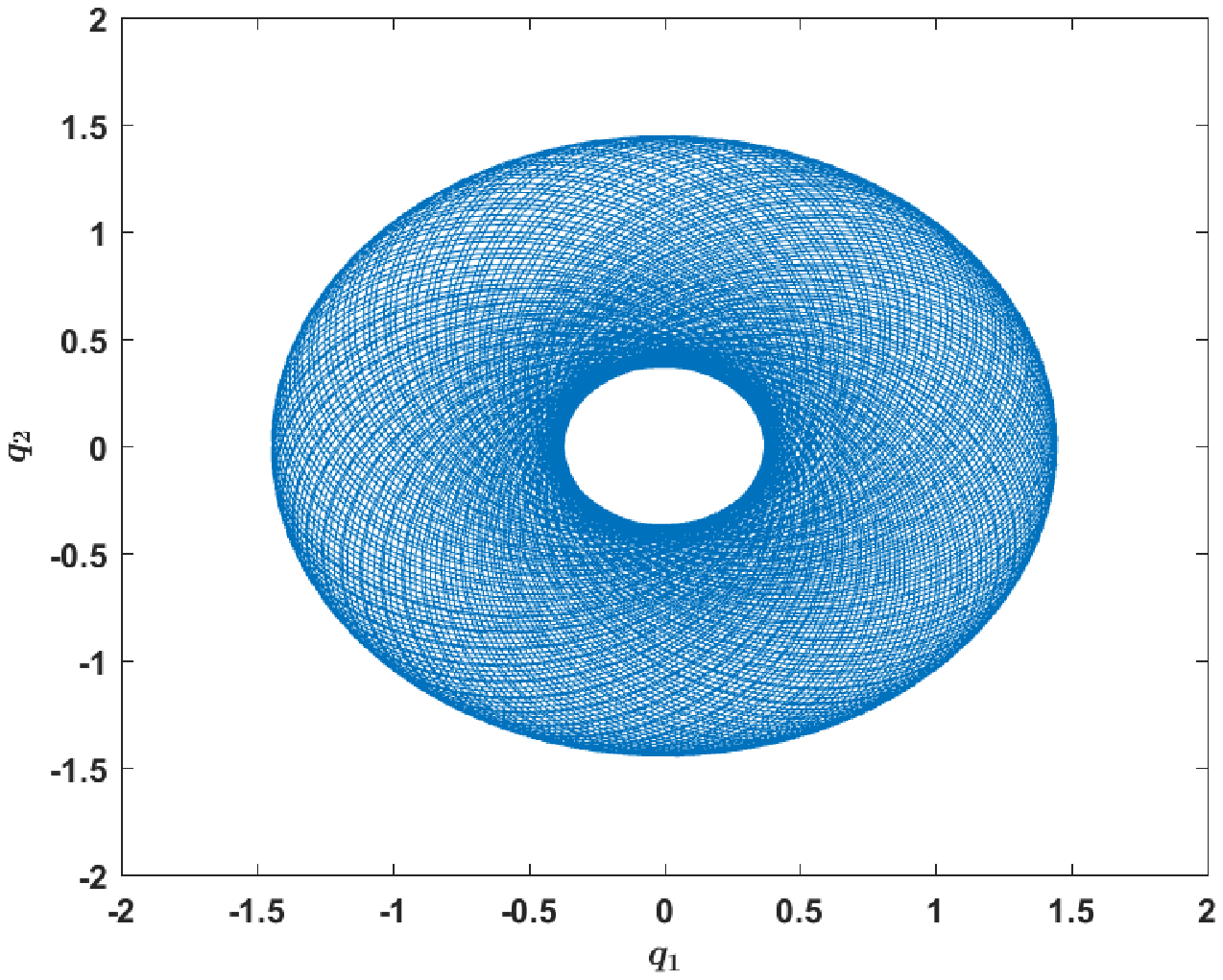}}\hspace{-4.5mm}
\subfigure[Scheme \Rmnum{5}]{
\includegraphics[width=60mm]{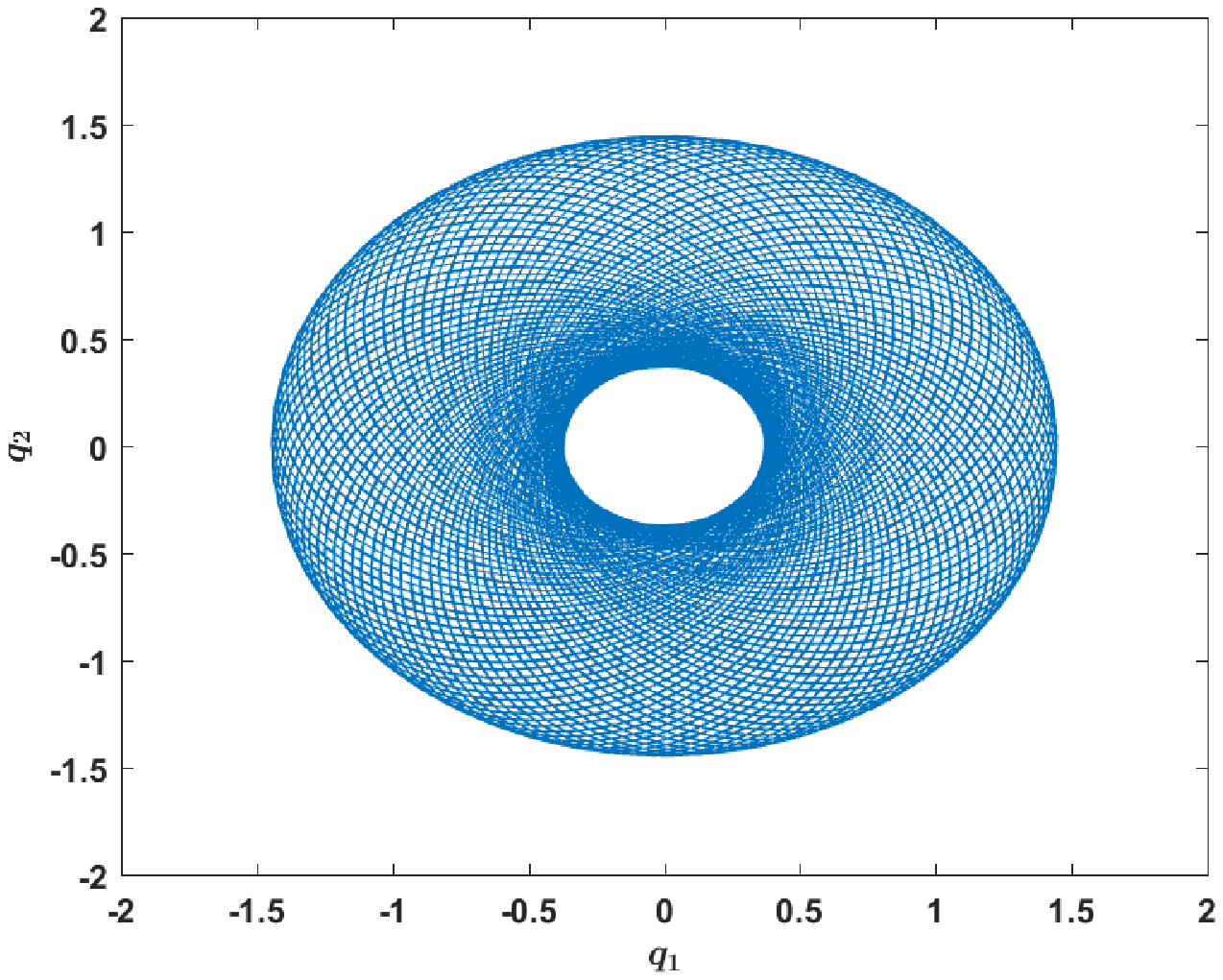}}
\caption{Numerical solutions of energy-preserving schemes for the perturbed Kepler problem till $t=1000$.}\label{Fig-7}
\end{figure}

\begin{figure}[H]
\centering
\includegraphics[width=60mm]{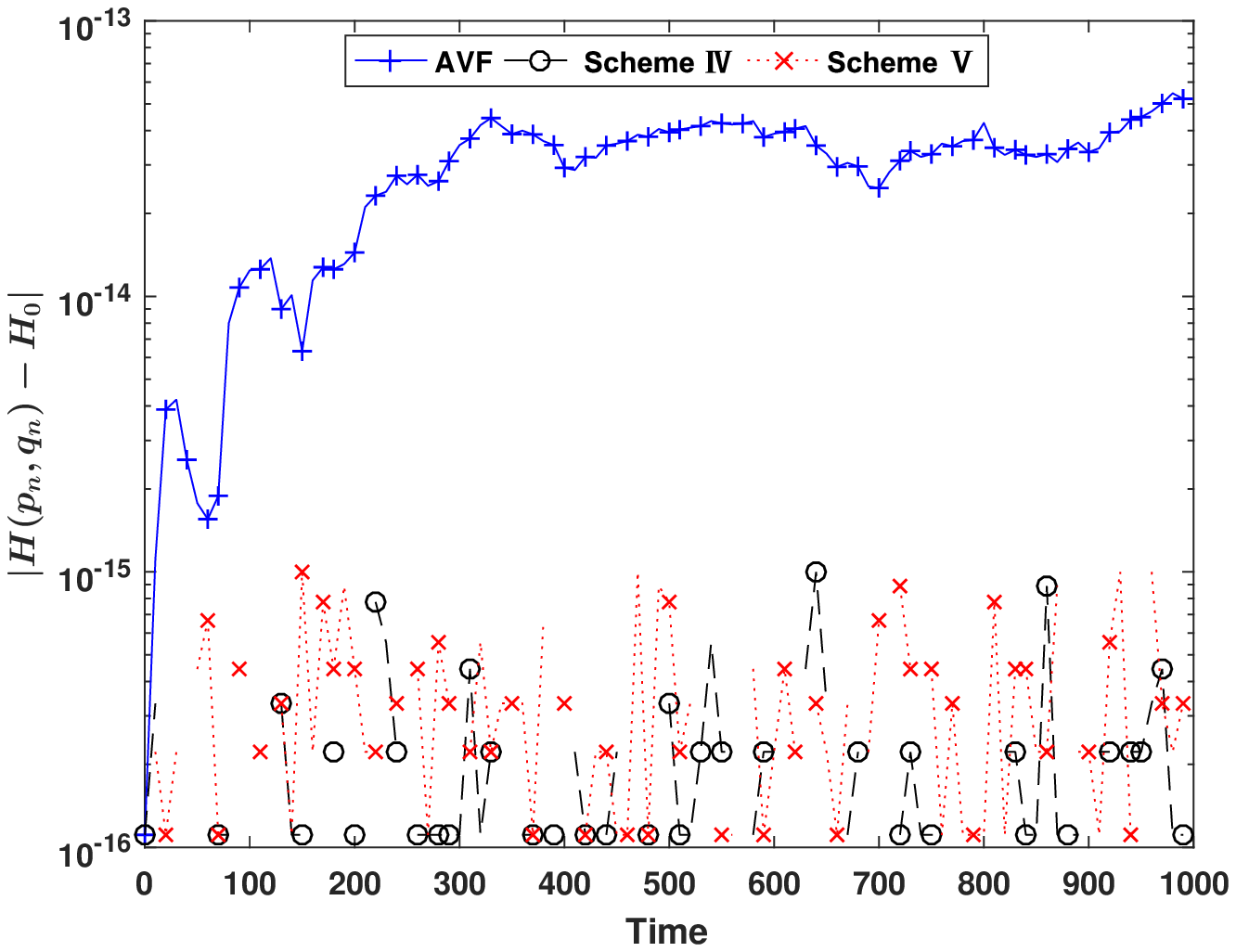}\hspace{-4.5mm}
\includegraphics[width=60mm]{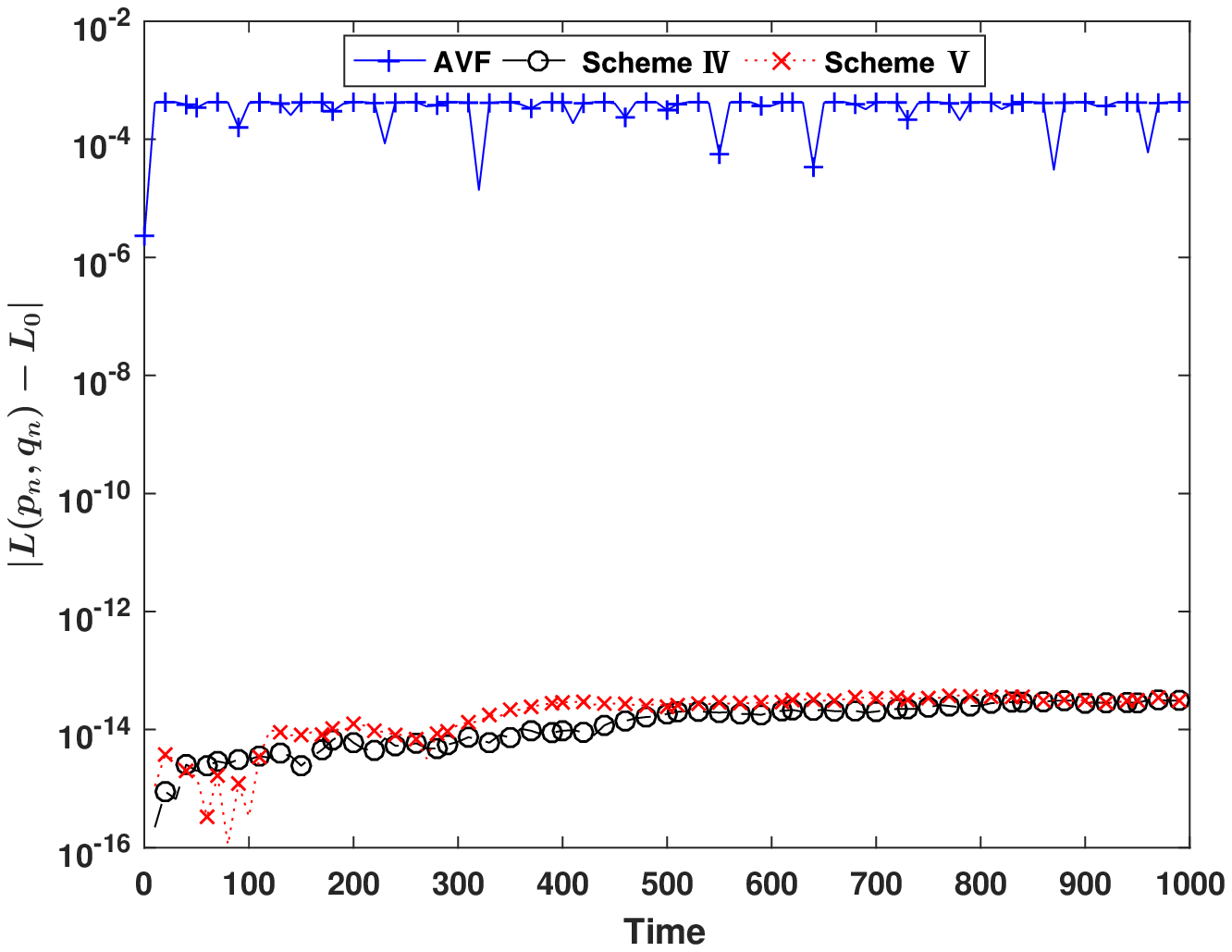}
\includegraphics[width=60mm]{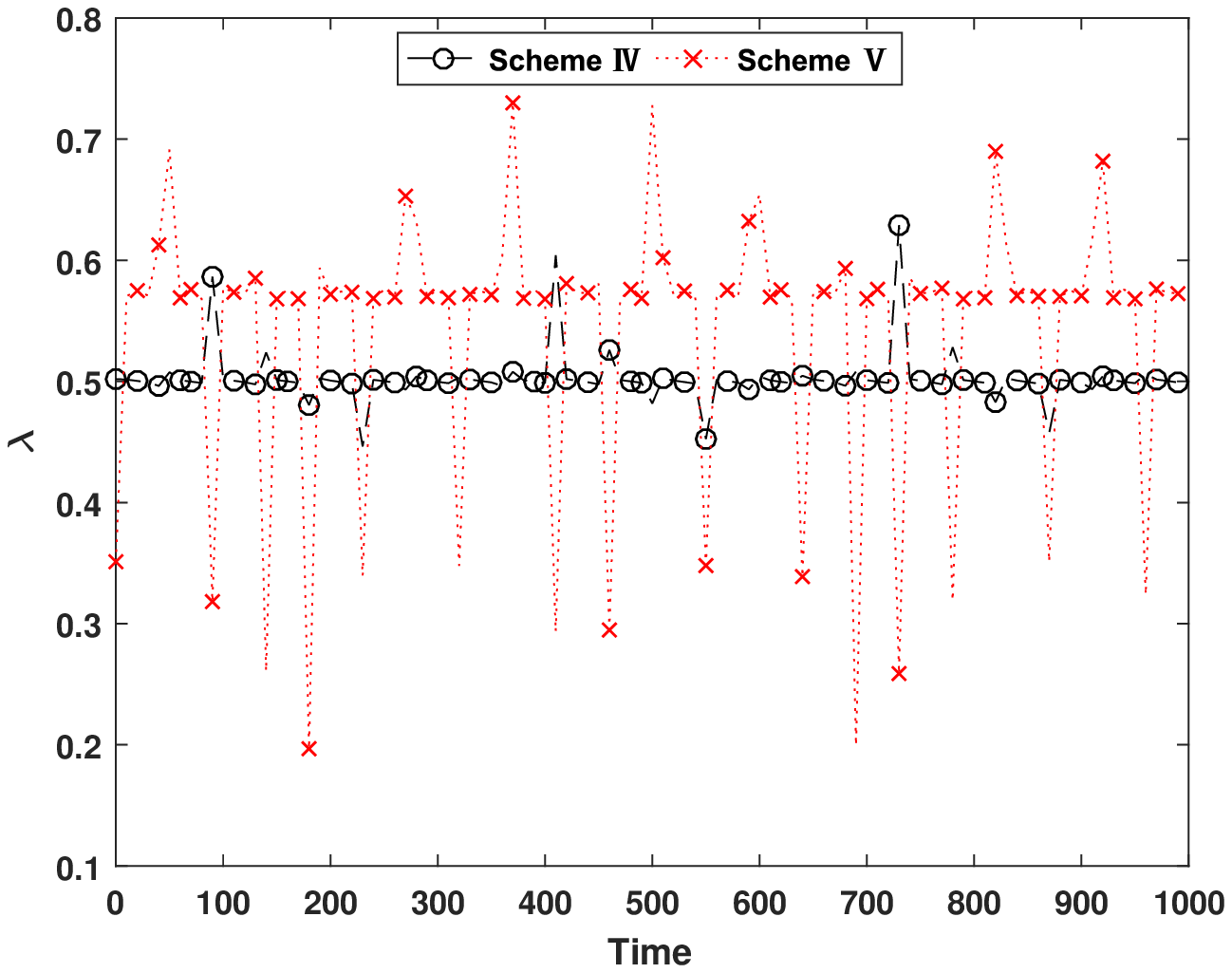}
\caption{Energy and angular momentum conservation (first row) of three energy-preserving schemes for the perturbed Kepler problem; value distribution (second row) of the parameter in Schemes \Rmnum{4} and \Rmnum{5}.}\label{Fig-8}
\end{figure}

\begin{figure}[H]
\centering
\includegraphics[width=60mm]{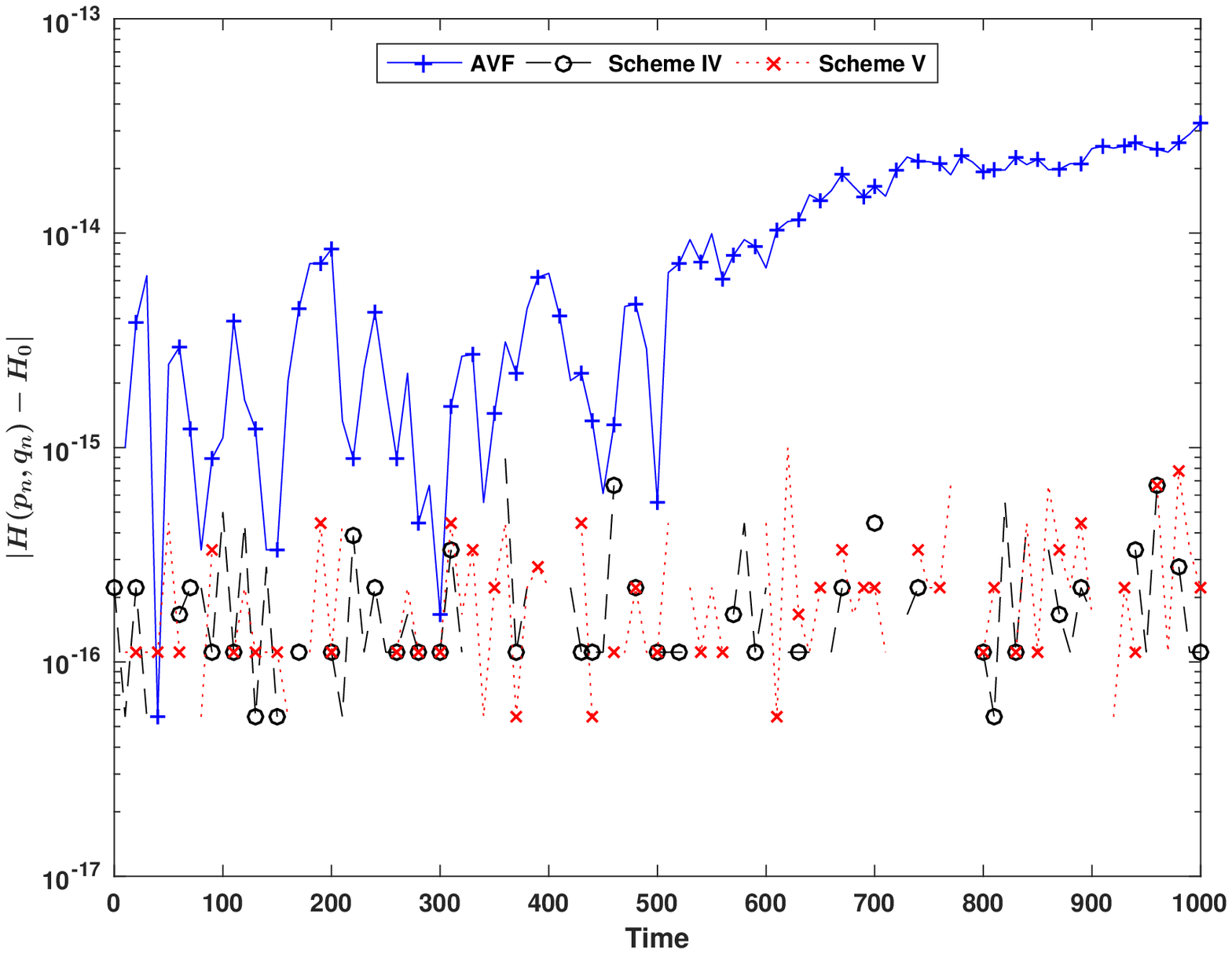}\hspace{-4.5mm}
\includegraphics[width=60mm]{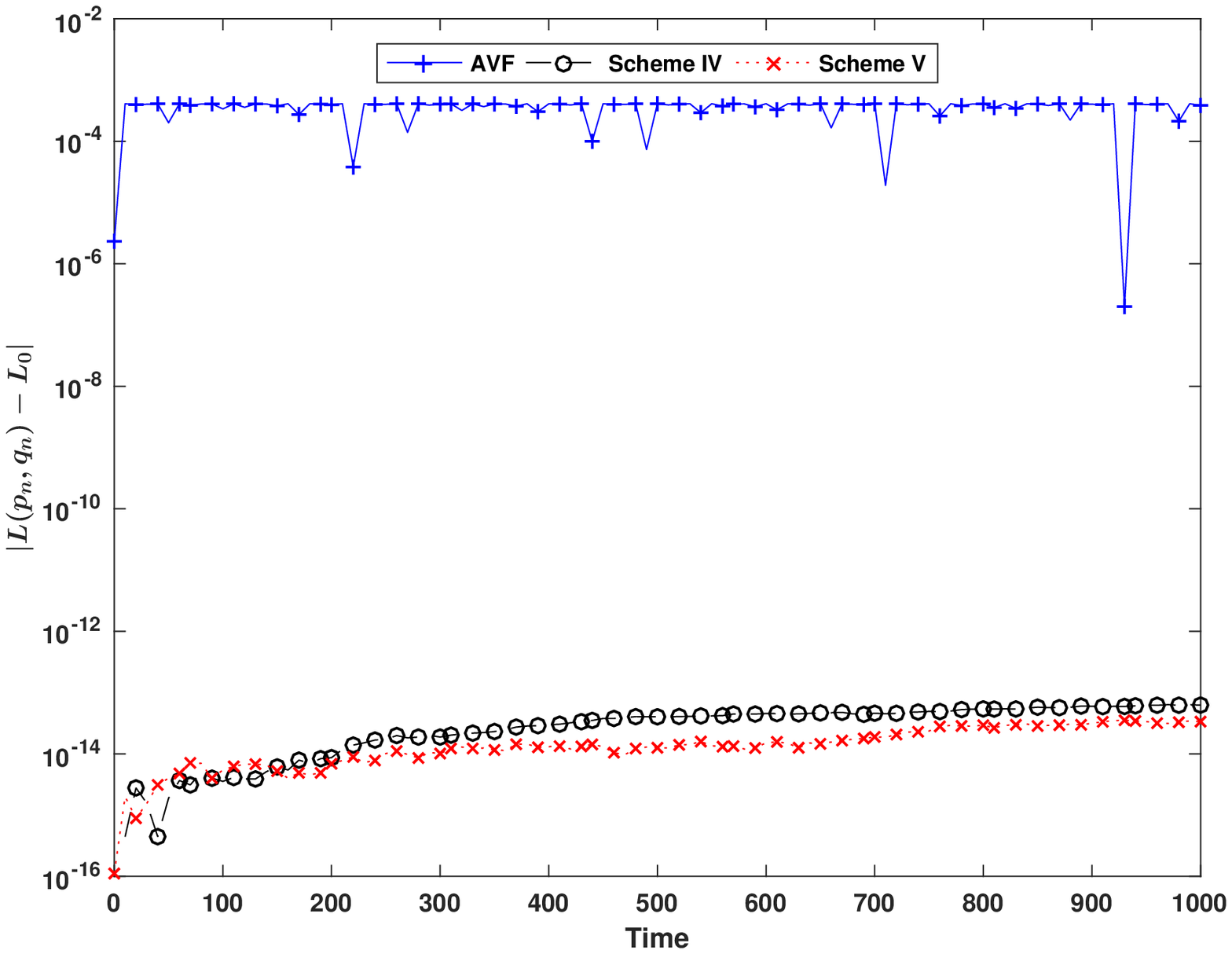}
\includegraphics[width=60mm]{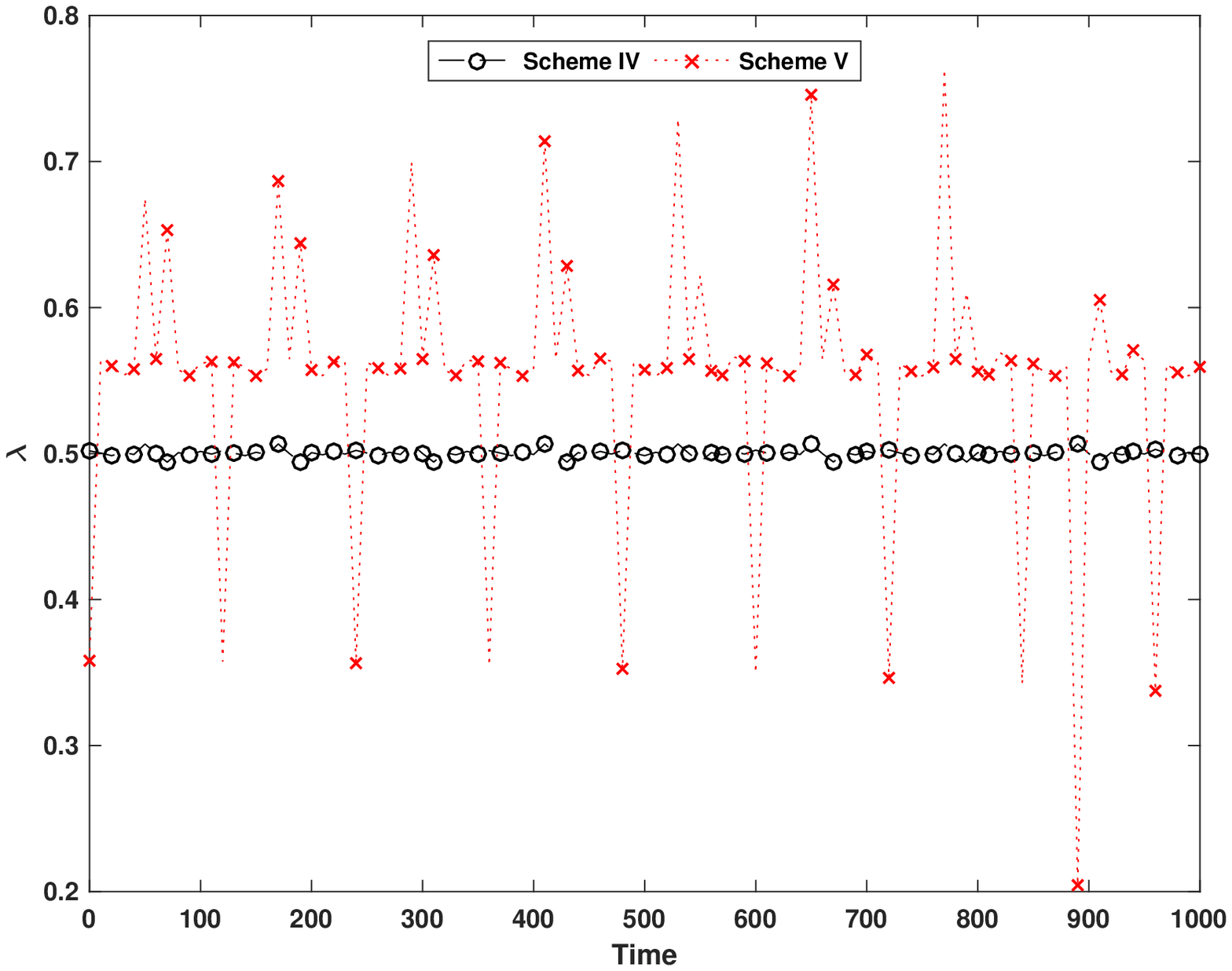}
\caption{Energy and angular momentum conservation (first row) of three energy-preserving schemes for the two-body problem; value distribution (second row) of the parameter in Schemes \Rmnum{4} and \Rmnum{5}.}\label{Fig-9}
\end{figure}

\section{Concluding remarks}\label{Sec-6}
We develop two new classes of arbitrary high-order structure-preserving methods for canonical Hamiltonian systems. The first class is two families of parameterized symplectic schemes. Moreover, this produces a simple symplectic scheme, namely Scheme \Rmnum{1}, which covers the symplectic Euler methods and the implicit midpoint rule. The second class is a novel family of arbitrary high-order energy and quadratic invariants preserving (EQUIP) schemes obtained by adjusting the free parameter of parameterized symplectic schemes. A more general form of generating functions is proposed, which generalizes the three classical generating functions that have been widely used. A rigorous proof shows that the parameter can be selected around $1/2$ to obtain the EQUIP methods. This also promotes an explanation why the implicit midpoint rule has excellent behavior in most situations. The long-time performances of all proposed schemes are demonstrated by numerical tests. 

The proposed schemes can be applied to the time discretization of Hamiltonian PDEs. The free parameter could be tuned to achieve the conservation of other non-quadratic invariants. The classes of symplectic integrators and variational integrators are identical. It is meaningful to discuss the parameterized symplectic schemes and their properties based on the discrete Hamilton's principle. We will further generalize these results to non-canonical Hamiltonian systems. 

\section*{Acknowledgements}
The authors wish to thank Professor Christian Lubich for his valuable suggestions by pointing out that the solved parameter in the EQUIP schemes depends on the initial points so that the obtained EQUIP schemes cannot completely inherit  the symplecticity. 

This work is supported by the National Key Research and Development Project of China (Grant No. 2018YFC1504205), the National Natural Science Foundation of China (Grant No. 11771213, 11971242), the Major Projects of Natural Sciences of University in Jiangsu Province of China (Grant No. 18KJA110003) and the Priority Academic Program Development of Jiangsu Higher Education Institutions.



\begin{thebibliography}{99}
\bibitem{betsch-00-time-FEM}
P.~Betsch and P.~Steinmann.
\newblock Inherently energy conserving time finite elements for classical mechanics.
\newblock {\em J. Comput. Phys.}, 160:88--116, 2000.

\bibitem{luigi-18-analysis-EQUIP}
L.~Brugnano, G.~Gurioli, and F.~Iavernaro. 
\newblock Analysis of energy and quadratic invariant preserving (EQUIP) methods.
\newblock {\em J. Comput. Appl. Math.}, 335:51--73, 2018.	
		
\bibitem{luigi-16-line}
L.~Brugnano and F.~Iavernaro.
\newblock {\em Line Integral Methods for Conservative Problems}.
\newblock Chapman \& Hall/CRC: Boca Raton, FL, USA, 2016.
	
\bibitem{luigi-12-EQUIP}
L.~Brugnano, F.~Iavernaro, and D.~Trigiante.
\newblock Energy- and quadratic invariants-preserving integrators based upon Gauss collocation formulae.
\newblock {\em SIAM J. Numer. Anal.}, 50:2897--2916, 2012.
	
\bibitem{luigi-10-HBVM}
L.~Brugnano, F.~Iavernaro, and D.~Trigiante.
\newblock Hamiltonian boundary value methods (energy preserving discrete line integral methods).
\newblock {\em J. Numer. Anal. Ind. Appl. Math}, 5:17--37, 2010.

\bibitem{cai-18-PAVF}
W.J. Cai, H.C. Li, and Y.S. Wang.
\newblock Partitioned averaged vector field methods.
\newblock {\em J. Comput. Phys.}, 370:25--42, 2018.	
	
\bibitem{chartier-06-algebraic}
P.~Chartier, E.~Faou, and A.~Murua.
\newblock An algebraic approach to invariant preserving integrators: the case of quadratic and Hamiltonian invariants.
\newblock {\em Numer. Math.}, 103:575--590, 2006.

\bibitem{chen-20-EQUIP-MS}
C.C. Chen, J.L. Hong, C.~Sim, and K.~Sonwu.
\newblock Energy and quadratic invariants preserving (EQUIP) multi-symplectic methods for Hamiltonian wave equations.
\newblock {\em J. Comput. Phys.}, 418:109599, 2020.

\bibitem{cooper-20-RK}
G.~Cooper.
\newblock Stability of Runge-Kutta methods for trajectory problems. 
\newblock {\em IMA J. Numer. Anal.}, 7:1--13, 1987.

\bibitem{vogelaere-56-contact}
R.~De Vogelaere.
\newblock Methods of integration which preserve the contact transformation property of Hamiltonian equations.
\newblock {\em Tech. Report No 4, Dept. Mathem., Univ. of Notre Dame, Notre Dame, Ind.}, 1956.
		
\bibitem{feng-85-difference}
K.~Feng.
\newblock On difference schemes and symplectic geometry.
\newblock {\em In Feng K, ed. Proc 1984 Beijing Symp Diff Geometry and Diff Equations. Science Press, Beijing}, 1985, 42--58.

\bibitem{feng-10-symplectic}
K.~Feng and M.Z. Qin.
\newblock {\em Symplectic Geometric Algorithms for Hamiltonian Systems}.
\newblock Springer-Verlag/Zhejiang Publishing United Group, Zhejiang Science and Technology Publishing House, Berlin/Hangzhou, 2010.

\bibitem{feng-89-generating}
K.~Feng, H.M. Wu, M.Z. Qin, and D.L. Wang.
\newblock Construction of canonical difference schemes for Hamiltonian formalism via generating functions.
\newblock {\em J. Comput. Math.}, 7:71--96, 1989.

\bibitem{ge-88-Lie-Poisson}
Z.~Ge and J.E. Marsden.
\newblock Lie-Poisson Hamilton-Jacobi theory and Lie-Poisson integrators.
\newblock {\em Phys. Lett. A}, 133:134--139, 1988.

\bibitem{hairer-06-geometric}
E.~Hairer, C.~Lubich, and G.~Wanner.
\newblock {\em Geometric Numerical Integration: Structure-Preserving Algorithms for Ordinary Differential Equations}.
\newblock Springer-Verlag, Berlin, 2nd edition, 2006.

\bibitem{hong-08-generating-MSRK}
J.L. Hong and Y.J. Sun.
\newblock Generating functions of multi-symplectic RK methods via DW Hamilton-Jacobi equations.
\newblock {\em Numer. Math.}, 110:491--519, 2008.

\bibitem{kane-99-variational}
C.~Kane, J.E. Marsden, and M.~Ortiz.
\newblock Symplectic-energy-momentum preserving variational integrators.
\newblock {\em J. Math. Phys.}, 40:3353--3371, 1999.

\bibitem{li-16-AVF}
H.C. Li, Y.S. Wang, and M.Z. Qin.
\newblock A sixth order averaged vector field method.
\newblock {\em J. Comput. Math.}, 34:479--498, 2016.

\bibitem{li-20-EQUIP-constraint}
L.~Li and D.L. Wang.
\newblock Energy and quadratic invariants preserving methods for Hamiltonian systems with holonomic constraints.
\newblock arXiv preprint arXiv:2007.06338, 2020.

\bibitem{marsden-98-multisymplectic}
J.E. Marsden, G.W. Patrick, and S.~Shkoller.
\newblock Multisymplectic geometry, variational integrators, and nonlinear PDEs.
\newblock {\em Commun. Math. Phys.}, 199:351--395, 1998.

\bibitem{mclachlan-99-discrete-gradient}
R.I. McLachlan, G.R.W. Quispel, and N.~Robidoux.
\newblock Geometric integration using discrete gradients.
\newblock {\em Philos. Trans. R. Soc. Lond. Ser. A-Math. Phys. Eng. Sci.},
357:1021--1045, 1999.	

\bibitem{mclachlan-11-stability-PRK}
R.I. McLachlan, Y.J. Sun, and P.S.P Tse.
\newblock Linear stability of partitioned Runge-Kutta methods.
\newblock {\em SIAM J. Numer. Anal.}, 49:232--263, 2011.	

\bibitem{menyuk-84-Hamiltonian}
C.R. Menyuk.
\newblock Some properties of the discrete Hamiltonian method.
\newblock {\em Physica D}, 11:109--129, 1984.

\bibitem{miniati-15-galaxies}
F.~Miniati and A.~Beresnyak.
\newblock Self-similar energetics in large clusters of galaxies.
\newblock {\em Nature}, 523:59--62, 2015.

\bibitem{qin-92-composition}
M.Z. Qin and W.J. Zhu.
\newblock Construction of higher order symplectic schemes by composition.
\newblock {\em Computing}, 47:309--321, 1992.

\bibitem{quispel-08-AVF}
G.R.W. Quispel and D.I. McLaren.
\newblock A new class of energy-preserving numerical integration methods.
\newblock {\em J. Phys. A-Math. Theor.}, 41:045206, 2008.

\bibitem{ruth-83-canonical}
R.D. Ruth.
\newblock A canonical integration technique.
\newblock {\em IEEE Trans. Nucl. Sci.}, 30:2669--2671, 1983.

\bibitem{sanzserna-88-RK}
J.M. Sanz-Serna.
\newblock Runge-Kutta schemes for Hamiltonian systems.
\newblock {\em BIT}, 28:877--883, 1988.	

\bibitem{sanzserna-94-Hamiltonian}
J.M. Sanz-Serna and M.P. Calvo.
\newblock {\em Numerical Hamiltonian problems}.
\newblock Chapman \& Hall, London, 1994.

\bibitem{shang-94-volume}
Z.J. Shang.
\newblock Generating functions for volume-preserving mappings and Hamilton-Jacobi equations for source-free dynamical Systems.
\newblock {\em Sci. China Ser. A}, 37:1172--1188, 1994.

\bibitem{sun-93-SPRK}
G.~Sun.
\newblock Symplectic partitioned Runge-Kutta methods.
\newblock {\em J. Comput. Math.}, 11:365--372, 1993.

\bibitem{suris-90-RK-variational}
Y.B. Suris.
\newblock Hamiltonian methods of Runge-Kutta type and their variational interpretation.
\newblock {\em Matem. Mod.}, 2:78--87, 1990.

\bibitem{suzuki-90-decomposition}
M.~Suzuki.
\newblock Fractal decomposition of exponential operators with applications to many-body theories and Monte Carlo simulations.
\newblock {\em Phys. Lett. A}, 146:319--323, 1990.

\bibitem{tang-12-time-FEM}
W.S. Tang and Y.J. Sun.
\newblock Time finite element methods: a unified framework for numerical discretizations of ODEs.
\newblock {\em Appl. Math. Comput.}, 219:2158--2179, 2012.
	
\bibitem{wang-13-parametric-PRK}
D.L. Wang, A.G. Xiao, and X.Y. Li.
\newblock Parametric symplectic partitioned Runge-Kutta methods with energy-preserving properties for Hamiltonian system.
\newblock {\em Comput. Phys. Commun.}, 184: 303--310, 2013.

\bibitem{wang-14-stochastic}
L.J. Wang and J.L. Hong.
\newblock Generating functions for stochastic symplectic methods.
\newblock {\em Discrete Contin. Dyn. Syst.}, 34:1211--1228, 2014.

\bibitem{wu-13-oscillatory}
X.Y. Wu, B.~Wang, and W.~Shi.
\newblock Efficient energy-preserving integrators for oscillatory Hamiltonian systems.
\newblock {\em J. Comput. Phys.}, 235:587--605, 2013.

\bibitem{zotos-15-HH}
E.E. Zotos.
\newblock Classifying orbits in the classical H{\'e}non--Heiles Hamiltonian system.
\newblock {\em Nonlinear Dyn.}, 79:1665--1677, 2015.
\end{thebibliography}
\end{document}